\newtheorem{theorem}{Theorem}
\newtheorem{corollary}{Corollary}
\newtheorem{lemma}{Lemma}
\newtheorem{definition}{Definition}
\newtheorem{proposition}{Proposition}
\theoremstyle{remark}
\newtheorem{remark}{Remark}
\newtheorem{example}{Example}
\newcommand{\Z}{{\mathbb{Z}}}
\newcommand{\R}{{\mathbb{R}}}
\newcommand{\N}{{\mathbb{N}}}
\DeclareMathOperator{\rank}{rank}
\newcommand{\inv}{{^{-1}}}
\newcommand{\half}{\frac{1}{2}}
\newcommand{\quarter}{\frac{1}{4}}
\newcommand{\ts}{\mathrm{T^S}}
\newcommand{\Ra}{\Rightarrow}
\newcommand{\LRa}{\Leftrightarrow}
\title{On the Alexandrov Topology of sub-Lorentzian Manifolds}
\author[Irina Markina, Stephan Wojtowytsch]{Irina Markina \\ Stephan Wojtowytsch}
\address{Department of Mathematics, University of Bergen, Norway.}
\email{irina.markina@math.uib.no}
\address{Fakultaet fuer Mathematik und Informatik, Universitaet Heidelberg, Germany}
\email{wojtowytsch@stud.uni-heidelberg.de}
\thanks{The work of both authors are partially supported by NFR-FRINAT grants \#204726/V30 and \#213440/BG}
\subjclass[2000]{53C70, 53C07, 51H25}
\keywords{Sub-Riemannian and sub-Lorentzian geometry, Ball-Box theorem, Alexandrov topology, metric topology, reachable sets, causality.}
\begin{document}
\maketitle

\begin{abstract} 
In the present work we show that in contrast to sub-Riemannian geometry, in sub-Lorentzian geometry the manifold topology, the topology generated by an analogue of the Riemannian distance function and the Alexandrov topology based on causal relations, are not equivalent in general and may possess a variety of relations. We also show that `opened causal relations' are more well-behaved in sub-Lorentzian settings.
\end{abstract}

\section{Introduction}
Recall that a SemiRiemannian (or PseudoRiemannian) manifold is a $C^{\infty}$-smooth manifold $M$ equipped with a non-degenerate symmetric tensor $g$. The tensor defines a scalar product on the tangent space at each point. The quadratic form corresponding to the scalar product can have different numbers of negative eigenvalues. If the quadratic form is positively definite everywhere, the manifold is usually called Riemannian. The special case of one negative eigenvalue received the name the Lorentzian manifold.

Let us assume that a smooth subbundle $D$ of the tangent bundle $TM$ is given, on which we shall later impose certain non-integrability conditions. Suppose also that a non-degenerate symmetric tensor $g$ defines a scalar product $g_D$ on planes $D_q\subset T_qM$. Then the triplet $(M,D,g_D)$ is called a sub-SemiRiemannian manifold. If $g_D$ is positive definite everywhere, then  $(M,D,g_D)$ is called a sub-Riemannian manifold. Those manifolds are an active area of research, see, for instance~\cite{AgrSach,CDPT,LS,M,Str}. In the case of exactly one negative eigenvalue the manifold is called sub-Lorentzian. This setting has been considered in~\cite{CMV, G,G1,G2,GV,KM1,KM2,KM3}.

Sub-SemiRiemannian manifolds are an abstract setting for mechanical systems with non-holonomic constraints, linear and affine control systems, the motion of particles in magnetic fields, Cauchy-Riemann geometry and other subjects from pure and applied mathematics. 

The main goal of the present work is to study the relations between the given manifold topolog and the Alexandrov and time-separation topologies defined by causality properties in the presence of a sub-Lorentzian metric. The main aim is to compare these topologies for Lorentzian and sub-Lorentzian manifolds. 

Recall that on Riemannian manifolds the original topology of the manifold and metric topology defined by the Riemannian distance function are equivalent.  Due to the Ball-Box theorem we observe that in sub-Riemannian geometry the metric topology induced by the sub-Riemannian distance function and the original  manifold topology are equivalent, too. 
\begin{theorem}[Ball-Box Theorem]
Let $(M,D,g_D)$ be a sub-Riemannian manifold. Then for every point $p\in M$ there exist coordinates $(U,x)$ around $p$ and constants $c,C>0$ such that the sub-Riemannian distance function $d_{sR}$ defined by
$$
\begin{array}{llll}
&d_{sR}(p,q)=\inf\Big\{&\text{length of absolutely continuous curves}\ \ \gamma\colon [0,1]\to M,\ \gamma(0)=p,
\\
& &\ \ \gamma(1)=q,\ \ \dot\gamma(t)\in D_{\gamma(t)}\ \ \text{for almost all}\  \  t\Big\},
\end{array}
$$
can be estimated by $
c\:\sum_{i=1}^n|x^i|^\frac{1}{w_i}\leq d_{sR}(p,q) \leq C\:\sum_{i=1}^n|x^i|^\frac{1}{w_i},\quad x\in U$, 
where the constants $w_i\in\N$ are determined by the non-integrability properties of $D$ at the point $p$. 
\end{theorem}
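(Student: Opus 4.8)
The plan is to reduce the statement to an estimate in carefully chosen local coordinates adapted to the bracket-generating structure of $D$. \textbf{Setup.} Choose a neighbourhood of $p$ on which $D$ admits a $g_D$-orthonormal frame $X_1,\dots,X_k$. Since in the sub-Riemannian setting $D$ is bracket-generating, the flag defined by $D^1=\spn\{X_1,\dots,X_k\}$ and $D^{s+1}=D^s+[D^1,D^s]$ satisfies $D^r_p=T_pM$ for some $r$. Put $n_s=\dim D^s_p$ and $n_0=0$; the weights are then $w_i=s$ whenever $n_{s-1}<i\le n_s$, so they record exactly the depth in the flag at which the $i$-th direction is generated --- this is the meaning of ``determined by the non-integrability properties of $D$ at $p$''. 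Pick iterated Lie brackets $Y_1,\dots,Y_n$ of the frame fields with $Y_j$ of bracket-length $w_j$ and $Y_1(p),\dots,Y_n(p)$ a basis of $T_pM$, and let $(U,x)$ be privileged coordinates at $p$: coordinates centred at $p$ in which each $x^j$ has weighted order exactly $w_j$, i.e. $x^j$ and all derivatives $X_{i_1}\cdots X_{i_m}x^j$ with $m<w_j$ vanish at $p$. Producing such coordinates (e.g. by first straightening the $Y_j$ through $(t_1,\dots,t_n)\mapsto\exp(t_nY_n)\circ\cdots\circ\exp(t_1Y_1)(p)$ and then correcting higher-order terms) is standard but is the technical backbone of the proof.

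\textbf{Upper bound.} For each $j$ I would build an explicit horizontal curve from $p$ whose $x^j$-coordinate attains a prescribed value $a$ and whose length is $O(|a|^{1/w_j})$. If $w_j=1$ and $Y_j=X_i$, the flow of $X_i$ for time $|a|$ works. If $w_j=s\ge 2$, write $Y_j$ as an $s$-fold bracket and use the commutator trick: the concatenation $\exp(-tX_b)\exp(-tX_a)\exp(tX_b)\exp(tX_a)$ displaces the base point by $t^2[X_a,X_b]+O(t^3)$ with total length $4t$; iterating, an $s$-fold bracket direction is reached, up to an error of higher weighted order, by a horizontal curve of length $O(t)$ producing displacement $O(t^s)$, so one takes $t\sim|a|^{1/s}$. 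Concatenating these $n$ curves and applying the triangle inequality gives $d_{sR}(p,q)\le C\sum_j|x^j(q)|^{1/w_j}$; the only subtlety is that the curve constructed for index $j$ perturbs the other coordinates, but only by amounts of strictly higher weighted order, which get absorbed into $C$.

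\textbf{Lower bound --- the main obstacle.} Here lies the real difficulty. The key estimate is that every horizontal curve $\gamma\colon[0,T]\to U$ with $\gamma(0)=p$, parametrised by arclength, satisfies $|x^j(\gamma(T))|\le C\,T^{w_j}$ uniformly near $p$. To get this, differentiate $x^j\circ\gamma$: with $\dot\gamma=\sum_i u_i X_i$ and $\sum_i u_i^2\le 1$ one has $\frac{d}{dt}x^j(\gamma(t))=\sum_{i=1}^k u_i(t)\,(X_ix^j)(\gamma(t))$. Because $x^j$ is privileged of weighted order $w_j$, the coefficient $X_ix^j$ has weighted order $\ge w_j-1$, hence is bounded near $p$ by $C\sum_{w_m\le w_j-1}|x^m|^{(w_j-1)/w_m}$ plus higher-order terms; an induction on the weight (the coordinates of weight $<w_j$ are already controlled by $C\,t^{w_j-1}$ at time $t$) followed by integration yields the claim, so $|x^j(\gamma(T))|^{1/w_j}\le C'T$. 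Summing over $j$ and taking the infimum over horizontal curves joining $p$ to $q$ produces $c\sum_j|x^j(q)|^{1/w_j}\le d_{sR}(p,q)$. The delicate points --- an intrinsic notion of weighted order, the existence of privileged coordinates with the stated vanishing, the uniformity of the growth estimate on a fixed neighbourhood, and the nonlinear coupling between coordinates of different weights in both bounds --- are exactly where the non-integrability data of $D$ at $p$ must be used, and they form the core of the argument.
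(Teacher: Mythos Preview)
The paper does not prove this theorem at all: it is stated in the introduction purely as background from sub-Riemannian geometry, with no proof or sketch given (the surrounding discussion only uses it to motivate the comparison of topologies). So there is no ``paper's own proof'' against which to compare your proposal.

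That said, your outline is the standard route (privileged coordinates, commutator concatenations for the upper bound, and the weighted-order growth estimate for the lower bound) and is essentially correct as a sketch. The points you flag as delicate --- existence of privileged coordinates, the inductive control of coordinates by weight, and the absorption of higher-order cross terms --- are indeed where the real work lies; a full proof would need to spell out the notion of nonholonomic/weighted order and the uniformity of the constants on a fixed neighbourhood, but the strategy you describe is the one found in the references the paper cites (e.g.\ Montgomery~\cite{M}).
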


In the Lorentzian and sub-Lorentzian cases we cannot obtain a metric distance function from a given indefinite scalar product. The closest analogue is the time separation function,  which behaves quite differently from metric distances in a number of aspects.

The other specific feature of Lorentzian manifolds is a causal structure. In a sense, causality theory is the natural replacement for the metric geometry of Riemannian manifolds in the Lorentzian case. From causal relations one obtains a new topology, called the {\it Alexandrov topology}. It is known that in Lorentzian manifolds the Alexandrov topology can be obtained also from the time separation function. We showed that for sub-Lorentzian manifolds this is not generally true anymore and that the time separation function defines a new {\it time separation topology}, that can be thought of as a different extension of the Alexandrov topology. Our main interest is the study these two topologies, their similarities and differences from Lorentzian Alexandrov topologies. We proved that the Alexandrov and time separation topologies on a sub-Lorentzian manifold do not generally coincide neither with each other nor with the manifold topology. 

We introduce a third extension of the Alexandrov topology to sub-Lorentzian manifolds, in which we force the sets from the Alexandrov topology to be open with respect to the manifold topology. They will correspond to what we call {\it opened causal relations}. The opened causal relations provide a useful tool both for our purpose of comparison of topologies and for the generalization of the causal hierarchy of space-times to the sub-Lorentzian case.

The work is organized in the following way. In Section~\ref{sec:basic} we review the main definitions of Lorentzian geometry and introduce them for sub-Lorentzian manifolds. In Section~\ref{sec:AlexTopology} we study the reachable sets and the causal structure of sub-Lorentzian manifolds, which allows us to introduce the Alexandrov topology. Section~\ref{sec:AlexTopology} contains our main result, where we compare the Alexandrov topology with the manifold topology. We also present different ways of introducing the Alexandrov topology and study some special class of sub-Lorentzian manifolds, that we call {\it chronologically open}, in which the Alexandrov topology behaves similarly as in the classical Lorentzian case. The last Section~\ref{sec:Separation} is devoted to the study of the Alexandrov topology and the time separation topology.


\section{Basic Concepts}\label{sec:basic}

\subsection{Lorentzian Geometry}
\begin{definition}
Let $M$ be a smooth manifold and $g$ be a smoothly varying $(0,2)$-tensor with one negative eigenvalue on $M$.\footnote{Note that we cannot define the eigenvalues of a quadratic form, but their sign due to Sylvester's Theorem of Inertia. We call the number of negative eigenvalues the index of the form.} Then the pair $(M,g)$ is called a Lorentzian manifold. We assume also that $M$ is connected throughout the paper. 
\end{definition}
 
The symmetric torsion free Levi-Civita connection, normal neighbourhoods and exponential maps are defined as in the Riemannian geometry. We refer the reader to~\cite{ON} for notations and the main definitions and results.
\begin{definition}
Let $(M,g)$ be a Lorentzian manifold and $p\in M$. A vector $v\in T_pM$ is called
\begin{itemize}
\item spacelike, if $g(v,v)>0$ or $v=0$,
\item null or lightlike, if $g(v,v) = 0$ and $v\neq 0$,
\item timelike, if $g(v,v) < 0$,
\item nonspacelike, if $v$ is null or timelike.
\end{itemize}
A vector field $V$ is called timelike, if $V_p$ is timelike for all $p\in M$, and similarly for the other conditions.\end{definition}

\begin{definition}
Let $(M,g)$ be a Lorentzian manifold. A globally defined timelike vector field $T$ is called a time orientation of $(M,g)$. The triplet $(M,g,T)$ is known as space-time or time oriented manifold.
\end{definition}
Every Lorentzian manifold is either time orientable or admits a twofold time orientable cover~\cite{BEE96}. Curves on a space-time are distinguished according to their causal nature and time orientation as stated in the following definition.

\begin{definition}
Let $(M,g,T)$ be a space-time. An absolutely continuous curve $\gamma\colon I\to M$~is 
\begin{itemize}
\item future directed, if $g(T,\dot\gamma)<0$ almost everywhere,
\item past directed, if $g(T,\dot\gamma)>0$ almost everywhere.
\end{itemize}
We call an absolutely continuous curve $\gamma\colon I\to M$ null (timelike or nonspacelike) and future or past directed, if $g(\dot\gamma,\dot\gamma)= 0$ ($<0$ or $\leq 0$) almost everywhere and $\gamma$ is future directed or past directed. We call $\gamma$ simply null (timelike or nonspacelike), if it is null (timelike or nonspacelike) and either past or future directed.
\end{definition}
We abbreviate timelike to t., nonspacelike to nspc., future directed to f.d., and past directed to p.d.\ from now on. Nspc.f.d.\ curves are also called {\it causal}.

\begin{definition}
Let $M$ be a space-time and $p,q\in M$. We write
\begin{itemize}
\item $p\leq q$ if $p=q$ or there exists an absolutely continuous nspc.f.d.\ curve from $p$ to $q$,
\item $p\ll q$ if there exists an absolutely continuous t.f.d.\ curve from $p$ to $q$.
\end{itemize}
Define the chronological past $I^-$, future $I^+$, the causal past $J^-$ and future $J^+$ of $p\in M$~by
\begin{align*}
I^+(p) &= \{q\in M \:|\:p \ll q\},\qquad
I^-(p) = \{q\in M \:|\:q \ll p\},\\
J^+(p) &= \{q\in M \:|\:p \leq q\},\qquad
J^-(p) = \{q\in M \:|\:q \leq p\}.
\end{align*}
\end{definition}
Let $U\subset M$ be an open set. Then we write $\ll_U$ for the causal relation $\ll$ taken in $U$, where $U$ is considered as a manifold itself, and  $I^+(p,U)$ for the future set obtained on it. Observe, that usually $I^+(p,U) \neq I^+(p)\cap U$ since $U$ might lack convexity.

In~\cite[Chapter 5, Proposition 34]{ON} it is shown that if $U$ is a normal neighbourhood of $p$, then $I^+(p,U) = \exp_p(I^+(0)\cap V_p)$ where $I^+(0)\subset T_pM$ is the Minkowski light cone and $V_p$ is the neighbourhood of the origin of $T_pM$ on which $\exp_p$ is a diffeomorphism to $U$.

The definition of the order immediately gives that $\leq,\ll$ are transitive and $p\ll q \Ra p\leq q$. We state the following result on stronger transitivity of these relations.

\begin{proposition}~\cite{ON,Penr}\label{transitiv} The following is true for space-times.
\begin{itemize} 
\item[1)]{If either $p\leq r, r \ll q$ or $p\ll r, r\leq q$, then $p\ll q$.}
\item [2)] {Let $\gamma\colon [0,1]\to M$ be a nonspacelike curve, $p=\gamma(0)$, $q=\gamma(1)$. If $\gamma$ is not a null geodesic (up to reparametrization), then there is a timelike $\sigma\colon [0,1]\to M$ such that $\sigma(0) = p$ and $\sigma(1) = q$.}
\end{itemize}
\end{proposition}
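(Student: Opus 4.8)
The plan is to derive the first assertion from the second and to put the work into the second. For (1), assume $p\leq r$ and $r\ll q$; the case $p\ll r$, $r\leq q$ is symmetric, and if $p=r$ there is nothing to prove. Choose an absolutely continuous nspc.f.d.\ curve $\mu$ from $p$ to $r$ and a t.f.d.\ curve $\nu$ from $r$ to $q$. Their concatenation $\gamma$, suitably reparametrized, is an absolutely continuous nspc.f.d.\ curve from $p$ to $q$ for which $g(\dot\gamma,\dot\gamma)<0$ on a set of positive measure (namely on the part coming from $\nu$). Hence $\gamma$ is not null almost everywhere, so in particular it is not a null geodesic up to reparametrization, and part (2) yields a t.f.d.\ curve from $p$ to $q$, that is, $p\ll q$.

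For (2) I would isolate one local statement as the crux: \emph{if $C$ is a convex normal neighbourhood and $\alpha\colon[0,1]\to C$ is nspc.f.d.\ but not a null geodesic up to reparametrization, then $\alpha(1)\in I^+(\alpha(0),C)$.} Granting it, the local push-up follows at once: if $x\leq_C y$ and $y\ll_C z$ (or $x\ll_C y\leq_C z$), concatenating the two curves gives an nspc.f.d.\ curve in $C$ with a timelike portion, hence not a reparametrized null geodesic, so $x\ll_C z$. To prove the local statement, transport everything by $\exp_{\alpha(0)}^{-1}$ to a star-shaped neighbourhood of the origin in $T_{\alpha(0)}M$, write $\beta(t)=\exp_{\alpha(0)}^{-1}\alpha(t)$, and consider $f(t)=g_{\alpha(0)}\big(\beta(t),\beta(t)\big)$; by the description of $I^{\pm}(\cdot,C)$ in a normal neighbourhood recalled above, it suffices to show $\beta(t)$ is timelike and future pointing for $t$ near $1$. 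The Gauss lemma identifies $\tfrac12 f'(t)$ with the inner product of $\dot\alpha(t)$ and the future-pointing radial geodesic direction at $\alpha(t)$, both causal, hence the a.e.\ derivative of $f$ is $\leq 0$ and $f$ is non-increasing; since $f(0)=0$ this gives $f\leq 0$ throughout, and $f$ is strictly decreasing somewhere — equivalently $\beta(t)$ enters the open timecone — unless $\dot\alpha(t)$ is radial and null for a.e.\ $t$, in which case a reparametrization exhibits $\alpha$ as the null geodesic $t\mapsto\exp_{\alpha(0)}(tv)$, against the hypothesis.

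To globalize, note first that since $\gamma$ is not a reparametrized null geodesic there must be a closed subinterval $[a,b]\subset[0,1]$ with $\gamma([a,b])$ in a single convex normal neighbourhood $C$ and $\gamma|_{[a,b]}$ not a reparametrized null geodesic: otherwise every such local piece would be a reparametrized null geodesic, and by uniqueness of null geodesics through a point in a convex neighbourhood these pieces would patch up to make $\gamma$ a reparametrized null geodesic on all of $[0,1]$. The local statement applied to $\gamma|_{[a,b]}$ gives $\gamma(a)\ll\gamma(b)$. Finally one propagates this chronological relation out to $0$ and $1$: covering the compact set $\gamma([0,1])$ by finitely many convex normal neighbourhoods, and knowing say $\gamma(a)\ll\gamma(s)$, one realizes this by a t.f.d.\ curve, passes to a point of that curve inside the next convex neighbourhood and close to $\gamma(s)$, applies the local push-up there together with the causal sub-curve of $\gamma$ beyond $\gamma(s)$, and uses transitivity of $\ll$ to extend to $\gamma(a)\ll\gamma(s')$; symmetrically to the left. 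After finitely many steps $p=\gamma(0)\ll\gamma(1)=q$.

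The genuine obstacle is the local statement: carrying out the Gauss-lemma monotonicity argument rigorously for a merely absolutely continuous causal curve — so that every differential (in)equality is taken in the almost-everywhere sense and then integrated — and extracting from it precisely the dichotomy ``$\beta$ enters the open timecone, or $\alpha$ is a reparametrized null geodesic''. The rest — the concatenation in (1), the push-up, the patching argument, and the out-propagation — is routine once that is available; the whole result is classical, cf.\ \cite{ON,Penr}.
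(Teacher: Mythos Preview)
The paper does not prove this proposition; it is quoted from \cite{ON,Penr} as a classical fact. Your outline is precisely the standard argument found there: the Gauss-lemma monotonicity computation in a convex normal neighbourhood gives the local statement, a finite cover together with repeated local push-up globalizes it, and (1) is reduced to (2) by concatenation. So there is no ``paper's own proof'' to compare against, and your sketch is correct.

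Two small points worth tightening. In the local step you assert that the radial direction at $\alpha(t)$ is future-pointing and causal; but that is equivalent to $\beta(t)$ lying in the closed future cone of $T_{\alpha(0)}M$, which is part of what you are proving. A short open-and-closed (continuous induction) argument is needed to break the circularity: the set of $t$ with $\beta((0,t])$ contained in the closed future cone is nonempty (small $t$), closed (continuity of $f$), and open (once there, $f'\le 0$ a.e.\ keeps $f\le 0$, hence keeps $\beta$ in the cone). Second, in deducing (1) from (2), the statement of (2) as written only promises a \emph{timelike} $\sigma$, not a priori a future-directed one; the future orientation is a by-product of the construction (a small variation of a f.d.\ curve remains f.d.), so strictly speaking you are invoking the proof of (2) rather than its bare statement. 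Both points are routine once noticed, and you have already correctly flagged the one genuine technicality --- carrying the monotonicity argument through for merely absolutely continuous causal curves rather than piecewise smooth ones.
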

Lorentzian manifolds can be categorized in different levels of a causal hierarchy.

\begin{definition}\label{def causalities}
 We call a space-time $(M, g, T)$
\begin{itemize}
\item[1)] chronological, if there is no $p\in M$ such that $p\ll p$,
\item[2)] causal, if there are no two points $p, q \in M$, $p\neq q$, such that $p\leq q\leq p$,
\item[3)] strongly causal, if for every point $p$ and every open neighbourhood $U$ of $p$ there is a neighbourhood $V\subset U$ of $p$, such that no nspc.\ curve that leaves the neighbourhood $V$ ever returns to it,
\end{itemize}
\end{definition}
Neighbourhoods $V$ in the definition of strongly causal manifolds are called causally convex. Each requirement in Definition~\ref{def causalities} is stronger than the preceding one and no two requirements are equivalent, see~\cite{BEE96}. An example of strongly causal manifold is any convex neighbourhood $U$ of a point with compact closure. 

The time-separation function, or the Lorentzian distance function, is defined by
\begin{equation}\label{eq:time_separation}
\ts(p,q) = \sup\left\{\int_0^1\sqrt{-g(\dot\gamma,\dot\gamma)\:}dt\:|\:\gamma \in \Omega_{p,q}\right\},
\end{equation}
where the space $\Omega_{p,q}$ consists of future directed nonspacelike curves defined on the unit interval joining $p$ with $q$. If $\Omega_{p,q}=\emptyset$ the we declare the supremum is equal to $0$. Because the arc length
$L(\gamma) = \int_0^1\sqrt{-g(\dot\gamma,\dot\gamma)}\,dt$
of the curve $\gamma$ is invariant under monotone reparameterization, the normalization to the unit interval is admissible. It follows immediately from the definition that $\ts$ satisfies the inverse triangle inequality
\[
\ts(p,q) \geq \ts(p,r) + \ts(r,q)\quad\text{for all points} \quad p\leq r\leq q\in M. 
\]
However, the distance function fails to be symmetric, possibly even to be finite, and it vanishes outside the causal future set. 


\subsection{Sub-Lorentzian Manifolds}


The setting we are going to explore now is a generalization of the Lorentzian geometry. On  a sub-Lorentzian manifold a metric, that is a non-degenerate scalar product at each point, varying smoothly on the manifold, is defined only on a subspace of the tangent space, but not necessarily on the whole tangent space. If the subspace is proper, those manifolds may behave quite differently from Lorentzian ones.

\begin{definition}
A smooth distribution $D$ on a manifold $M$ is a collection of subspaces $D_p\subset T_pM$ at every point $p\in M$ such that for any point $p\in M$ there exists a neighbourhood $U$ and smooth vector fields $X_1,...,X_k$ satisfying $D_q = \mathrm{span}\{ X_{1}(q),...,X_{k}(q)\}$ for all $q\in U$. We write $\rank_p(D) = \dim(D_p)$.
\end{definition}
We assume that $2\leq k=\rank_p(D) <\dim(M)=n$ everywhere. 
An {\it admissible or horizontal} curve $\gamma\colon I\to M$ is an absolutely continuous curve such that $\dot\gamma(t)\in D_{\gamma(t)}$ almost everywhere and such that $\dot\gamma$ is locally square integrable with respect to an auxiliary Riemannian metric. We are interested in whether two arbitrary points can be connected by an admissible curve. That need not be possible due to the Frobenius theorem.

\begin{definition}
Let $N\in\N$ and $I=(i_1,...,i_N)\in\N^N$ be a multi-index. Let $X_1,...,X_k$ be vector fields on $M$. We define
$X_I = [X_{i_1},[X_{i_2},...[X_{i_{N-1}},X_{i_N}]]]$.
A distribution $D$ satisfies the bracket-generating hypothesis, if for every point $p\in M$ there are $N(p)\in\N$, a neighbourhood $U$ of $p$ such that  $D_q = \mathrm{span}\{ X_{1}(q),...,X_{k}(q)\}$ for all $q\in U$ and 
\[
T_pM = \mathrm{span}\{X_I(p) \:|\: I=(i_1,...,i_N),\ \ N\leq N(p)\}.
\]
\end{definition}

A sufficient condition of the connectivity by admissible curves is given by the Chow-Rashevskii theorem~\cite{C,M,R}, stating that if $M$ is a connected manifold with a bracket-generating distribution $D$, then any two points $p,q\in M$ can be connected by an admissible curve.

\begin{definition}
A sub-Lorentzian manifold is a triple $(M,D,g)$ where $M$ is a manifold with a smooth bracket generating distribution $D$ and a non-degenerate symmetric bilinear form $g\colon D_p\times D_p \to \R$ of constant index $1$ smoothly varying on $M$. 

A sub-space-time is a quadruple $(M,D,g,T)$, where $(M,D,g)$ is a sub-Lorentzian manifold and $T$ is a globally defined horizontal timelike vector field. Like in the Lorentzian case we call $T$ a time orientation.
\end{definition}

Similar to Lorentzian geometry, in a sub-Lorentzian manifold we have the order relations $\leq$, $\ll$, satisfying the following properties.

\noindent $\bullet$ $\leq,\ll$ are partial orders. Note that the first property in Proposition~\ref{transitiv} does not hold anymore for sub-space-times, see Example~\ref{weirdstuff1}. However, it still holds for a smaller class of sub-space-times, that we call chronologically open sub-space-times, see Definition~\ref{def:precausal}.

\noindent $\bullet$ Causality conditions are defined analogously to the Lorentzian case. 

\noindent $\bullet$  The sets $J^\pm, I^\pm$ for sub-Lorentzian manifolds are defined as before for Lorentzian ones. 

The following statement can be proved by the same arguments as in the sub-Riemannian case, see, for instance~\cite{M}.
\begin{proposition}\label{extension}
A sub-Lorentzian metric $g$ on a sub-space-time $(M,D,g,T)$ can always be extended to a Lorentzian metric $\tilde g$ over the whole manifold $M$. A sub-space-time $(M,D,g,T)$ thus becomes a space-time $(M,\tilde g,T)$ with the same time orientation $T$.
\end{proposition}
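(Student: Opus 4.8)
The plan is to proceed in two steps: first extend the bilinear form $g$ from the distribution $D$ to a full $(0,2)$-tensor $\tilde g$ of index $1$ on $TM$, and then observe that the time orientation $T$ is still timelike with respect to $\tilde g$. For the first step I would work locally and then patch with a partition of unity, being careful that index $1$ is not a convex condition, so a naive averaging argument will not work directly.

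For the local construction, fix $p\in M$ and choose a neighbourhood $U$ on which $D$ is spanned by smooth vector fields $X_1,\dots,X_k$ with $X_1 = T$ (possible after relabelling, since $T$ is horizontal and timelike, hence nonzero, so it can be completed to a frame of $D$). After a Gram--Schmidt type procedure adapted to the index-$1$ form $g_D$ — using that $g_D(T,T)<0$ to normalise $T$ first and then orthonormalising the remaining spacelike directions — we may assume $g_D(X_1,X_1)=-1$, $g_D(X_i,X_j)=\delta_{ij}$ for $i,j\ge 2$, and $g_D(X_1,X_i)=0$. Complete $X_1,\dots,X_k$ to a smooth local frame $X_1,\dots,X_k,X_{k+1},\dots,X_n$ of $TM$ on $U$ (shrinking $U$ if necessary), and define $\tilde g_U$ on $U$ by declaring this frame to be orthonormal with the sign pattern $(-,+,+,\dots,+)$; that is, $\tilde g_U = -\theta^1\otimes\theta^1 + \sum_{i=2}^n \theta^i\otimes\theta^i$ where $\theta^1,\dots,\theta^n$ is the dual coframe. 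By construction $\tilde g_U$ is smooth, has index $1$, and restricts to $g_D$ on $D|_U$.

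To globalise, take a locally finite cover $\{U_\alpha\}$ with such extensions $\tilde g_\alpha$ and a subordinate partition of unity $\{\rho_\alpha\}$. The form $\tilde g := \sum_\alpha \rho_\alpha \tilde g_\alpha$ is smooth, symmetric, and agrees with $g_D$ on $D$ (since each $\tilde g_\alpha$ does and $\sum \rho_\alpha = 1$). It remains to check $\tilde g$ has index $1$ everywhere. Here is where I use that $T$ is a globally defined horizontal timelike field: at each $q$, $\tilde g_q(T_q,T_q) = \sum_\alpha \rho_\alpha(q)\, g_D(T_q,T_q) = g_D(T_q,T_q) < 0$, so $\tilde g_q$ has index at least $1$. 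For the reverse inequality, I would argue that on the $g_D$-orthogonal complement of $T$ within $D$, each $\tilde g_\alpha$ is positive definite, and then extend this: more carefully, one checks that the $\tilde g$-orthogonal complement $T_q^{\perp_{\tilde g}}$ is an $(n-1)$-dimensional subspace on which $\tilde g_q$ is a convex combination of positive definite forms, hence positive definite, so the index is exactly $1$. Then $(M,\tilde g)$ is Lorentzian, $T$ is timelike for $\tilde g$, so $(M,\tilde g,T)$ is a space-time with the same time orientation, and any horizontal nspc.\ or t.\ curve for $g$ is automatically nspc.\ resp.\ t.\ for $\tilde g$.

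The main obstacle is precisely the verification that the convex combination $\tilde g$ retains index exactly $1$ rather than acquiring a larger negative-definite subspace — convex combinations of index-$1$ forms need not have index $1$ in general. The device that saves the argument is that all the local extensions share the \emph{same} negative direction $T$ (up to the freedom encoded in the frame), so one should set up the local extensions so that $\tilde g_\alpha(T,\cdot)$ agrees on overlaps, or at least so that $T$ spans a maximal negative subspace for each $\tilde g_\alpha$; then the orthogonal-complement argument goes through. This is exactly the point at which the hypothesis that $T$ is globally defined and horizontal is essential, and it is the reason the statement is quoted as analogous to the sub-Riemannian case (where any positive-definite local extensions may be averaged freely, with no index constraint to preserve).
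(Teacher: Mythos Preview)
Your proposal correctly identifies the crux of the matter: a partition-of-unity average of local Lorentzian extensions need not be Lorentzian, and the claim that each $\tilde g_\alpha$ restricts to a positive-definite form on $T^{\perp_{\tilde g}}$ is not justified --- the $\tilde g$-orthogonal complement of $T$ is not the $\tilde g_\alpha$-orthogonal complement, and your local frames can disagree wildly off $D$. A concrete failure already occurs in dimension $3$ with $D=\mathrm{span}\{e_1,e_2\}$, $g_D=\mathrm{diag}(-1,1)$ and $T=e_1$: the two index-$1$ extensions
\[
\tilde g_{\pm}=\begin{pmatrix}-1&0&\pm 10\\ 0&1&0\\ \pm 10&0&-50\end{pmatrix}
\]
both restrict to $g_D$ on $D$ and make $T$ timelike, yet their average $\mathrm{diag}(-1,1,-50)$ has index $2$. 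So the gap you flag is real, and your suggested repair (arranging that $\tilde g_\alpha(T,\cdot)$ agree on overlaps) amounts to fixing a global complement to $D$ in advance --- which is precisely what the paper's route does from the outset.

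The paper does not spell out a proof but refers to the sub-Riemannian analogue and, in the paragraph immediately following the proposition, writes down the intended construction: choose any auxiliary Riemannian metric $h$ on $M$ (here the partition-of-unity argument is unproblematic, positive-definiteness being a convex condition), let $D^{\perp}$ be the $h$-orthogonal complement of $D$, and set $\tilde g = g\circ\pi_D + h\circ\pi_{D^{\perp}}$ with respect to the splitting $TM=D\oplus D^{\perp}$. This is globally defined, smooth, restricts to $g$ on $D$, and is block-diagonal with blocks of index $1$ and $0$, hence Lorentzian; since $T\in D$, it remains timelike for $\tilde g$. The advantage over your approach is that all averaging takes place in the Riemannian category, so the index obstruction never arises.
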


Proposition~\ref{extension} allows us to use results from Lorentzian geometry, because every horizontal t.f.d.\ curve will be t.f.d.\ with respect to all extended metrics. Moreover, using extended metrics $\tilde g_\lambda = g\circ\pi_D + \lambda^2\:h\circ\pi_{D^\bot}$ for some Riemannian metric $h$, one can show by letting $\lambda\to \infty$, that a curve, which is t.f.d.\ with respect to all extended metrics $\tilde g_\lambda$, is actually t.f.d.\ horizontal.

Let $(M,D,g)$ be a sub-Lorentzian manifold. An open subset $U\subset M$ is called convex, if $U$ has compact closure $\bar U$ and there is an extension $\tilde g$ of $g$ and an open set $V\supset\bar U$ such that both $V$ and $U$ are uniformly normal neighbourhoods of  their points in $(M,\tilde g)$ in the sense of Lorentzian geometry. Thus one can introduce coordinates and a Lorentzian orthonormal frame $\{T,X_1,...,X_d\}$ on $V$. Having a Lorentzian metric one can construct an auxiliary Riemannian metric as in~\cite{BEE96}. Then, using Proposition~\ref{extension}, we adapt the following statement for continuous causal curves, i.e.\ continuous curves $\gamma$ satisfying $s< t\Ra \gamma(s)\leq \gamma(t)$, $\gamma(s)\neq\gamma(t)$:

\begin{proposition}\cite{BEE96}\label{Lipschitz}
Let $M$ be a sub-space-time and $\gamma\colon I\to M$ a continuous causal curve. Then $\gamma$ is locally Lipschitz with respect to an auxiliary Riemannian metric.
\end{proposition}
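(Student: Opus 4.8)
The plan is to deduce the statement from its purely Lorentzian version, available in \cite{BEE96}, by passing through the metric extension of Proposition~\ref{extension}. First I would fix a Lorentzian extension $\tilde g$ of $g$ on $M$ with the same time orientation $T$. The key — and essentially only — sub-Lorentzian observation is that horizontality is inherited by $\tilde g$-causality: if $\gamma$ is a horizontal nspc.f.d.\ curve, then $\dot\gamma\in D$ forces $\tilde g(\dot\gamma,\dot\gamma)=g(\dot\gamma,\dot\gamma)\leq 0$ and $\tilde g(T,\dot\gamma)=g(T,\dot\gamma)<0$ almost everywhere, so $\gamma$ is nspc.f.d.\ for $\tilde g$. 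Hence $p\leq q$ in the sub-space-time implies $p\leq q$ in the space-time $(M,\tilde g,T)$, and therefore a continuous causal curve $\gamma\colon I\to M$ in the sub-Lorentzian sense is in particular a continuous causal curve in the ordinary Lorentzian sense for $(M,\tilde g,T)$. The auxiliary Riemannian metric in the statement is then the one constructed from $\tilde g$ as in \cite{BEE96}.

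It remains to recall the Lorentzian local argument. Given $t_0\in I$, I would choose a convex neighbourhood $U$ of $\gamma(t_0)$ with compact closure; such a $U$ is strongly causal, so after shrinking it may be taken causally convex, and it carries coordinates $(x^0,x^1,\dots,x^n)$ induced by a $\tilde g$-orthonormal frame $\{T,X_1,\dots,X_n\}$ in which $x^0$ is strictly increasing along $\tilde g$-causal curves and all $\tilde g$-causal cones over $U$ are contained in a fixed linear cone of the form $\{\text{(Euclidean norm of spatial components)}\leq C\,|x^0|\}$. For $s<t$ in a subinterval with $\gamma([s,t])\subset U$, causal convexity keeps any nspc.f.d.\ curve realising $\gamma(s)\leq\gamma(t)$ inside $U$, and the cone condition gives a bound on the increment of the spatial coordinates by $C\bigl(x^0(\gamma(t))-x^0(\gamma(s))\bigr)$. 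Reparametrising $\gamma$ by the monotone function $x^0\circ\gamma$ makes every coordinate function Lipschitz in $x^0$, and since on the compact set $\bar U$ the coordinate Euclidean metric and the auxiliary Riemannian metric are comparable, $\gamma$ is locally Lipschitz with respect to the latter.

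The genuine content lies entirely in the Lorentzian local estimate of the second paragraph, which is classical; the sub-Lorentzian part contributes only the one-line inheritance remark. The points that need care are the choice of $U$ — small enough that the $\tilde g$-cones are uniformly pinched by a fixed linear cone in the chosen coordinates, and causally convex so that connecting causal curves cannot escape $U$, both available because every point has convex neighbourhoods with compact closure and these are strongly causal — and the interpretation of ``locally Lipschitz'': it is to be read up to the monotone time reparametrisation, equivalently as local rectifiability with respect to the auxiliary Riemannian metric. With this understanding the assertion is exactly the Lorentzian statement of \cite{BEE96} transported along Proposition~\ref{extension}.
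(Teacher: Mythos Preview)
Your approach is exactly the one the paper takes: it does not give a separate proof but simply notes, in the sentence preceding the statement, that ``using Proposition~\ref{extension}, we adapt the following statement'' from \cite{BEE96}; the reduction via the Lorentzian extension and the inheritance of $\leq$ that you spell out is precisely that adaptation. Your additional sketch of the Lorentzian local argument goes beyond what the paper records, but is consistent with the cited source.
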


Using the Rademacher theorem we obtain: 

\begin{corollary}\label{regularity}
Let $M$ be a sub-space-time. A continuous causal curve is absolutely continuous and its velocity vector almost everywhere is nspc.f.d.\ with respect to an extension of the sub-Lorentzian metric and square integrable with respect to an auxiliary Riemannian metric. It follows that the curve $\gamma$ is horizontal.
\end{corollary}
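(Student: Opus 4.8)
The plan is to reduce everything to standard Lorentzian facts via the extensions of Proposition~\ref{extension}, and then to squeeze the velocity into the distribution $D$ by letting the parameter $\lambda$ of the family $\tilde g_\lambda = g\circ\pi_D + \lambda^2\,h\circ\pi_{D^\bot}$ tend to infinity. First I would note that if $p\leq q$ in the sub-space-time, then $p\leq q$ with respect to every extension $\tilde g_\lambda$: a horizontal nspc.f.d.\ curve is nspc.f.d.\ for each $\tilde g_\lambda$, since these metrics agree with $g$ on $D$ and $T\in D$. Hence a continuous causal curve $\gamma$ in the sub-space-time is a continuous causal curve in each space-time $(M,\tilde g_\lambda,T)$. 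By Proposition~\ref{Lipschitz} the curve $\gamma$ is locally Lipschitz for an auxiliary Riemannian metric $h$; in particular it is locally absolutely continuous, hence absolutely continuous on compact subintervals, and by Rademacher's theorem it is differentiable almost everywhere with $\dot\gamma\in L^\infty_{\mathrm{loc}}\subset L^2_{\mathrm{loc}}$ with respect to $h$.

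Next I would show that $\dot\gamma(t)$ is a.e.\ nspc.f.d.\ with respect to each $\tilde g_\lambda$. Fix $\lambda$ and a point $t$ of differentiability, choose a convex neighbourhood $U$ of $\gamma(t)$ for $\tilde g_\lambda$, and use that for small $\epsilon>0$ the causal segment of $\gamma$ from $\gamma(t)$ to $\gamma(t+\epsilon)$ stays in $U$, so that $\gamma(t+\epsilon)\in J^+_{\tilde g_\lambda}(\gamma(t),U)\subset \exp_{\gamma(t)}\big(\bar C\cap V\big)$, where $\bar C\subset T_{\gamma(t)}M$ is the closed causal cone of $\tilde g_\lambda$ (cf.\ the description of $I^+(p,U)$ recalled above, applied in a convex neighbourhood). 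Since $\bar C$ is a closed cone and $d\exp_{\gamma(t)}|_0=\mathrm{Id}$, the difference quotients $\tfrac1\epsilon\exp_{\gamma(t)}^{-1}(\gamma(t+\epsilon))$ lie in $\bar C$ and converge to $\dot\gamma(t)$ as $\epsilon\to 0^+$, so $\dot\gamma(t)\in\bar C$, i.e.\ $\dot\gamma(t)$ is nspc.f.d.\ for $\tilde g_\lambda$ (or zero). Running this over $\lambda\in\N$ and intersecting the corresponding full-measure sets, for a.e.\ $t$ we may write $\dot\gamma(t)=v+w$ with $v\in D_{\gamma(t)}$ and $w\perp_h D_{\gamma(t)}$, and then $0\geq\tilde g_n(\dot\gamma(t),\dot\gamma(t))=g(v,v)+n^2 h(w,w)$ for all $n\in\N$; letting $n\to\infty$ forces $w=0$. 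Thus $\dot\gamma(t)\in D_{\gamma(t)}$ a.e., and together with $\dot\gamma\in L^2_{\mathrm{loc}}$ this means $\gamma$ is horizontal; since $g$ is the restriction of $\tilde g_1$ to $D$, the same computation shows $\dot\gamma$ is a.e.\ nspc.f.d.\ for the sub-Lorentzian metric itself.

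The main obstacle I anticipate is the difference-quotient step: one must know that the causal future inside a convex neighbourhood is contained in the exponential image of the \emph{closed} solid cone, so that closedness can be used to pass membership to the limit; this rests on the convexity of $U$ together with a small-$\epsilon$ confinement argument for the connecting causal curve, while the bookkeeping of the $\lambda$-dependent null sets through a countable family is routine. A minor point to be careful about is that the argument only yields $\dot\gamma(t)$ nspc.f.d.\ \emph{or} $0$; the set where $\dot\gamma(t)=0$ is harmless for absolute continuity, horizontality and square integrability, and can be discarded by reparametrisation if one wants a genuinely nspc.f.d.\ velocity almost everywhere.
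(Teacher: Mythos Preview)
Your proposal is correct and follows the same route the paper sketches: the Lipschitz bound from Proposition~\ref{Lipschitz} together with Rademacher's theorem yields absolute continuity and local $L^2$-integrability of $\dot\gamma$, and the $\lambda\to\infty$ family $\tilde g_\lambda$ from the remark after Proposition~\ref{extension} then forces horizontality. The paper records the corollary with a one-line appeal to Rademacher and leaves implicit both the difference-quotient argument for membership of $\dot\gamma(t)$ in the closed causal cone and the passage to the countable family of $\tilde g_n$; your write-up supplies exactly these details, so the two arguments coincide in substance.
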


Lorentzian and sub-Lorentzian manifolds do not carry a natural metric distance function which would allow a natural topology on curves in the manifold. Since monotone reparametrization does not influence causal character, we define the $C^0$-topology in the following way.

\begin{definition}
Let $U,V,W$ be open sets in topology $\tau$ of $M$ such that $V,W\subset U$. Then we define the set
\[B_{U,V,W,0,1} = \left\{\gamma\in C([0,1],M)\vert\ \gamma(0)\in V,\  \gamma(1)\in W,\ \gamma([0,1])\subset U\right\}\]
and to eliminate the need to fix parametrization we take the union over all possible parameterizaitions $B_{U,V,W} = \bigcup B_{U,V,W,0,1}$.
The $C^0$-topology on curves is the topology generated by the basis
\[
\mathscr{B} := \left\{B_{U,V,W} \:|\:U,V,W\in\tau, V,W\subset U\right\}.
\]
\end{definition}

The $C^0$-topology is constructed in such a way that curves $\gamma_n\colon [0,1]\to M$ converge to $\gamma\colon[0,1]\to\R$ if and only if
\[
\gamma_n(0)\to \gamma(0),\quad\gamma_n(1)\to\gamma(1),
\]
and for any $U\in \tau$ and $\gamma([0,1])\subset U$ there exists a positive integer $N$ such that $\gamma_n([0,1])\subset U$ for all $n\geq N$.
For general space-times or sub-space-times this notion of convergence might not be too powerful, some information on that may be found in~\cite{BEE96}. However, it becomes useful for strongly causal sub-space-times.

\begin{theorem}\label{lem c0}
If nspc.f.d. horizontal curves $\gamma_n\colon [0,1]\to M$ converge to $\gamma\colon [0,1]\to M$ in the $C^0$-topology on curves in a strongly causal sub-space-time then $\gamma$ is horizontal nspc.f.d.
\end{theorem}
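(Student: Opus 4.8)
The plan is to prove that the limit curve $\gamma$ is a \emph{continuous causal curve} in the sense used just before Proposition~\ref{Lipschitz} --- continuous with $\gamma(s)\leq\gamma(t)$ and $\gamma(s)\neq\gamma(t)$ whenever $s<t$ --- and then to quote Corollary~\ref{regularity}, which promotes any continuous causal curve to an absolutely continuous horizontal nspc.f.d.\ curve. Continuity of $\gamma$ is free, being a $C^0$-limit, so the real content is the causal ordering $\gamma(s)\leq\gamma(t)$ of the values of $\gamma$ together with horizontality of the resulting curve.

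First I would localise. The image $\gamma([0,1])$ is compact, so I cover it by finitely many convex neighbourhoods, which by strong causality I may take causally convex as well, and by a Lebesgue-number argument I pick a partition $0=s_0<\dots<s_N=1$ with $\gamma([s_{i-1},s_i])$ contained in a single such $U_i$, carrying an extension $\tilde g$ of $g$ and an enveloping normal neighbourhood. Since $\leq$ is transitive and causal curves concatenate, it suffices to analyse one sub-interval. The place where strong causality is indispensable, and which I expect to be the main obstacle, is showing that for large $n$ the sub-arc $\gamma_n([s_{i-1},s_i])$ actually lies inside $U_i$: from $C^0$-convergence one only controls the full image $\gamma_n([0,1])$ and the endpoints, but causal convexity of $U_i$ forbids an nspc.\ curve to leave $U_i$ and return, which, together with the endpoints $\gamma_n(s_{i-1}),\gamma_n(s_i)$ being pinned near those of $\gamma$, confines the relevant sub-arc. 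This is the sub-Lorentzian transcription of the limit-curve bookkeeping in~\cite{BEE96}; strong causality is also what rules out the values of $\gamma$ forming a closed causal curve, so that $\gamma(s)\neq\gamma(t)$ for $s<t$.

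On a convex patch $U_i$ with extension $\tilde g$, the causal relation taken inside $U_i$ is closed, since $J^+(p,U_i)=\exp_p\!\big(\overline{I^+(0)}\cap\exp_p^{-1}(U_i)\big)$ with $\overline{I^+(0)}$ the closed causal cone in $T_pM$, as in~\cite[Ch.~5]{ON} and~\cite{BEE96}. Each $\gamma_n$ is nspc.f.d.\ not only for $g$ but for every Lorentzian extension of $g$: $\dot\gamma_n\in D$ gives $\tilde g(\dot\gamma_n,\dot\gamma_n)=g(\dot\gamma_n,\dot\gamma_n)\leq 0$ and, $T$ being horizontal, $\tilde g(T,\dot\gamma_n)=g(T,\dot\gamma_n)<0$; in particular this holds for all the metrics $\tilde g_\lambda=g\circ\pi_D+\lambda^2 h\circ\pi_{D^\bot}$. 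Hence for $s_{i-1}\leq s<t\leq s_i$ and $n$ large one has $\gamma_n(s)\leq\gamma_n(t)$ inside $U_i$ with respect to $\tilde g_\lambda$, and letting $n\to\infty$ and invoking closedness gives $\gamma(s)\leq\gamma(t)$. Patching the sub-intervals, $\gamma$ is a continuous causal curve in $(M,\tilde g_\lambda,T)$ for every $\lambda>0$.

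Finally I would extract horizontality by the $\lambda\to\infty$ device indicated in the text after Proposition~\ref{extension}. By Corollary~\ref{regularity} applied in $(M,\tilde g_\lambda,T)$, $\gamma$ is absolutely continuous and $\dot\gamma(t)$ is $\tilde g_\lambda$-nspc.f.d.\ for a.e.\ $t$, for every $\lambda$. At such $t$,
\[
\tilde g_\lambda(\dot\gamma(t),\dot\gamma(t))=g\big(\pi_D\dot\gamma(t),\pi_D\dot\gamma(t)\big)+\lambda^2\,h\big(\pi_{D^\bot}\dot\gamma(t),\pi_{D^\bot}\dot\gamma(t)\big)\leq 0,
\]
so sending $\lambda\to\infty$ forces $\pi_{D^\bot}\dot\gamma(t)=0$, i.e.\ $\dot\gamma(t)\in D_{\gamma(t)}$; thus $\gamma$ is horizontal, and having horizontal velocity that is $\tilde g_\lambda$-nspc.f.d.\ it is sub-Lorentzian nspc.f.d., which is the assertion.
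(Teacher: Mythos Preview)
Your proposal follows the same route as the paper's proof --- extend the sub-Lorentzian metric to a Lorentzian one via Proposition~\ref{extension}, invoke the classical limit-curve result (the paper cites~\cite{HawEllis}, you unpack the convex-neighbourhood argument from~\cite{BEE96}), and then appeal to Corollary~\ref{regularity} to recover horizontality. Your $\lambda\to\infty$ extraction of horizontality is precisely the mechanism the paper alludes to in the paragraph after Proposition~\ref{extension} and which underlies the last sentence of Corollary~\ref{regularity}, so there is no real difference in strategy, only in how much is spelled out.

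One point deserves care. With the $C^0$-topology as defined in this paper, convergence $\gamma_n\to\gamma$ controls only the full images $\gamma_n([0,1])$ and the two endpoints $\gamma_n(0),\gamma_n(1)$; it does \emph{not} pin intermediate values $\gamma_n(s_i)$ near $\gamma(s_i)$. Your sentence ``together with the endpoints $\gamma_n(s_{i-1}),\gamma_n(s_i)$ being pinned near those of $\gamma$'' is therefore not literally justified. You seem aware of this (you flag it as ``the main obstacle'' and defer to~\cite{BEE96}), and indeed the standard fix --- using causal convexity of the $U_i$ to track where $\gamma_n$ enters and exits consecutive patches, rather than relying on parameter values --- is exactly what the references do. Since the paper itself handles this step by citation, your treatment is on the same footing; just be sure not to claim pointwise control you do not have.
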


\begin{proof}
The result is standard in Lorentzian geometry. A proof, that $\gamma$ is locally nspc.f.d. can be found in~\cite{HawEllis}. By the transitivity of $\leq$, it is also globaly nspc.f.d.. Using Proposition~\ref{extension} and Corollary~\ref{regularity}, we easily generalize the result to sub-Lorentzian manifolds.
\end{proof}


\section{Reachable Sets, Causality and the Alexandrov Topology}\label{sec:AlexTopology}


\subsection{Reachable Sets}


As was mentioned above, the manifold topology and the metric topology of a Riemannain or sub-Riemannian manifold are equivalent as follows from the Ball-Box theorem. The causal structure of Lorentzian manifolds allows us to introduce a new topology, called the Alexandrov topology that is (often strictly) coarser than the manifold topology. We are interested in comparing an analogue of the Alexandrov topology in sub-Lorentzian manifolds with the initial manifold topology. Let us begin by  reviewing the background from Lorentzian manifolds. 

It is well known that in a space-time (not a sub-space-time) $(M,g,T)$ the sets $I^+(p)$ and $I^-(p)$ are open in the manifold topology for all points $ p\in M$, see \cite{BEE96}. In sub-space-times this is not true anymore. We give two examples showing that sets $I^+(0)$, $I^-(0)$ may or may not be open in sub-space-times.
\begin{example}\label{specific example1}~\cite{G}
Let 
$M=\R^3=\{(x,y,z)\}$, $D=\mathrm{span}\left\{T=\partial_y + x^2\partial_z, X= \partial_x\right\}
$ 
and $g$ be the sub-Lorentzian metric determined by $g(T,T) = -1$, $g(T,X)=0$, $g(X,X)=1$. As $\partial_z = \half\left[X,[X,T]\right]$, the distribution $D$ is bracket generating. We choose $T$ as the time orientation and show that the set $I^+(0,U)$ is not open for any neighbourhood $U$ of the origin. Precisely, we show that for small enough $\theta>0$ the point $(0,\theta,0)$ will be contained in $I^+(0,U)$, while the point $(0,\theta,-a)$ will not be in $I^+(0,U)$ for any  $a>0$.

The curve $\gamma(t) = (0,t,0)$ is horizontal t.f.d.\ since $\dot\gamma(t) = T_{\gamma(t)}$, and for small enough times it runs in $U$, so $(0,\theta,0)\in I^+(p,U)$.

Assume that there is a horizontal nspc.f.d.\ curve $\sigma\colon [0,\tau]\to M$, $\sigma= (x,y,z)$, from $0$ to $(0,\theta,-a)$ for some $a>0$. Then
\begin{equation*}
\dot\sigma(t) = \alpha(t)T_{\sigma(t)} + \beta(t)X_{\sigma(t)}= (\beta(t),\alpha(t),x^2(t)\alpha(t)).
\end{equation*}
Since $\sigma$ is future directed, we find that $\alpha(t)>0$ almost everywhere. Then due to absolute continuity
\[
-a = z(\tau) = \int_0^\tau\dot z(t) dt = \int_0^\tau\alpha(t)x^2(t) dt,
\]
which is impossible since the integrand is non-negative almost everywhere. Hence $(0,\theta,-a)\notin I^+(0,U)$ for any $a>0$, which implies that $I^+(0,U)$ is not open in $M$.

The curve $\gamma$ lies on the boundary of $I^+(0,U)$. It is an example of a rigid curve, or a curve that cannot be obtained by any variation with fixed endpoints, see~\cite{LS,M,M94}. The curve $\gamma$ is the unique (up to reparametrization) horizontal nspc.f.d.\ curve from $0$ to~$(0,\theta,0)$. 
\end{example}

The next example shows that there are sub-space-times for which $I^\pm(p)$ are open. 
\begin{example}\label{Heisenberg}~\cite{G}
Consider the sub-space-time with $M=\R^3$, $D=\mathrm{span}\{X,Y\}$, where
\[X= \frac{\partial}{\partial x} + \half\:y\:\frac{\partial}{\partial z}, \qquad Y=\frac{\partial}{\partial y} - \half\:x\:\frac{\partial}{\partial z},\]
the metric $g(X,X) = -g(Y,Y) = -1$, $g(X,Y)=0$, and the vector field $X$ as time orientation. This sub-space-time is called the Lorentzian Heisenberg group. In this sub-space-time the sets $I^+(p)$, $ I^-(p)$ are open for all $p\in M$.
The details of the proof can be found in~\cite{G}, where the chronological future set of the origin
\[I^+(0) = \left\{(x,y,z)\:|\:-x^2+y^2+4|z| < 0,\ x>0\right\}\]
is calculated. The set $I^+(0)$ is obviously open. We apply the Heisenberg group multiplication in order to translate the chronological future set $I^+(p_0)$ of an arbitrary point $p_0=(x_0,y_0,z_0)\in \mathbb R^3$ to the set $I^+(0)$. The map
\[\Phi_{}(x,y,z) = \left(x-x_0, y-y_0, z-z_0 + \half(y\:x_0 - x\: y_0)\right)\]
maps $p_0$ to $0$, preserves the vector fields $X$ and $Y$ and hence maps $I^+(p_0)$ to $I^+(0)$. Since $\Phi$ is also a diffeomorphism of $\R^3$, we conclude that $I^+(p_0)$ is open for all $p_0\in\R^3$. Similarly $\Phi(x,y,z) = (-x,-y,z)$ exchanges $X$ for $-X$ and $Y$ for $-Y$, i.e.\ it preserves the distribution and the scalar product, but it reverses time orientation. Hence, it maps $I^-(p_1,p_2,p_3)$ to $I^+(-p_1,-p_2,p_3)$ which proves, that also all chronological past sets are open.
\end{example}

In the following definition we generalize the properties of the above mentioned map~$\Phi$.

\begin{definition}\label{sublorentzian isometry}
Let $(M,D,g,T)$ be a sub-space-time. A diffeomorphism $\Phi\colon M\to M$ is called a sub-Lorentzian isometry, if it preserves the sub-Lorentzian causal structure, i.e.
$$
\Phi_*D = D,\qquad
\Phi^*g = g,\qquad
g(\Phi_*T,T) < 0.
$$
\end{definition}

\begin{lemma}\label{quotient}
Let $G$ be a group with a properly discontinuous action by sub-Lorentzian isometries on a sub-space-time $M$. Then the quotient $M/G$ carries a canonical sub-space-time structure, such that the projection to equivalent classes $\pi\colon M\to M/G$ is a covering map and a local sub-Lorentzian isometry. Furthermore
$
I^+_{M/G}\big(\pi(p)\big) = \pi\big(I^+_M(p)\big)$.
\end{lemma}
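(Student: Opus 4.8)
The plan is to build the sub-space-time structure on $M/G$ by descending the local data $(D,g,T)$ through the covering projection, and then to transfer causal curves back and forth between $M$ and $M/G$ to get the formula for $I^+$. First I would recall that a properly discontinuous action by diffeomorphisms yields a smooth quotient $M/G$ for which $\pi\colon M\to M/G$ is a covering map; around each point of $M/G$ one chooses an evenly covered neighbourhood $\widetilde U$ and uses a local section $s\colon \widetilde U\to M$ to push forward $D$, $g$ and $T$. The key point is well-definedness: two sections differ by the action of some $\varphi\in G$, and since each $\varphi$ is a sub-Lorentzian isometry ($\varphi_*D=D$, $\varphi^*g=g$, $g(\varphi_*T,T)<0$), the pushed-forward distribution and metric agree on overlaps, so they patch to a global smooth bracket-generating distribution $D'$ and a global index-$1$ metric $g'$ on $M/G$. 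For the time orientation one has to be slightly careful: the condition $g(\varphi_*T,T)<0$ for all $\varphi\in G$ guarantees that the locally pushed-forward vector fields all lie in the same timecone, hence glue to a globally defined horizontal timelike vector field $T'$; this is exactly why the hypothesis is stated with an inequality rather than $\varphi_*T=T$. With $(D',g',T')$ in hand, $\pi$ is by construction a local sub-Lorentzian isometry.

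Next I would establish the identity $I^+_{M/G}(\pi(p)) = \pi(I^+_M(p))$ by the standard lifting argument for covering maps. The inclusion $\pi(I^+_M(p)) \subseteq I^+_{M/G}(\pi(p))$ is immediate: if $\gamma$ is a horizontal t.f.d.\ curve in $M$ from $p$ to $q$, then $\pi\circ\gamma$ is horizontal t.f.d.\ in $M/G$ (since $\pi$ is a local sub-Lorentzian isometry and causal character is a local, pointwise condition on $\dot\gamma$), joining $\pi(p)$ to $\pi(q)$. For the reverse inclusion, take a horizontal t.f.d.\ curve $\bar\gamma$ in $M/G$ from $\pi(p)$ to some point $\bar q$. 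By the path-lifting property of covering maps (applicable since $\bar\gamma$ is in particular continuous, and by Corollary~\ref{regularity} absolutely continuous), there is a unique lift $\gamma\colon[0,1]\to M$ with $\gamma(0)=p$ and $\pi\circ\gamma=\bar\gamma$. Because $\pi$ is a local diffeomorphism intertwining the two sub-Lorentzian structures, $\gamma$ is horizontal t.f.d., so $\gamma(1)\in I^+_M(p)$ and $\bar q = \pi(\gamma(1))\in\pi(I^+_M(p))$.

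I expect the main obstacle to be the careful verification that the pushed-forward time orientation is globally consistent, i.e.\ that the hypothesis $g(\varphi_*T,T)<0$ for every $\varphi\in G$ really does force all the local representatives of $T'$ into a single timecone field on $M/G$. Concretely, on an overlap one compares $s_{1*}T$ with $s_{2*}T = \varphi_*(s_{1*}T)$ for the appropriate $\varphi$; one needs that these two horizontal timelike vectors at the same point of $M/G$ are \emph{continuously} connected within the timecone, which follows because the timecones of an index-$1$ form have exactly two components and the sign condition pins down the component. A secondary technical point worth a sentence is that lifting an \emph{absolutely continuous} (rather than merely continuous) curve through $\pi$ preserves absolute continuity and the horizontality of the velocity, but this is routine since $\pi$ is a local diffeomorphism. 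Everything else—smoothness of $D'$ and $g'$, bracket-generation (a local condition preserved under local diffeomorphism), and the covering/local-isometry claims—is a direct patching argument.
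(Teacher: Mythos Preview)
Your proposal is correct and follows essentially the same route as the paper: descend $D$ and $g$ through the covering, handle the time orientation via the timecone consistency forced by $g(\varphi_*T,T)<0$, and establish the $I^+$ identity by projecting and lifting t.f.d.\ curves. The one place where the paper is more explicit is the general time-orientation step: since the local push-forwards $s_{1*}T$ and $s_{2*}T$ do \emph{not} literally agree on overlaps (only their timecones do), they cannot be ``glued'' as vector fields but must be combined by a partition of unity, which the paper formalizes by first passing to a set-valued time orientation and then recovering a vector field from it.
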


\begin{proof}
The proof that $M/G$ is a smooth manifold and $\pi$ is a covering map can be found in any textbook on differential topology. Take $q\in M/G$ and $p\in\pi\inv(q)$. Then we define $D_q = \{v\in T_q(M/G)\:|\:\exists\ V\in D_p:\pi_*V=v\}$. As the action of $G$ preserves the distribution, $D_q$ is defined independently of the choice of $p\in\pi\inv(q)$. As the commutators of $\pi$-related vector-fields are $\pi$-related and $\pi$ is a local diffeomorphism, the distribution on $M/G$ is bracket-generating.

Now take $q\in M/G$ and $v,w\in T_q(M/G)$. We define $g_q(v,w):=g_p(V,W)$, where $p\in\pi\inv(q)$, $V,W\in T_pM$ and $\pi_*V=v$, $\pi_*W=w$. As $\pi$ is an isomorphism, $V$ and $W$ are uniquely defined, and as the action of $G$ preserves the scalar product, the value we get is independent of the choice of $p$. This gives a well-defined sub-Lorentzian metric on $M/G$.

The construction of a time orientation on $M/G$ is a little subtler. If all isometries $\Phi\in G$ satisfy $\Phi_*T = T$ (as is the case in our examples), we can set $T^{M/G} = \pi_*T$. If $G$ is finite, we choose $T^{M/G}_q = \sum_{p\in \pi\inv(q)}\pi_*T_p$. 

In the general case in order to define time orientation $T^{M/G}$ it is preferable to work with an alternative definition of a time-orientation. A time orientation $\tilde T$ is a set-valued map $\tilde T\colon M\to \mathscr{P}(TM)$ to the power set $\mathscr{P}(TM)$ of $TM$ such that
\begin{enumerate}
\item $\tilde T(p)\subset T_pM$ is connected,
\item the set of timelike vectors in $T_pM$ equals $\tilde T(p)\cup -\tilde T(p)$, and
\item $\tilde T$ is continuous in the sense that if $U\subset M$ is open and connected and a vector  field $X$ is timelike on $U$, then either $X_p\in \tilde T(p)$ or $X_p\in -\tilde T(p)$ for all $p\in U$.
\end{enumerate}

So a time orientation is a continuous choice of a time cone in the tangent space as the future time cone. Clearly, a time orientation $T$ by a vector field defines a time orientation in the later sense by choosing $\tilde T(p) = \{v\in T_pM\:|\:g(v,v)<0,\ g(v,T)<0\}$, but also a time orientation map defines a time orienting vector field, as we can define local time orientation vector fields on charts and piece them together with a smooth partition of unity. Owing to the fact that we always take our local time orientation vector fields from the same time cone, their convex combination by the partition of unity will give a global timelike vector field, since time cones are convex in the sense of subsets of vector spaces.

A time orientation in the set-valued sense on $M/G$ is defined by $$\tilde T(q) = \{v\in T_q(M/G) \:|\:g_q(v,v)<0,\ g_p(V,T_p)<0\}$$ for any $p\in \pi\inv(q)$ and $V\in T_pM$ such that $\pi_*V = v$. This is well defined by the property $g(\Phi_*T,T)<0$ for all $\Phi\in G$.

So there is a canonical well-defined sub-Lorentzian structure on $M/G$ and the quotient map $\pi$ is a local isometry by construction. The properties of a curve of being timelike or future directed are entirely local, so under the map $\pi$ t.f.d.\ curves lift and project to t.f.d.\ curves. This implies the identity of the chronological future sets.
\end{proof}

The Examples~\ref{specific example1} and~\ref{Heisenberg} lead to the consideration of a special type of sub-space-times. 

\begin{definition}\label{def:precausal}
A sub-space-time in which $I^\pm(p)$ are open for all $p\in M$ is called chronologically open. 
\end{definition}
Except of some special cases, such as the Minkowski space, in general $J^+(p)$ and $J^-(p)$ are not closed for Lorentzian manifolds. For example, if the point $(1,1)$ in two dimensional Minkowski space is removed, then $J^+(0,0)$ and $J^-(2,2)$ are not closed. The most what is known up to now is the following.

\begin{proposition}~\cite{G1}\label{properties of sets}
Let $(M,D,g,T)$ be a sub-space-time, $p\in M$ and $U$ a convex neighbourhood of $p$. Then 
\begin{enumerate} 
\item $\overline{\mathrm{int}(I^+(p,U))}^U = J^+(p,U)$. In particular, $\mathrm{int}\:(I^+(p,U)) \neq\emptyset$ and $J^\pm(p,U)$ is closed in $U$. It holds globally that $J^+(p)\subset\overline{\mathrm{int}(I^+(p))}$.
\item $\mathrm{int}(I^+(p,U)) = \mathrm{int}(J^+(p,U))$ and $\mathrm{int}(I^+(p)) = \mathrm{int}(J^+(p))$.
\item $\tilde\partial I^+(p,U) = \tilde\partial J^+(p,U)$ and $\partial I^+(p) = \partial J^+(p)$.
\end{enumerate}
Here $\overline{\:A\:}^U$ is the closure of $A$ relative to $U$ and $\tilde\partial A$ is the boundary of $A$ relative to $U$. 
\end{proposition}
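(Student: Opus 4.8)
The plan is to prove the $U$-relative statements first and then obtain the global ones by localizing along causal curves. Inside the convex neighbourhood $U$ we have, by definition, an extension $\tilde g$ of $g$ together with coordinates and a $\tilde g$-orthonormal horizontal frame $\{T,X_1,\dots,X_d\}$ on an open $V\supset\overline U$, $T$ being a positive rescaling of the time orientation. Writing an admissible curve as $\dot\gamma=\alpha T+\sum_i\beta_iX_i$, it is future directed iff $\alpha>0$ a.e., causal iff moreover $\alpha^2\ge\sum_i\beta_i^2$, and timelike iff $\alpha^2>\sum_i\beta_i^2$; hence $J^+(p,U)$ and $I^+(p,U)$ are exactly the sets attainable from $p$ within $U$ for the control system $\dot x=u_0T(x)+\sum_iu_iX_i(x)$ with controls in the future causal cone, resp.\ in the open future timelike cone. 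Since $u=(1,0,\dots,0)$ and $u=(1,0,\dots,\tfrac12,\dots,0)$ are admissible timelike controls, $T$ and $T+\tfrac12X_i$, hence every $X_i$, lie in the Lie algebra generated by the timelike control vector fields, so the bracket-generating hypothesis supplies the Lie algebra rank condition. Krener's theorem then gives the first ingredient: \emph{(i)} $\mathrm{int}\,I^\pm(p,U)\neq\emptyset$ and $I^\pm(p,U)\subset\overline{\mathrm{int}\,I^\pm(p,U)}^U$.

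The other two ingredients are elementary. \emph{(ii)} $J^\pm(p,U)$ is closed in $U$: given $q_n\to q$ with causal admissible curves $\gamma_n$ from $p$ to $q_n$ inside $\overline U$, reparametrize them to $[0,1]$; by Proposition~\ref{Lipschitz} the family is equi-Lipschitz for an auxiliary Riemannian metric, so by equicontinuity and compactness of $\overline U$ a subsequence converges in the $C^0$-topology on curves to a curve from $p$ to $q$, which is horizontal nspc.f.d.\ by Theorem~\ref{lem c0} (convex neighbourhoods are strongly causal); hence $q\in J^+(p,U)$. \emph{(iii)} (``opening up'') given a causal admissible curve $\dot\gamma=\alpha T+\sum\beta_iX_i$ from $p$ to $q$ in $U$, the solution $\gamma_\varepsilon$ of $\dot\gamma_\varepsilon=(1+\varepsilon)\alpha T+\sum\beta_iX_i$ with $\gamma_\varepsilon(0)=p$ is timelike future directed, since $(1+\varepsilon)^2\alpha^2>\alpha^2\ge\sum\beta_i^2$, stays in $U$ for small $\varepsilon$, and satisfies $\gamma_\varepsilon(1)\to q$ by continuous dependence of solutions on $\varepsilon$; thus $J^+(p,U)\subset\overline{I^+(p,U)}^U$. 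The same computation done chartwise over a finite subcover yields the global form $J^+(p)\subset\overline{I^+(p)}$.

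Part 1 would then drop out: $I^+(p,U)\subset J^+(p,U)$ with \emph{(ii)} gives $\overline{I^+(p,U)}^U\subset J^+(p,U)$, \emph{(iii)} gives the reverse inclusion, so $\overline{I^+(p,U)}^U=J^+(p,U)$, which together with \emph{(i)} upgrades to $\overline{\mathrm{int}\,I^+(p,U)}^U=J^+(p,U)$; in particular $\mathrm{int}\,I^+(p,U)\neq\emptyset$ and $J^\pm(p,U)$ is closed in $U$. For the global inclusion $J^+(p)\subset\overline{\mathrm{int}\,I^+(p)}$ I would combine $J^+(p)\subset\overline{I^+(p)}$ from \emph{(iii)} with $I^+(p)\subset\overline{\mathrm{int}\,I^+(p)}$, the latter obtained by taking $q\in I^+(p)$ with a timelike curve $\sigma$ from $p$ to $q$, a convex neighbourhood $U$ of $q$ and $t_0$ with $\sigma([t_0,1])\subset U$: by the local Part 1, $q\in\overline{\mathrm{int}\,I^+(\sigma(t_0),U)}$, and every point of $\mathrm{int}\,I^+(\sigma(t_0),U)$ lies in $\mathrm{int}\,I^+(p)$ because $p\ll\sigma(t_0)$ and $\ll$ is transitive.

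The crux is Part 2, for which it suffices to prove $\mathrm{int}\,J^+(p,U)\subset I^+(p,U)$ — the former set being open, it then lies inside $\mathrm{int}\,I^+(p,U)$, and the reverse inclusion is immediate. Given $q\in\mathrm{int}\,J^+(p,U)$, I would flow backwards along $T$: for small $s>0$ the point $q_{-s}=\Phi^T_{-s}(q)$ still lies in the open set $W:=\mathrm{int}\,J^+(p,U)$ and satisfies $q_{-s}\ll q$ in $U$ along the $T$-flow line. Since $W\subset J^+(p,U)=\overline{I^+(p,U)}^U$ by Part 1 and $W$ is open, $I^+(p,U)$ is dense in $W$; applying the time-reversed Part 1 at $q$ gives $q_{-s}\in\overline{\mathrm{int}\,I^-(q,U)}^U$, so $\mathrm{int}\,I^-(q,U)\cap W$ is a nonempty open set and hence meets $I^+(p,U)$. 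Any $q'\in I^+(p,U)\cap\mathrm{int}\,I^-(q,U)$ then satisfies $p\ll q'\ll q$ via horizontal timelike future directed curves in $U$, and transitivity of $\ll$ gives $p\ll q$, i.e.\ $q\in I^+(p,U)$. The global identity $\mathrm{int}\,I^+(p)=\mathrm{int}\,J^+(p)$ is proved in the same way, realizing $q_{-s}\in\overline{\mathrm{int}\,I^-(q)}$ inside a convex neighbourhood of $q$ and using the global Part 1 for the density of $I^+(p)$ near $q_{-s}$. Part 3 is then bookkeeping: $\tilde\partial I^+(p,U)=\overline{I^+(p,U)}^U\setminus\mathrm{int}\,I^+(p,U)=J^+(p,U)\setminus\mathrm{int}\,J^+(p,U)=\tilde\partial J^+(p,U)$ by Parts 1 and 2, and globally $\overline{I^+(p)}=\overline{J^+(p)}$ — because $J^+(p)\subset\overline{I^+(p)}\subset\overline{J^+(p)}$ — together with Part 2 gives $\partial I^+(p)=\partial J^+(p)$. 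I expect the one genuinely delicate point to be the step in Part 2 that turns ``$q$ is a limit of chronologically reachable points'' into ``$q$ is itself chronologically reachable'': this cannot be done by perturbing a single causal curve, since rigid curves obstruct that (cf.\ Example~\ref{specific example1}), and it forces the sandwiching of $q$ between the dense set $I^+(p,U)$ and the solid set $\mathrm{int}\,I^-(q,U)$ just to its past, followed by a splice of two chronological curves — legitimate precisely because $\ll$, unlike the mixed relation in the first part of Proposition~\ref{transitiv}, really is transitive on a sub-space-time.
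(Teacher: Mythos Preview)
The paper does not prove this proposition at all; it is quoted from Grochowski~\cite{G1} and immediately followed by the next subsection. So there is no in-paper argument to compare against. Your write-up is essentially a reconstruction of Grochowski's control-theoretic proof, and it is correct: Krener's accessibility theorem for the density statement $I^\pm(p,U)\subset\overline{\mathrm{int}\,I^\pm(p,U)}^U$, Arzel\`a--Ascoli together with Theorem~\ref{lem c0} for the closedness of $J^\pm(p,U)$ in $U$, and the ``opening-up'' perturbation $(\alpha,\beta)\mapsto((1+\varepsilon)\alpha,\beta)$ for $J^+(p,U)\subset\overline{I^+(p,U)}^U$. Your identification of the genuine difficulty in Part~2 --- that one cannot simply vary a single causal curve because of rigidity, and must instead sandwich $q$ between the dense set $I^+(p,U)$ and the solid set $\mathrm{int}\,I^-(q,U)$ --- is exactly the point of the argument and you carry it out cleanly.

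Two places deserve a word more than you give them. First, when you globalize ingredient~(iii) by saying ``the same computation done chartwise over a finite subcover,'' you are implicitly doing an induction along a partition $0=s_0<\dots<s_N=1$ with $\gamma([s_{k-1},s_k])\subset U_k$: at each step you solve the perturbed system in the frame of $U_k$ with initial condition the (already perturbed) endpoint from the previous step, and you need continuous dependence on \emph{both} $\varepsilon$ and the initial condition to keep the curve inside $U_k$ and its endpoint close to $\gamma(s_k)$. The causal character $-(1+\varepsilon)^2\alpha^2+\sum\beta_i^2<0$ is unaffected by the shift of initial point, so the concatenated curve is globally timelike. This is routine but not literally ``the same computation.'' Second, your appeal to Theorem~\ref{lem c0} in ingredient~(ii) requires $U$ to be strongly causal as a sub-space-time; this holds because $U$ is strongly causal for the Lorentzian extension $\tilde g$ (convex with compact closure), and any horizontal nspc.f.d.\ curve is in particular $\tilde g$-nspc.f.d., so a horizontal curve leaving and returning to a causally convex $V\subset U$ would contradict Lorentzian strong causality. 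With these two points made explicit your argument is complete.
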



\subsection{The Alexandrov Topology}


\begin{lemma}\label{precausal sub-space-time}
In a chronologically open sub-space-time $(M,D,g,T)$ the set
\[\mathscr{B} = \{I^+(p)\cap I^-(q)\:|\:p,q\in M\}\]
is the basis of a topology.
\end{lemma}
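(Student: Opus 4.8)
The plan is to verify the standard criterion for a collection of subsets to be a basis of a topology: that the collection covers $M$, and that for any two members and any point in their intersection, there is a member of the collection containing that point and contained in the intersection. Write $\mathscr{B} = \{I^+(p)\cap I^-(q) \mid p,q\in M\}$.

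\emph{Covering $M$.} First I would check that every point $r\in M$ lies in some element of $\mathscr{B}$. Since the distribution $D$ is bracket-generating and $T$ is a horizontal timelike vector field, the integral curve of $T$ through $r$ is a horizontal timelike curve; following it backwards to a point $p$ and forwards to a point $q$ gives $p\ll r\ll q$, so $r\in I^+(p)\cap I^-(q)\in\mathscr{B}$. (Alternatively one may simply take $p$ slightly in the past and $q$ slightly in the future along this curve.) Hence $\bigcup\mathscr{B} = M$.

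\emph{Intersection property.} Now suppose $r\in \big(I^+(p_1)\cap I^-(q_1)\big)\cap\big(I^+(p_2)\cap I^-(q_2)\big)$. I want a single basis element $I^+(p)\cap I^-(q)$ with $r\in I^+(p)\cap I^-(q)\subset I^+(p_1)\cap I^-(q_1)\cap I^+(p_2)\cap I^-(q_2)$. The natural candidate is $p=r$-shifted-into-the-past and $q=r$-shifted-into-the-future, i.e.\ pick points $p\in I^+(p_1)\cap I^+(p_2)$ with $p\ll r$ and $q\in I^-(q_1)\cap I^-(q_2)$ with $r\ll q$. To produce such a $p$: here is exactly where chronological openness enters. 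Since the sub-space-time is chronologically open, $I^+(p_1)$ and $I^+(p_2)$ are open, and both contain $r$; their intersection is an open neighbourhood of $r$. Taking the past-directed integral curve of $T$ through $r$, for small parameter the curve stays inside this open neighbourhood, so there is a point $p$ on it with $p\in I^+(p_1)\cap I^+(p_2)$ and $p\ll r$ (the latter because the arc from $p$ to $r$ is timelike future directed). Symmetrically, using that $I^-(q_1)$ and $I^-(q_2)$ are open and contain $r$, one finds $q$ with $r\ll q$ and $q\in I^-(q_1)\cap I^-(q_2)$.

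\emph{Verifying the inclusion.} It remains to check $I^+(p)\cap I^-(q)\subset \bigcap_{i=1,2}\big(I^+(p_i)\cap I^-(q_i)\big)$, and that $r$ lies in $I^+(p)\cap I^-(q)$. The latter is immediate: $p\ll r\ll q$. For the former, take any $s\in I^+(p)\cap I^-(q)$, so $p\ll s\ll q$. Since $p_i\ll p$ (because $p\in I^+(p_i)$) and $p\ll s$, transitivity of $\ll$ gives $p_i\ll s$, i.e.\ $s\in I^+(p_i)$; similarly $s\ll q$ and $q\ll q_i$ give $s\in I^-(q_i)$. Hence $s$ lies in all four sets, as required. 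This completes the verification that $\mathscr{B}$ is a basis.

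\emph{Main obstacle.} The one genuinely delicate point is the construction of the auxiliary points $p$ and $q$: one needs $p$ to lie simultaneously in the past of $r$ \emph{and} inside the open set $I^+(p_1)\cap I^+(p_2)$, and the only tool guaranteeing that a whole neighbourhood of $r$ is available is the chronological openness hypothesis — without it $I^+(p_i)$ need not be open (Example~\ref{specific example1}) and $r$ could sit on the boundary, with no past point of $r$ lying in $I^+(p_i)$, so the argument would break down. Everything else is a routine application of transitivity of $\ll$.
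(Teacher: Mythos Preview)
Your proof is correct and follows essentially the same approach as the paper: both use chronological openness to ensure the intersection of basis elements is a manifold-open neighbourhood of the point, then run a t.f.d.\ curve through it to produce nearby past and future points, and conclude via transitivity of $\ll$. The only difference is cosmetic---the paper handles the past and future constructions in one stroke by taking a single t.f.d.\ curve inside the full intersection $B_1\cap B_2$, whereas you treat the past and future sides separately.
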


\begin{proof}
We have to check two requirements:
\begin{enumerate}
\item For each $p\in M$ there is a set $B\in \mathscr{B}$ such that $p\in B$.
\item If $p\in B_1\cap B_2$ then there is $B_3$ such that $p\in B_3 \subset B_1\cap B_2$, where $B_1,B_2,B_3\in\mathscr {B}$.
\end{enumerate}

To show the first statement we take a t.f.d.\ horizontal curve $\gamma\colon (-1,1)\to M$ such that $\gamma(0)=p$. Then $p\in I^+(\gamma(-1))\cap I^-(\gamma(1))$.

For the second statement we denote by $B_1=(I^+(q)\cap I^-(r))$, $B_2=(I^+(q')\cap I^-(r'))$ and chose a point $p\in B_1\cap B_2$. The set $B_1\cap B_2$ is an open neighbourhood of $p$ in the manifold topology. Thus there is a t.f.d.\ curve $\gamma\colon (-2\epsilon,2\epsilon)\to B_1\cap B_2$ with $\gamma(0)=p$. 
Then we have
\[r,r'\ll \gamma(-\epsilon)\ll p\ll \gamma(\epsilon) \ll q,q',
\]
or, in other words $p\in I^+(\gamma(-\epsilon))\cap I^-(\gamma(\epsilon)):=B_3\subset B_1\cap B_2$.
\end{proof}

\begin{definition}
Let $(M,D,g,T)$ be a sub-space-time. The Alexandrov topology $\mathcal A$ on $M$ is the topology generated by the subbasis $\mathscr{S}=\left\{I^+(p), I^-(p)\:|\:p\in M\right\}$.
\end{definition}
 Lemma~\ref{precausal sub-space-time} implies that for chronologically open space-times the set
\[\mathscr{B} = \{I^+(p)\cap I^-(q)\:|\:p,q\in M\}
\]
is a basis of the Alexandrov topology $\mathcal A$.
\begin{remark}
Note that, while we need a time orientation $T$ to define the Alexandrov topology, $\mathscr{A}$ is independent of $T$. A choice of a different time-orientation can at most reverse future and past orientation of the manifold. Note also, that since $I^+(p)$, $I^-(p)$ are open for space-times, the manifold topology is always finer than the Alexandrov topology. This is only true for chronologically open sub-space-times in the sub-Lorentzian case.
\end{remark}


\subsection{Links to Causality}


The definition of the Alexandrov topology suggests a link between the Alexandrov topology and causal structure of a sub-space-time. The following theorem generalizes a well known fact from the Lorentzian geometry.

\begin{theorem}\label{compact spacetime}
Every sub-space-time compact in the Alexandrov topology (in particular every compact space-time) $M$ fails to be  chronological.
\end{theorem}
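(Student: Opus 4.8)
The plan is to mimic the classical Lorentzian argument: extract an open cover of $M$ by Alexandrov-basic sets, pass to a finite subcover using compactness in $\mathscr A$, and then use a pigeonhole/maximality argument on the finite subcover to produce a point $p$ with $p\ll p$. First I would observe that for every $p\in M$ we can choose, as in the proof of Lemma~\ref{precausal sub-space-time}, a t.f.d.\ horizontal curve through $p$, hence a set $I^+(a_p)\cap I^-(b_p)$ containing $p$; these sets are open in $\mathscr A$ by definition of the Alexandrov topology (they are finite intersections of subbasis elements), so they form an $\mathscr A$-open cover of $M$. Compactness in the Alexandrov topology then yields finitely many points $a_1,b_1,\dots,a_N,b_N$ with $M=\bigcup_{i=1}^N \big(I^+(a_i)\cap I^-(b_i)\big)$.

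Next I would run the standard maximality argument. Since $a_1\in M$, it lies in some $I^+(a_{i_1})\cap I^-(b_{i_1})$, so $a_{i_1}\ll a_1$; then $a_{i_1}$ lies in some $I^+(a_{i_2})\cap I^-(b_{i_2})$, giving $a_{i_2}\ll a_{i_1}$, and so on, producing a chain $\cdots \ll a_{i_3}\ll a_{i_2}\ll a_{i_1}\ll a_1$ among the finite index set $\{1,\dots,N\}$. By pigeonhole some index repeats, and transitivity of $\ll$ (which holds in sub-space-times) closes the chain into $a_{i_k}\ll a_{i_k}$ for some $k$, so $M$ is not chronological. An alternative phrasing: the relation ``$q\ll p$ for $p$ ranging over the chosen base points $a_i$'' defines, via the finite subcover, a function from $\{a_1,\dots,a_N\}$ into itself (send $a_i$ to the $a_j$ with $a_i\in I^+(a_j)\cap I^-(b_j)$), and iterating a self-map of a finite set must cycle.

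The step I expect to require the most care is making sure the argument genuinely only uses \emph{Alexandrov}-compactness and not manifold-compactness. The sets $I^+(a_i)\cap I^-(b_i)$ are automatically in $\mathscr A$, so covering $M$ by them and extracting a finite subcover is legitimate even when $\mathscr A$ is strictly coarser than the manifold topology and even when the $I^\pm$ are not manifold-open (i.e.\ without assuming the sub-space-time is chronologically open). One subtlety is that Lemma~\ref{precausal sub-space-time}'s construction of the curve through $p$ was stated in the chronologically open setting, but the only thing needed here is the existence through every point of one t.f.d.\ horizontal curve defined on an open interval, which is immediate (e.g.\ flow a short time along the time-orientation field $T$, or invoke that $T$ is horizontal timelike); this gives $p\in I^+(\gamma(-\eps))\cap I^-(\gamma(\eps))$ with no openness hypothesis. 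The parenthetical ``in particular every compact space-time'' then follows since for an honest space-time the $I^\pm(p)$ are manifold-open, so manifold-compactness implies $\mathscr A$-compactness, and the claim reduces to the case just proved.
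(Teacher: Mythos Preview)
Your argument is correct and is essentially the same as the paper's: cover $M$ by Alexandrov-open sets, extract a finite subcover, and use pigeonhole plus transitivity of $\ll$ to produce a closed timelike chain. The only difference is that the paper uses the simpler subbasis cover $\{I^+(p)\:|\:p\in M\}$ rather than the basis sets $I^+(a_p)\cap I^-(b_p)$, which makes the chasing step marginally cleaner (no $b_i$'s to carry along), but the logic is identical.
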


\begin{proof}
Cover the manifold $M$ by $\mathscr{U} = \{I^+(p)\:|\:p\in M\}$. Then we can extract a final subcover $I^+(p_1),...,I^+(p_n)$ of $M$. Since these sets cover $M$, for any index $k$ with $1\leq k\leq n$ there exists $i(k)$, $1\leq i(k)\leq n$ such that $p_k\in I^+(p_{i(k)})$. It gives an infinite sequence $p_1, p_{i(1)}, p_{i(i(1))},...$ containing only finitely many elements. Thus there exists $k$, $1\leq k\leq n$, such that $p_k$ appears more than once in the sequence. This means that $p_k \ll p_k$ by transitivity of the relation $\ll$.
\end{proof}

\begin{proposition}\label{spacetime Hausdorff}{\cite{BEE96, Penr}}
For any space-time $(M,g,T)$ the following are equivalent
\begin{enumerate}
\item \label{itm Hausdorff} The Alexandrov topology on $M$ is Hausdorff.
\item \label{itm strongly causal} The space-time $M$ is strongly causal.
\item \label{itm manifold}The Alexandrov topology is finer than the manifold topology.
\end{enumerate}
\end{proposition}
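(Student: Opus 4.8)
The plan is to establish the cycle of implications $(\ref{itm strongly causal})\Rightarrow(\ref{itm manifold})\Rightarrow(\ref{itm Hausdorff})\Rightarrow(\ref{itm strongly causal})$, which is the classical Lorentzian argument (see~\cite{BEE96, Penr}); I sketch each arrow and flag the one genuinely delicate point. Throughout recall that in a space-time the sets $I^\pm(p)$ are open in the manifold topology, so the Alexandrov topology is always \emph{coarser} than the manifold topology; the content of the statement is when the two coincide, and that this is equivalent both to the Hausdorff property of $\mathcal A$ and to strong causality.

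\emph{$(\ref{itm strongly causal})\Rightarrow(\ref{itm manifold})$.} Since $\mathcal A$ is generated by the subbasis $\{I^\pm(p)\}$ and these are manifold-open, it suffices to show every manifold-open $U$ is $\mathcal A$-open, i.e.\ for $p\in U$ to find $q,r$ with $p\in I^+(q)\cap I^-(r)\subset U$. Shrink $U$ to a convex neighbourhood and use strong causality to pick a causally convex $V\subset U$ containing $p$. Choose a horizontal t.f.d.\ curve $\gamma\colon(-\eps,\eps)\to V$ through $p=\gamma(0)$ and set $q=\gamma(-\eps/2)$, $r=\gamma(\eps/2)$, so $p\in I^+(q)\cap I^-(r)$. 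If $x\in I^+(q)\cap I^-(r)$, then $q\ll x\ll r$; concatenating the connecting causal curves with the arc of $\gamma$ from $r$ to $q$ would give a nspc.\ curve returning to $V$ after leaving it unless it never leaves $V$, forcing the whole chain — in particular $x$ — to stay in $V\subset U$. (Here one must be slightly careful: the standard argument uses that the connecting t.f.d.\ curves can be taken to lie in a fixed convex neighbourhood; this is where Proposition~\ref{properties of sets} and the convexity set-up from the excerpt are invoked.) Hence $I^+(q)\cap I^-(r)\subset U$.

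\emph{$(\ref{itm manifold})\Rightarrow(\ref{itm Hausdorff})$.} If $\mathcal A$ is finer than the manifold topology it contains a Hausdorff topology, hence is itself Hausdorff: given $p\neq q$ choose disjoint manifold-open sets around them, which are then $\mathcal A$-open.

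\emph{$(\ref{itm Hausdorff})\Rightarrow(\ref{itm strongly causal})$.} This is the contrapositive direction and the main obstacle. Suppose strong causality fails at $p$: there is a neighbourhood $U$ of $p$ such that every smaller neighbourhood is re-entered by some nspc.\ curve. One produces a sequence of nspc.f.d.\ curves $\gamma_n$ leaving and returning to shrinking neighbourhoods of $p$, with endpoints converging to $p$; using the limit curve machinery (Theorem~\ref{lem c0} together with Proposition~\ref{Lipschitz} and Corollary~\ref{regularity}) one extracts a limiting nspc.f.d.\ curve, or a broken such curve, witnessing that every $\mathcal A$-basic neighbourhood $I^+(a)\cap I^-(b)$ of $p$ also contains a second point $p'$ on the limit configuration that cannot be separated from $p$ in $\mathcal A$ — concretely, one shows $p$ and $p'$ lie in exactly the same sets $I^\pm(\cdot)$, so no two $\mathcal A$-open sets separate them. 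The care needed here is in organising the limit-curve extraction so that the offending point $p'$ is genuinely distinct from $p$ yet chronologically indistinguishable from it; this uses that the failure of strong causality is \emph{persistent} under shrinking the neighbourhood. Since the excerpt has already set up all the limit-curve tools for sub-space-times, the Lorentzian proof transfers essentially verbatim, and for the present statement (which concerns space-times, not sub-space-times) one may also simply cite~\cite{BEE96, Penr}.
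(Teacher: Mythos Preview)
The paper does not actually give its own proof of this proposition: it is stated with a citation to \cite{BEE96,Penr} and left unproved, the surrounding text only remarking that the classical argument relies on openness of $I^\pm$ and the strong transitivity property of Proposition~\ref{transitiv}. So there is no ``paper's proof'' to compare against in the strict sense. Your sketch is the standard cycle from the cited references, and the parts the paper does reprove in the sub-Lorentzian setting --- the implication $(\ref{itm strongly causal})\Rightarrow(\ref{itm manifold})$ in Corollary~\ref{Alexandrov Hausdorff} and the Hausdorff-implies-strongly-causal direction in Theorem~\ref{th: cc-transitive} --- follow exactly the lines you indicate, including the appeal to $C^0$-limit curves in place of geodesic limits.

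One small correction to your $(\ref{itm strongly causal})\Rightarrow(\ref{itm manifold})$ step: the business about ``concatenating the connecting causal curves with the arc of $\gamma$ from $r$ to $q$'' is both unnecessary and not well-formed (that concatenation reverses orientation and is not a nspc.\ curve). The clean argument, which is what the paper uses implicitly in the proof of Corollary~\ref{Alexandrov Hausdorff}, is direct: if $x\in I^+(q)\cap I^-(r)$ then there is a single nspc.f.d.\ curve from $q$ through $x$ to $r$; since $q,r\in V$ and $V$ is causally convex, the curve cannot leave $V$ and return, hence $x\in V\subset U$. No detour through $\gamma$ is needed.
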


The proof of Proposition \ref{spacetime Hausdorff} employs quite a few Lorentzian notions, specifically openness of the sets $I^\pm$ and strong transitivity of the causal relations $\leq, \ll$, that do not generally hold in the sub-Lorentzian case. We show that results of Proposition~\ref{spacetime Hausdorff} still hold in chronologically open sub-space-times, see also Theorem~\ref{th: cc-transitive}.

\begin{lemma}\label{lem boundary}
Let $(M, D,g,T)$ be a sub-space-time, $p\in M$ and $q\in\partial J^+(p)\cap J^+(p)$, $p\neq q$. Then any nspc.f.d.\ curve $\gamma$ from $p$ to $q$ is totally contained in $\partial J^+(p)$. If $\gamma$ is a nspc.f.d.\ curve with $\gamma(\theta) \in \mathrm{int}(J^+(p))$ then $\gamma(t)\in \mathrm{int}(J^+(p))$ for all $t\geq \theta$.
\end{lemma}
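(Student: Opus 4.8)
The plan is to use the characterization of $\partial J^+(p)$ in terms of the interior of $I^+(p)$ from Proposition~\ref{properties of sets}, together with the stronger transitivity property in Proposition~\ref{transitiv}, applied to an extension $\tilde g$ of the sub-Lorentzian metric. First I would prove the second assertion, since the first will follow from it by contraposition. So suppose $\gamma$ is nspc.f.d.\ and $\gamma(\theta)\in\mathrm{int}(J^+(p))$. By part (2) of Proposition~\ref{properties of sets} we have $\mathrm{int}(J^+(p)) = \mathrm{int}(I^+(p))$, and since $p\leq\gamma(\theta)$ and $\gamma(\theta)\in I^+(p)$ we get $p\ll\gamma(\theta)$. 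For $t\geq\theta$ the sub-curve of $\gamma$ from $\gamma(\theta)$ to $\gamma(t)$ is nspc.f.d., hence $\gamma(\theta)\leq\gamma(t)$, and by part~1) of Proposition~\ref{transitiv} (valid here because we may read the relations off in the space-time $(M,\tilde g,T)$, where horizontal nspc.f.d.\ curves remain nspc.f.d.) we obtain $p\ll\gamma(t)$, i.e.\ $\gamma(t)\in I^+(p)\subset\overline{\mathrm{int}(I^+(p))} = \overline{\mathrm{int}(J^+(p))}$. This alone does not yet give $\gamma(t)\in\mathrm{int}(J^+(p))$, so a little more care is needed here; see the next paragraph.

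To upgrade $\gamma(t)\in I^+(p)$ to $\gamma(t)\in\mathrm{int}(J^+(p))$, I would argue locally: pick a convex neighbourhood $U$ of $\gamma(t)$. Since $p\ll\gamma(t)$ there is a t.f.d.\ curve from $p$ to $\gamma(t)$; its final portion lies in $U$, so there is a point $r\in I^-(\gamma(t),U)$ with $p\ll r$. Then $I^+(r,U)$ is a neighbourhood of $\gamma(t)$ (here one uses that in the \emph{convex} set $U$, which is a normal neighbourhood, the relative chronological future of an interior point is open — this is the Lorentzian fact $I^+(r,U)=\exp_r(I^+(0)\cap V_r)$ quoted in the excerpt, applied to the extension $\tilde g$), and $I^+(r,U)\subset I^+(r)\subset I^+(p)\subset J^+(p)$ by transitivity. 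Hence $\gamma(t)\in\mathrm{int}(J^+(p))$, as desired.

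For the first assertion, let $q\in\partial J^+(p)\cap J^+(p)$ with $q\neq p$, and let $\gamma\colon[0,1]\to M$ be any nspc.f.d.\ curve from $p$ to $q$. If some $\gamma(\theta)$ were in $\mathrm{int}(J^+(p))$, then by the second assertion already proved we would get $q=\gamma(1)\in\mathrm{int}(J^+(p))$, contradicting $q\in\partial J^+(p)$ and the fact that $J^+(p)$ being locally closed near $q$ (Proposition~\ref{properties of sets}, part~1, applied in a convex neighbourhood of $q$) means $\partial J^+(p)\cap\mathrm{int}(J^+(p))=\emptyset$. Therefore $\gamma(t)\notin\mathrm{int}(J^+(p))$ for all $t$; since $\gamma(t)\in J^+(p)$ always, we conclude $\gamma(t)\in J^+(p)\setminus\mathrm{int}(J^+(p))\subset\partial J^+(p)$ for all $t$.

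The main obstacle I anticipate is the passage from "$\gamma(t)\in I^+(p)$" to "$\gamma(t)\in\mathrm{int}(J^+(p))$": one must be careful that openness of chronological futures is only guaranteed \emph{relative to a convex neighbourhood} in the sub-Lorentzian setting (the global sets $I^\pm(p)$ need not be open, per Example~\ref{specific example1}), so the argument has to be localized and run in the Lorentzian extension $\tilde g$ on that convex neighbourhood, then transported back via Proposition~\ref{properties of sets}. Everything else is a routine application of transitivity.
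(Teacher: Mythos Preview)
Your argument has a genuine gap: it conflates the sub-Lorentzian causal relations $\ll,\leq$ (defined via \emph{horizontal} curves) with the Lorentzian relations $\ll_{\tilde g},\leq_{\tilde g}$ for the extension $\tilde g$. When you invoke Proposition~\ref{transitiv} in the extension, what you obtain is only $p\ll_{\tilde g}\gamma(t)$, i.e.\ the existence of a $\tilde g$-timelike curve from $p$ to $\gamma(t)$; this curve need not be horizontal, so you cannot conclude $p\ll\gamma(t)$ in the sub-Lorentzian sense. The same problem recurs in your local upgrade: the open set $I^+_{\tilde g}(r,U)$ furnished by the Lorentzian exponential map is indeed a neighbourhood of $\gamma(t)$, but the inclusion chain you write terminates in the Lorentzian $I^+_{\tilde g}(p)$, which is in general strictly larger than the sub-Lorentzian $J^+(p)$. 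Hence you have not produced a neighbourhood of $\gamma(t)$ inside the sub-Lorentzian $J^+(p)$. Note also that the paper explicitly warns that Proposition~\ref{transitiv}(1) fails for general sub-space-times (see Example~\ref{weirdstuff1}); its sub-Lorentzian replacement, Theorem~\ref{th:precausal_transitivity}, is proved \emph{using} the present lemma, so appealing to strong transitivity here would be circular.

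The paper avoids this entirely by a direct flow argument: given the controls $u^i$ of $\gamma$ in a local orthonormal frame, it solves the same ODE backwards from nearby initial points $r\notin J^+(p)$, producing nspc.p.d.\ curves $\gamma_r$ which therefore stay outside $J^+(p)$; continuous dependence on initial data then forces the earlier points of $\gamma$ to lie on $\partial J^+(p)$. This argument never leaves the horizontal world and does not rely on any form of strong transitivity, which is precisely the point --- Lemma~\ref{lem boundary} is the tool from which transitivity is later derived.
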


\begin{proof}
Let $\gamma\colon I\to M$ be an absolutely continuous nspc.f.d.\ curve and $p=\gamma(0)$, $q=\gamma(\theta)$. Assume that $q\in \partial J^+(p)$. Let us choose a normal neighbourhood $U$ of $q$ and an orthonormal frame $T,X_1,...,X_d$ on $U$. Then we can expand $\dot\gamma = T + \sum_{i=0}^nu^iX_i$
in that frame, where we fixed the parametrisation.
The theory of ordinary differential equations ensures that there is a neighbourhood $V\subset U$ of $q$ and $\epsilon>0$ such that a unique solution of the Cauchy system
\[
\gamma_r(0) = r, \qquad \dot\gamma_r(t) = -\Big(T(\gamma_r(t))+\sum_{i=0}^nu^i(t)\: X_{i}(\gamma_r(t))\Big),\quad t\in [0,\epsilon],
\]
exists for continuous coefficients $u^i$, for any $r\in V$, and that $\gamma_r(\epsilon)$ depends continuously on $r$. The result can also be extended for $u^i\in L^2([0,\epsilon])$.

 As $q\in\partial J^+(p)$, there is $r\in V\cap (J^+(p))^c$. Since $\gamma_r$ is nspc.p.d., $\gamma_r(t)\notin J^+(p)$ for any $t\in [0,\epsilon]$ either, but as $\gamma_r(t)$ depends continuously on the initial data $r\in J^+(p)^c$, we can choose $r$ such that $\gamma_r(t)$ comes arbitrarily close to $\gamma(\theta-t)$. Thus, if $\gamma(\theta)\in \partial J^+(p)\cap J^+(p)$ for $\theta>0$, then also $\gamma(t)\in \partial J^+(p)\cap J^+(p)$ for $t\in [\theta-\epsilon,\theta]$ for some $\epsilon>0$. This implies $\gamma[0,\theta]\subset  J^+(p)\cap \partial J^+(p)$.
\end{proof}

\begin{theorem}\label{th:precausal_transitivity}
Let $(M,D,g,T)$ be a chronologically open sub-space-time. Then $p\leq q\ll r$ or $p\ll q\leq r$ implies $p\ll r$.
\end{theorem}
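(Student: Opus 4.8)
The plan is to bypass the classical Lorentzian argument, which derives the claim from the ``push--up'' Proposition~\ref{transitiv}(2); that proposition fails for general sub-space-times precisely because of rigid curves like the one in Example~\ref{specific example1}, so the hypothesis that $M$ is chronologically open has to enter in an essential way. First I would reduce to a single implication. Passing from the time orientation $T$ to $-T$ interchanges $I^+$ and $I^-$, hence preserves the property of being chronologically open, and it converts the relation ``$p\leq q\ll r$'' for $(M,D,g,T)$ into ``$r\ll q\leq p$'' for $(M,D,g,-T)$. Consequently it is enough to prove the implication $p\ll q\leq r\ \Rightarrow\ p\ll r$ in every chronologically open sub-space-time, and then to apply it to $(M,D,g,-T)$ to obtain the other case.

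For $p\ll q\leq r$ the idea is to translate the chronological hypothesis at $q$ into a statement about the interior of the causal future of $p$ and then to propagate it along a causal curve. Since $M$ is chronologically open, $I^+(p)$ is open, so $I^+(p)=\mathrm{int}(I^+(p))$, and by Proposition~\ref{properties of sets}(2) this set equals $\mathrm{int}(J^+(p))$; hence $q\in\mathrm{int}(J^+(p))$. Now choose a t.f.d.\ curve from $p$ to $q$ and a nspc.f.d.\ curve from $q$ to $r$ (if $q=r$ there is nothing to prove) and concatenate them into a single nspc.f.d.\ curve $\gamma\colon[0,1]\to M$ with $\gamma(0)=p$, $\gamma(\theta)=q$ for some $\theta\in(0,1]$ and $\gamma(1)=r$. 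By transitivity of $\leq$ the curve $\gamma$ stays in $J^+(p)$, and it meets $\mathrm{int}(J^+(p))$ at the parameter $\theta$, so Lemma~\ref{lem boundary} yields $\gamma(t)\in\mathrm{int}(J^+(p))$ for all $t\geq\theta$; in particular $r=\gamma(1)\in\mathrm{int}(J^+(p))=I^+(p)$, that is, $p\ll r$.

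I expect the only subtle point to be the correct invocation of Lemma~\ref{lem boundary}: one must use it in the form ``a nspc.f.d.\ curve issuing from $p$ which once enters $\mathrm{int}(J^+(p))$ remains there afterwards'', which is exactly its second assertion, established there by the ODE-perturbation argument with no chronological-openness assumption; building $\gamma$ so that it genuinely starts at $p$ removes any ambiguity about the base point. The remaining ingredients are routine: the time-reversal reduction, the identity $\mathrm{int}(I^+(p))=\mathrm{int}(J^+(p))$ from Proposition~\ref{properties of sets}, and the trivial cases $p=q$, $q=r$. It would also be natural to record afterwards that, without the chronologically open hypothesis, the same concatenation still lands $r$ in $J^+(p)$ but not necessarily in $\mathrm{int}(J^+(p))=I^+(p)$, which is the mechanism behind Example~\ref{weirdstuff1}.
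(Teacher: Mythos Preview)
Your proof is correct and follows essentially the same route as the paper's: both use chronological openness together with Proposition~\ref{properties of sets}(2) to place $q$ in $\mathrm{int}(J^+(p))$, invoke the second assertion of Lemma~\ref{lem boundary} on a concatenated nspc.f.d.\ curve from $p$ through $q$ to $r$, and handle the remaining case by reversing the time orientation. The only cosmetic difference is that the paper proves $p\ll q\leq r\Rightarrow p\ll r$ first and then applies time reversal, whereas you do the reduction first.
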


\begin{proof}
Assume $p\ll q\leq r$. Then $q\in I^+(p) = \mathrm{int}(J^+(p))$. Therefore, any causal curve $\gamma$ connecting $p$ to $r$ and passing through $q$ is contained in $\mathrm{int}(J^+(p))$ after crossing $q$ due to Lemma~\ref{lem boundary}. In particular $r\in \mathrm{int}(J^+(p)) = I^+(p)$.

Assume now $p\leq q \ll r$. Reversing the time orientation does not change the Alexandrov topology, so the sub-space-time remains chronologically open. In $(M,D,g,-T)$ we have $r\ll q \leq p$, so by the first step $r\ll p$, and by reversing $T$ again we have $p\ll r$ in $(M,D,g,T)$.
\end{proof}

Note that the proof of Theorem~\ref{th:precausal_transitivity} uses Proposition~\ref{properties of sets} instead of the calculus of variations as in the classical Lorentzian case. It shows that the strong transitivity of causal relations holds not because one can vary curves, but because the system of t.f.d.\ curves has nice properties. Combining the last two results, we state the following summary.

\begin{corollary}\label{Alexandrov Hausdorff}
Let $(M,D,g,T)$ be a sub-space-time. If $M$ is strongly causal, its Alexandrov topology is finer than the manifold topology. If $M$ is chronologically open, the equivalences formulated in Proposition~\ref{spacetime Hausdorff} still hold.
\end{corollary}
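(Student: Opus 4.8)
The plan is to split the corollary into its two assertions and, for the second, to run the equivalence cycle $(2)\Rightarrow(3)\Rightarrow(1)\Rightarrow(2)$ of Proposition~\ref{spacetime Hausdorff}, isolating the single implication that genuinely needs chronological openness. The first assertion will be $(2)\Rightarrow(3)$ and will be proved without any extra hypothesis; $(3)\Rightarrow(1)$ is soft; and $(1)\Rightarrow(2)$ is where chronological openness (via openness of $I^\pm$ and Theorem~\ref{th:precausal_transitivity}) does the work.

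For the first assertion I would argue directly that every manifold-open set is open in $\mathcal A$. Fix $p\in M$ and a manifold-open neighbourhood $W$ of $p$. Strong causality supplies a causally convex open set $V$ with $p\in V\subset W$. The integral curve of the time orientation $T$ through $p$ is a horizontal t.f.d.\ curve; restricting it to a short symmetric interval around $0$ and letting $a,b\in V$ be its past and future endpoints gives $a\ll p\ll b$, so $p\in I^+(a)\cap I^-(b)$, and this set is open in $\mathcal A$ since $I^+(a),I^-(b)$ lie in the defining subbasis $\mathscr S$. If $x\in I^+(a)\cap I^-(b)$, concatenating a t.f.d.\ curve from $a$ to $x$ with one from $x$ to $b$ produces an nspc.f.d.\ curve from $a\in V$ to $b\in V$; were $x$ outside $V$ this curve would leave $V$ and return, contradicting causal convexity. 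Hence $p\in I^+(a)\cap I^-(b)\subset V\subset W$, so $W$ is an $\mathcal A$-neighbourhood of each of its points and $W\in\mathcal A$. Note this uses neither chronological openness nor openness of $I^\pm$.

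For the second assertion, $(2)\Rightarrow(3)$ is exactly the first assertion, and $(3)\Rightarrow(1)$ is immediate: the manifold topology is Hausdorff and any finer topology is again Hausdorff, as the separating open sets remain open. The real content is $(1)\Rightarrow(2)$, and here the plan is to transcribe the classical Lorentzian argument, whose two structural ingredients are now available for chronologically open sub-space-times, namely openness of the sets $I^\pm(p)$ (the hypothesis) and the strong transitivity $p\leq q\ll r$ or $p\ll q\leq r\Rightarrow p\ll r$ (Theorem~\ref{th:precausal_transitivity}). Arguing by contraposition, suppose $M$ fails to be strongly causal at some $p$, fix a convex neighbourhood $U$ of $p$ (which has compact closure) no sub-neighbourhood of which is causally convex, reduce to causal curves that actually exit a fixed such $U$, and extract by a limit-curve argument a point $q\in\partial U$, $q\neq p$, accumulated by causal curves that pass near $p$, leave $U$, and return near $p$. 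The limit-curve step is legitimate because a convex neighbourhood is itself strongly causal, so Theorem~\ref{lem c0} applies to horizontal nspc.f.d.\ curves inside $U$. Then openness of $I^\pm$ together with strong transitivity let one chain these curves to show that every $\mathcal A$-basic neighbourhood $I^+(a)\cap I^-(b)$ of $p$ meets every $I^+(c)\cap I^-(d)$ of $q$, so $\mathcal A$ is not Hausdorff.

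I expect $(1)\Rightarrow(2)$ to be the main obstacle: one must carry out the limit-curve construction of the accumulation point $q$ and verify the inseparability of $p$ and $q$ in $\mathcal A$ without appealing to geodesic normal coordinates, the Lorentzian length functional, or other devices unavailable when $D\subsetneq TM$. The resolution is precisely that chronological openness supplies openness of $I^\pm$ and Theorem~\ref{th:precausal_transitivity} supplies the strong transitivity that the classical proof obtained variationally; with these in hand the argument transfers, the only genuine bookkeeping change being that one works inside a convex (hence strongly causal) neighbourhood and invokes Theorem~\ref{lem c0} to pass to limits of causal curves.
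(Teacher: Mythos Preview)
Your proof of the first assertion is essentially identical to the paper's: pick a causally convex $V\subset W$, take a short t.f.d.\ curve through $q$ inside $V$, and observe that the resulting set $I^+(a)\cap I^-(b)$ is trapped in $V$ by causal convexity. For the second assertion your approach is substantively the same as the paper's---both reduce the hard implication $(1)\Rightarrow(2)$ to Penrose's classical argument, replacing geodesics by $C^0$-limits of horizontal causal curves in a convex neighbourhood (Theorem~\ref{lem c0})---but the paper organizes this differently: it first introduces the \emph{open} Alexandrov topology $\mathscr{A}_o$, proves the equivalences of Proposition~\ref{spacetime Hausdorff} for $\mathscr{A}_o$ in \emph{every} sub-space-time (Theorem~\ref{th: cc-transitive}), and then simply observes that $\mathscr{A}=\mathscr{A}_o$ whenever $M$ is chronologically open. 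Your direct route avoids introducing $\mathscr{A}_o$ and instead invokes chronological openness (and Theorem~\ref{th:precausal_transitivity}) at the precise step where it is needed; the paper's detour through $\mathscr{A}_o$ buys a cleaner separation between what holds in general and what requires the extra hypothesis, but the underlying analysis is the same.
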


\begin{proof}
Let  $M$ be strongly causal, but not necessarily chronologically open. Let $U\in\tau$ be causally convex. Let $\gamma_q\colon (-2\epsilon, 2\epsilon)\to U$ be a t.f.d.\ curve such that $\gamma_q(0) = q$, $\gamma_q(-\epsilon),\gamma_q(\epsilon)\in U$. Then $q\in I^+(\gamma_q(-\epsilon))\cap I^-(\gamma_q(\epsilon))=:A_q\subset U$. Thus $U = \bigcup_{q\in U}A_q$
and any set that is open in the manifold topology, is also open in the Alexandrov topology. The rest is just a special case of Theorem~\ref{th: cc-transitive}, which we state later.
\end{proof}

\begin{example}\label{example finer than mfd}
For non-chronologically open sub-space-times it is even possible to get an Alexandrov topology that is strictly finer than the manifold topology, but that are not strongly chronological. Consider 
\[
N=\left\{\Big((x,y,z)\in\R^3 \Big)\setminus\Big((0,2n,z)\mid\ z\in\R, n\in\Z\Big)\right\}.
\]
Let the horizontal distribution $D$ be spanned by $T=\frac{\partial}{\partial y}+x^2\frac{\partial}{\partial z}$, $X=\frac{\partial}{\partial x}$ and equipped with the scalar product $g(T,T) = - g(X,X) = -1$, $g(T,X) = 0$. Now consider the action of the group $G=\{\Phi_n(x,y,z) = (x,y+2n,z)\}$ by sub-Lorentzian isometries on $N$. Define $M=N/G$ to be the quotient space of the group action and let $\pi\colon N\to M$ denote the canonical projection. Then $(M,\pi_*D,\pi_*g,\pi_*T)$ is a sub-space-time in its own right by Lemma~\ref{quotient}. To simplify the notation we write $D=\pi_*D$, $g=\pi_*g$ and $T=\pi_*T$.

Let us first show that $(M,D,g,T)$ is not strongly causal. Let $p_0 = (0,y_0,z_0)$ be some point and $U$ a neighbourhood of $p_0$. Let $0<\delta<1$ and consider the curve
\[
\gamma_\delta(t) = \pi\left(\delta t, y_0 + t, z_0 + \frac{\delta^2t^3}{3}\right).
\]
By definition, the $x$-coordinate of $\gamma_\delta$ is strictly positive for positive times, and as $p_0\in M$, the curve is well-defined on the quotient space. The curve satisfies $\gamma_\delta(0) = p_0$, $\dot\gamma_\delta(t) = T_{\gamma_\delta(t)}+\delta\:X_{\gamma_\delta(t)}$ and $\gamma_\delta(2) = \left[p_0 +  (2\delta,2, \frac{8\delta^2}{3})\right]$. So for small $U$, any curve of the t.f.d.\ family $\{\gamma_\delta\}_{\delta\in(0,1)}$ leaves $U$ before $t=2$, but for small enough $\delta$ they return to $U$ later. Hence $p_0$ does not have arbitrarily small neighbourhoods $U$ in $M$ such that no t.f.d.\ curve that once leaves $U$ will never return, and strong causality fails at $p_0$.

Now we want to show that the Alexandrov topology on $M$ is finer than the manifold topology. Take $p_0=[x_0,y_0,z_0]\in M$ and a neighbourhood $U$ of $p_0$. We want to show that there are points $p_1=[x_1,y_1,z_1]$ and $p_2=[x_2,y_2,z_2]$ such that $p_0\in I^+(p_1)\cap I^-(p_2)\subset U$.

First assume that $x_0 = 0$. Then the curve $\gamma(t) = \pi(0,y_0+t,z_0)$ is well-defined on some small parameter interval such that $y_0+t\notin 2\Z$. Furthermore, it is t.f.d.\ as $\dot\gamma = T$, and clearly for some small $\epsilon>0$: $\gamma(-\epsilon,\epsilon)\subset U$. As in Example~\ref{specific example1} we find, that up to monotone reparametrisation, $\gamma$ is the only t.f.d.\ curve from $p_1:=[0,y_0-\epsilon,z_0]$ to $p_2:=[0,y_0+\epsilon,z_0]$. Then
\[
p_0\in I^+(p_1)\cap I^-(p_2) = \{\pi(0,y_0+t,z_0)\:|\:t\in(-\epsilon,\epsilon)\} \subset U.
\]
This also shows that there are sets which are open in the Alexandrov topology, but not in the manifold topology. Let us now suppose $x_0\neq 0$ and choose $\epsilon<\frac{|x_0|}{16}$.  Consider the curve
\[
\gamma\colon(-2\epsilon,2\epsilon)\to M, \qquad \gamma(t) = \pi\left(x_0,y_0+t, z_0 + x_0^2t\right)
\]
with derivative $\dot\gamma(t) = T_{\gamma(t)}$ and set $p_1 = \gamma(-\epsilon)$, $p_2= \gamma(\epsilon)$. Due to non-vanishing value of $x_0$ the curve is well-defined. Clearly $p_0\in I^+(p_1)\cap I^-(p_2)$. Now take $\bar p = [\bar x,\bar y,\bar z]\in I^+(p_1)\cap I^-(p_2)$. The components $\bar x,\bar z$ are independent of the choice of a representative, and as $z$ is non-decreasing along t.f.d.\ curves, we find 
\[
z_1 = z_0 - x_0^2\epsilon \leq \bar z \leq z_0+x_0^2\epsilon = z_2.
\]
Let $\sigma=(\sigma_x,\sigma_y,\sigma_z)$ be a t.f.d.\ curve connecting $p_1=\sigma(0)$ with $\bar p$. Fix parametrisation of $\sigma$ such that $\dot\sigma = T + \beta X$. Then we find a fixed time $\theta>0$ such that $\bar p = \sigma(\theta)$ and
\begin{align*}
\sigma_z(t) &= z_1 + \int_0^t\dot\sigma_z(s)\:ds
	= z_1 + \int_0^t\sigma_x^2(s)\:ds
	\geq z_1 + \int_0^{\min\{t,\frac{|x_1|}{2}\}}\sigma_x^2(s)\:ds
	\\
	&\geq z_1 + \int_0^{\min\{t,\frac{|x_1|}{2}\}}\left(\frac{x_1}{2}\right)^2\:ds
	= z_1 + \min\left\{t,\frac{|x_1|}{2}\right\}\frac{x_1^2}{4}.
\end{align*}
As $x_1=x_0$ we combine the above obtained inequalities and see
\[
z_0 + x_0^2\epsilon \geq \bar z =  \sigma_z(\theta)\geq z_0 - x_0^2\epsilon + \min\left\{\theta,\frac{|x_0|}{2}\right\}\frac{x_0^2}{4}
\]
or equivalently $8\epsilon > \min\{\theta, |x_0|/2\}$,
which in its turn implies $8\epsilon>\theta$ due to our choice of~$\epsilon$. This however leads to
\[\left|\bar y - y_0\right| \leq \left|\bar y - y_1\right| + |y_1 -y_0| = \theta + \epsilon< 9\epsilon,\]
\[|\bar x-x_0|= |\bar x-x_1| = \left|\int_0^\theta \beta(s)\: ds\right|\leq \theta< 8\epsilon, \]
\[|\bar z-z_0| = \left|\int_0^\theta \sigma_x^2(s)\:ds\right| <\int_0^\theta(|x_0|+8\epsilon)^2\:ds \leq 8\epsilon\left(|x_0|+8\epsilon\right)^2 \leq 8\epsilon\:\left(\frac{3\:|x_0|}{2}\right)^2.\]
Hence, the set $I^+(p_1)\cap I^-(p_2)$ becomes arbitrarily small as we let $\epsilon$ tend to $0$, but contains $p_0$ for any $\epsilon>0$. So inside every neighbourhood $U$ of $p_0$, that is open in the manifold topology, we can find $p_1,p_2\in U$ such that $p_0\in I^+(p_1)\cap I^-(p_2)$. This means that also at $x_0\neq 0$ the Alexandrov topology is finer than the manifold topology: $\tau\subset\mathscr{A}\Ra\tau\subsetneq\mathscr{A}$.
\end{example}

\begin{theorem}\label{pullback finer}
A sub-space-time $(M,D,g,T)$ is chronological if and only if the pullback of the Alexandrov topology along all t.f.d.\ curves to their parameter inverval $I\subset \R$ is finer than the standard topology on $I$ as a subspace of $\R$.
\end{theorem}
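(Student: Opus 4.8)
The plan is to establish the two implications separately; both directions are elementary, the only delicate point being a transitivity argument that I return to at the end. Recall that the pullback of $\mathcal A$ along a curve $\gamma\colon I\to M$ is by definition the topology $\{\gamma\inv(A)\mid A\in\mathcal A\}$ on $I$, so that ``finer than the standard topology'' means precisely that every standard-open subset of $I$ has this form. For the direction ``pullback always finer $\Ra$ $M$ chronological'' I would argue by contraposition: if $M$ fails to be chronological there is $p\in M$ with $p\ll p$, hence a t.f.d.\ horizontal curve $\gamma\colon[0,1]\to M$ with $\gamma(0)=\gamma(1)=p$, and this is the curve I would test against. If $A\in\mathcal A$ and $0\in\gamma\inv(A)$, then $p=\gamma(0)\in A$, so $p=\gamma(1)\in A$ and $1\in\gamma\inv(A)$; thus every pullback-open subset of $[0,1]$ containing $0$ also contains $1$, whereas the standard-open set $[0,1/2)$ does not. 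Hence the pullback topology along this $\gamma$ is not finer than the standard one.

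For the converse I would assume $M$ chronological, fix an arbitrary t.f.d.\ curve $\gamma\colon I\to M$, and show that the pullback-open sets form a neighbourhood basis at each $t_0\in I$ that refines the standard one; taking unions over points then upgrades this to ``every standard-open set is pullback-open''. Given $t_0$ interior to $I$ and $\varepsilon>0$, I would choose $a,b\in I$ with $t_0-\varepsilon<a<t_0<b<t_0+\varepsilon$ and set $A=I^+(\gamma(a))\cap I^-(\gamma(b))$, which lies in $\mathcal A$ as a finite intersection of subbasis sets. Since $\gamma|_{[a,t_0]}$ and $\gamma|_{[t_0,b]}$ are again t.f.d.\ horizontal, $\gamma(a)\ll\gamma(t_0)\ll\gamma(b)$, so $t_0\in\gamma\inv(A)$. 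Conversely, if $t\in I$ with $t\ge b$, then $\gamma(t)\notin I^-(\gamma(b))$: for $t=b$ because $\gamma(b)\ll\gamma(b)$ would contradict chronology, and for $t>b$ because $\gamma(b)\ll\gamma(t)$, so $\gamma(t)\in I^-(\gamma(b))$ would force $\gamma(b)\ll\gamma(b)$. By the symmetric argument $\gamma(t)\notin I^+(\gamma(a))$ whenever $t\le a$, so $\gamma\inv(A)\subset(a,b)\cap I\subset(t_0-\varepsilon,t_0+\varepsilon)\cap I$. If $t_0$ is an endpoint of $I$ one simply drops the missing factor and uses $A=I^-(\gamma(b))$ or $A=I^+(\gamma(a))$.

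The step I expect to be the main obstacle — or rather, where a naive approach breaks down — is the transitivity inference ``$\gamma(b)\ll\gamma(t)$ and $\gamma(t)\ll\gamma(b)$ imply $\gamma(b)\ll\gamma(b)$''. If one only knew $\gamma$ to be causal, one would merely have $\gamma(b)\le\gamma(t)$ and would then need the strong transitivity $p\le q\ll r\Ra p\ll r$, which is exactly the first part of Proposition~\ref{transitiv} and fails in general sub-space-times. The resolution, and the reason the argument closes cleanly without any chronological-openness assumption, is that we are pulling back along \emph{timelike} curves: a restriction of a t.f.d.\ curve to a nondegenerate subinterval is again t.f.d., so the relevant relation is honestly $\ll$ on both sides and only the always-valid transitivity of $\ll$ is invoked. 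The remaining points — that $A$ is genuinely Alexandrov-open, and the handling of endpoints of $I$ — are routine.
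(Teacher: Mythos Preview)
Your proof is correct and follows essentially the same approach as the paper's: both use a closed t.f.d.\ curve for the contrapositive direction and the sets $I^+(\gamma(a))\cap I^-(\gamma(b))$ (with one factor dropped at endpoints) for the forward direction. Your explicit discussion of why only the ordinary transitivity of $\ll$ is needed---rather than the strong transitivity of Proposition~\ref{transitiv}, which fails in general sub-space-times---is a useful clarification of a point the paper's terse proof leaves implicit.
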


\begin{proof}
Assume that $M$ fails to be chronological. Then there is a closed t.f.d.\ curve $\gamma\colon[0,1]\to M$, which means $\gamma(s)\ll \gamma(t)$ for all $t,s\in[0,1]$, hence the pullback of the Alexandrov topology along this specific $\gamma$ is $\{\emptyset, [0,1]\} = \gamma\inv(\mathscr{A})$.

If, on the other hand, $M$ is chronological, and $\gamma$ is an arbitrary curve on the open interval $I$, then $(s,t) = \gamma\inv\left(I^+(\gamma(s))\cap I^-(\gamma(t))\right)$ for $s,t\in I$. If $I=[a,b]$ is closed, then $(s,b] = \gamma\inv(I^+(\gamma(s))\cap I^-(p))$ for any $p\in I^+(\gamma(b))$ and similarly for the other cases. 
\end{proof}

\subsection{The Alexandrov and Manifold Topology in sub-Lorentzian Geometry}

We now come to a result that is new in sub-Lorentzian geometry and at the core of our investigation, namely the relation of the Alexandrov topology $\mathscr{A}$ to the manifold topology $\tau$ in proper sub-space-times. In contrast to space-times, we have the following result.

\begin{theorem}\label{TopologyAlexandrovTopology}
For sub-space-times all inclusions between the Alexandrov topology $\mathscr{A}$ and the manifold topology $\tau$ are possible, i.~e.\ there are sub-space-times such that
$$(1)\ \tau = \mathscr{A},\qquad (2)\ \tau \varsupsetneq \mathscr{A},\qquad (3)\ \tau \varsubsetneq \mathscr{A},\qquad (4)\ \tau \not\subset \mathscr{A},\  \tau \not\supset \mathscr{A}.
$$
\end{theorem}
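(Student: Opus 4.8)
The plan is to exhibit four explicit sub-space-times, one for each of the four cases, reusing as much as possible the geometry already developed in the examples. For case $(1)$, $\tau = \mathscr{A}$, the natural candidate is the Lorentzian Heisenberg group of Example~\ref{Heisenberg}: it is chronologically open, so by Corollary~\ref{Alexandrov Hausdorff} the equivalences of Proposition~\ref{spacetime Hausdorff} apply, and one only needs to check that it is strongly causal. Strong causality can be verified by hand: the coordinate function $z$ (or a suitable combination of $x$ and $z$) is monotone along t.f.d.\ curves, which prevents a causal curve leaving a small box from returning, so the Alexandrov topology is both finer and coarser than $\tau$, hence equal. Alternatively one can take a simple direct product, e.g.\ Minkowski $\R^{1,1}$ times a Riemannian factor with a bracket-generating distribution, where the Lorentzian factor already forces $\tau=\mathscr{A}$.

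For case $(2)$, $\tau \supsetneq \mathscr{A}$, I would take any space-time that is not strongly causal yet viewed as a sub-space-time by enlarging it trivially, or more simply any chronologically open sub-space-time failing strong causality; by the remark after the definition of the Alexandrov topology, $\tau \supset \mathscr{A}$ always holds for chronologically open sub-space-times, and non-strong-causality gives strictness via Proposition~\ref{spacetime Hausdorff}. A clean choice is a quotient of Minkowski-type space producing a closed or almost-closed causal curve. For case $(3)$, $\tau \subsetneq \mathscr{A}$, the work is already done: Example~\ref{example finer than mfd} constructs exactly such an $M$ (the quotient $N/G$), where it is shown that $I^+(p_1)\cap I^-(p_2)$ can be made to shrink inside any manifold neighbourhood of a point while still containing it, so $\tau \subsetneq \mathscr{A}$. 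I would simply cite that example.

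Case $(4)$, $\tau \not\subset \mathscr{A}$ and $\tau \not\supset \mathscr{A}$, is the main obstacle and requires a genuinely new construction. The idea is to build a sub-space-time that combines both pathologies: it should contain a region behaving like Example~\ref{specific example1} (where $I^+(0,U)$ fails to be open, giving a set open in $\mathscr{A}$ but not in $\tau$, so $\mathscr{A}\not\subset\tau$) glued to a region behaving like Example~\ref{example finer than mfd} or to a non-strongly-causal region (giving a set open in $\tau$ but not in $\mathscr{A}$, so $\tau\not\subset\mathscr{A}$). Concretely I would take the manifold of Example~\ref{example finer than mfd}, or a variant of it, and check that at points with $x_0=0$ the rigidity phenomenon of Example~\ref{specific example1} produces sets open in $\mathscr A$ but not in $\tau$, while simultaneously the failure of strong causality exhibited there produces a manifold-open set that is not Alexandrov-open. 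The delicate part is ensuring the two phenomena coexist at the level of the \emph{global} topologies rather than cancelling out, i.e.\ verifying that the manifold-open witness set genuinely fails to be a union of sets $I^+(p)\cap I^-(q)$, which requires controlling causal curves that may wander far from the point in question; here Lemma~\ref{lem boundary}, Proposition~\ref{properties of sets}, and the explicit integral estimates on the $z$-coordinate as in Examples~\ref{specific example1} and~\ref{example finer than mfd} are the tools I would use to pin down the reachable sets precisely enough to conclude.
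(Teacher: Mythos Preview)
Your approaches to cases (1)--(3) match the paper's: the Lorentzian Heisenberg group for (1) (the paper verifies strong causality directly by constructing explicit causally convex boxes $B(p_0,\epsilon)$; your monotone-coordinate sketch is in the same spirit, though note it is $x$, not $z$, that is monotone along f.d.\ curves there), a quotient of it by a discrete isometry group producing a closed t.f.d.\ curve while preserving chronological openness for (2), and Example~\ref{example finer than mfd} verbatim for~(3).

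For case (4), however, there is a genuine gap. You propose to use Example~\ref{example finer than mfd} (or a variant) and argue that the failure of strong causality there ``produces a manifold-open set that is not Alexandrov-open''. But that inference is precisely what Example~\ref{example finer than mfd} refutes: it is a sub-space-time that is \emph{not} strongly causal and yet has $\tau\subsetneq\mathscr{A}$, so every manifold-open set \emph{is} Alexandrov-open. The implication ``not strongly causal $\Rightarrow \tau\not\subset\mathscr{A}$'' is only valid for chronologically open sub-space-times (via Corollary~\ref{Alexandrov Hausdorff}), and the whole point of that example is that it is not chronologically open. So Example~\ref{example finer than mfd} itself cannot serve, and your fallback of ``controlling causal curves that may wander far'' has no concrete mechanism that forces $\tau\not\subset\mathscr{A}$.

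The paper's resolution is simpler than a gluing construction. Recall that Example~\ref{example finer than mfd} is built from the Martinet-type sub-space-time of Example~\ref{specific example1} by first \emph{removing} the lines $\{(0,2n,z)\}$ and only then passing to the quotient by $y\mapsto y+2$; this removal is exactly what prevents the rigid curve $t\mapsto(0,t,0)$ from closing up, and is what allows $\tau\subset\mathscr{A}$ to persist. For case~(4) one simply \emph{does not} remove those points: take Example~\ref{specific example1} as is and quotient by $G=\{(x,y,z)\mapsto(x,y+n,z):n\in\Z\}$. The rigidity argument lifts verbatim to show $I^+([0])$ is not $\tau$-open, so $\mathscr{A}\not\subset\tau$; and now $\gamma(t)=\pi(0,t,0)$ is a genuine \emph{closed} t.f.d.\ curve, so every $\mathscr{A}$-open set containing one of its points contains all of them, forcing $\tau\not\subset\mathscr{A}$. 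The presence of a closed t.f.d.\ curve is the robust device you need --- mere failure of strong causality is not enough once chronological openness is lost.
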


\begin{proof}
 The proof is contained in the following examples.
The first case $\tau = \mathscr{A}$ in Theorem~\ref{TopologyAlexandrovTopology} is realized  for  strongly causal space-times and the second case  $\tau \varsupsetneq \mathscr{A}$ is realised in space-times, which fail to be strongly causal. They are therefore in particular satisfied for certain sub-space-times. We show even more, namely that there are sub-space-times, with a distribution $D_p\subsetneq T_pM$, $p\in M$, which exhibit this type of behaviour.

\begin{example}\label{ex:4}
In the Lorentzian Heisenberg group $M=\R^3$, $D=\mathrm{span}\{T,Y\}$, where
\[
T= \frac{\partial}{\partial x} + \half\:y\:\frac{\partial}{\partial z}, \qquad Y=\frac{\partial}{\partial y} - \half\:x\:\frac{\partial}{\partial z},
\]
and $g(T,T) = -g(Y,Y) = -1$ the manifold topology $\tau$ and the Alexandrov topology $\mathscr A $ agree: $\tau=\mathscr{A}$.

We know from Example~\ref{Heisenberg} that $\mathscr{A}\subset\tau$. To obtain the inverse inclusion $\tau\subset\mathscr{A}$ we show that the Lorentzian Heisenberg group is strongly causal. To that end we construct a family of neighbourhoods $B(p_0,\epsilon)$ of a point $p_0 = (x_0,y_0,z_0)$, that become arbitrarily small for $\epsilon\to 0$ and such that any nspc.f.d.\ horizontal curve leaving $B(p_0,\epsilon)$ will never return back. Define
\begin{equation*}
B(p_0,\epsilon) = \left\{(x,y,z)\in\R^3\Big\vert
\begin{array}{lllll}
&|x-x_0|<\epsilon,
\\
&|y-y_0| < x-x_0+\epsilon,
\\
&|z-z_0| < \frac{|x_0|+|y_0|+7\epsilon}{2}(x-x_0+\epsilon)
\end{array}
\right\}.
\end{equation*}
Obviously $B(p_0,\epsilon)$ is open in $M$ for any choice of $p_0$, $\epsilon>0$, and 
\[
B(p_0,\epsilon) \subset (x_0-\epsilon,x_0+\epsilon)\times(y_0-2\epsilon,x_0+2\epsilon)\times \left(z_0 -2C\epsilon,z_0+2C\epsilon\right),
\]
where $C := C(x_0,y_0) = \frac{|x_0|+|y_0| + 7}{2}$ for $\epsilon<1$, so $B(p_0,\epsilon)$ can be made arbitrarily small.

Let $\gamma\colon I\to M$, $0\in I$, be a nspc.f.d.\ horizontal curve such that $\gamma(0) = p_1=(x_1,y_1,z_1)\in B(p_0,\epsilon)$. Then we have $\dot\gamma(t) = \alpha(t)\:T_{\gamma(t)}+\beta(t)\:Y_{\gamma(t)}$ with $\alpha >0$ and $|\beta|\leq \alpha$. Without loss of generality we assume that $\alpha\equiv 1$ by fixing the parametrization. Then 
\begin{align*}
x(t)-x_1&=\int_0^t\alpha(s)\:ds= \int_0^t1\, ds =t,
\\
|y(t)-y_1|&=\left|\int_0^t\beta(s)\,ds\right|\leq \int_0^t1\,ds=x(t)-x_1.
\end{align*}
If we take $\tau>0$ such that $\gamma(\tau)\in B(p_0,\epsilon)$, then
\[
\tau=x(\tau)-x_1 = x(\tau)-x_0 + x_0-x_1 \leq |x(\tau)-x_0| + |x_0-x_1| < 2\epsilon,
\]
and it follows for $0\leq t\leq \tau$ that
\begin{align*}
 |z(t) & -z_1| = \left|\int_0^t\half(\alpha(s)y(s)-\beta(s)x(s))\,ds\right|
	&&\leq \half\int_0^t|y(s)|+|x(s)|\:ds\\
	&\leq \half\int_0^t|y_1|+ |y(s)-y_1| + |x_1|+|x(s)-x_1|\,ds\\
	&\leq \half\left(|y_1| + t + |x_1|+t\right)t
	&&\leq \frac{|y_1|+|x_1| + 4\epsilon}{2}(x(t)-x_0)\\
	&\leq \frac{|y_1-y_0|+|y_0| + |x_1-x_0|+|x_0|+4\epsilon}{2}(x(t)-x_0)&&\leq \frac{|x_0|+|y_0| + 7\epsilon}{2}(x(t)-x_1)
\end{align*}
since by the construction of $B(p_0,\epsilon)$ we have $|x_1-x_0|<\epsilon$, $|y_1-y_0|<2\epsilon$. Our aim now is to show that $\gamma(t)\in B(p_0,\epsilon)$, and if $\gamma$ leaves $B(p_0,\epsilon)$, then it can not return later. As $x(t)$ is increasing in $t$, we have 
$x(t) \in [x_1,x(\tau)]$
so $x(t) = \lambda x_1 + (1-\lambda)x(\tau)$ for some $\lambda\in[0,1]$ and 
\begin{equation*}
|x(t)-x_0|= \left|\lambda x_1 + (1-\lambda)x(\tau)-x_0\right|\leq\lambda |x_1-x_0| + (1-\lambda)|x(\tau)-x_0| 
<\epsilon.
\end{equation*}
Finally, we get 
\begin{align*}
|y(t) - y_0| &\leq |y(t)-y_1| + |y_1-y_0|
	< x(t) - x_1 + x_1 - x_0+\epsilon
	= x(t) - x_0+\epsilon,
\\
|z(t)-z_0| &\leq |z(t)-z_1|+|z_1-z_0|
	< \frac{|x_0|+|y_0|+7\epsilon}{2}\big(x(t)-x_0+ \epsilon\big). 
\end{align*}
So $\gamma(\tau)\in B(p_0,\epsilon)$ implies $\gamma(t)\in B(p_0,\epsilon)$ for all $t\in[0,\tau]$, and no nspc.f.d.\ curve can leave $B(p_0,\epsilon)$ and return there. 

Assume now, that there is a nspc.p.d.\ curve $\gamma$ leaving and returning to $B(p_0,\epsilon)$. By reversing the orientation of $\gamma$, we find  a nspc.f.d.\ curve leaving and returning to $B(p_0,\epsilon)$, which leads to contradiction.
Hence, we have shown that the Lorentzian Heisenberg group is strongly causal, and by that $\tau \subset \mathscr{A}$ and $\tau = \mathscr{A}$.
\end{example}

For the proof of the second statement we give the following example.

\begin{example}
Let $M$ be the Lorentzian Heisenberg group and $G = G_\varphi$ the group of isometries
\[
G = \left\{\Phi_n(x,y,z) = \left(
\begin{array}{lll}
& x-n\cosh(\varphi),
\\
& y-n\sinh(\varphi), 
\\
& z + \frac{n}{2}\big(y\cosh(\varphi) - x\sinh(\varphi)\big)
\end{array}
\right)\Big\vert\ n\in\Z\right\}
\]
for some fixed $\varphi\in\mathbb R$. Then the manifold topology $\tau_{M/G}$ in $M/G$ is strictly finer than the Alexandrov topology $\mathscr{A}_{M/G} $: $\mathscr{A}_{M/G} \varsubsetneq\tau_{M/G}$.

 We write $[p]$ for points from $M/G$.
Take a point $[q]\in I^+([p])$. We can pick up a pre-image $q\in I^+(p)\cap \pi\inv([q])$ by Lemma~\ref{quotient}. Because $I^+(p)$ is open, a small neighbourhood $U\subset M$ of $q$ is contained in $I^+(p)$. We choose $U$ to be so small, that $\pi\vert_U$ is a homeomorphism. Using  Lemma~\ref{quotient}, we find $[q]\in \pi(U)\subset \pi(I^+(p)) = I^+([p])$, where $\pi(U)$ is open, since we choose $U$ to be small enough. Therefore, $\mathscr{A}_{M/G}\subset \tau_{M/G}$.

On the other hand $M/G$ contains the closed t.f.d.\ curve
\[\gamma\colon \R\to M/G,\qquad \gamma(t) = \pi\big( t\cdot(\cosh(\varphi),\sinh(\varphi),0)\big),
\]
where $\gamma(0) = \gamma(1)$. This means, that for any $V\in\mathscr{A}$ we find $\gamma(s)\in V \LRa \gamma(t)\in V$ for any $s,t\in\R$. Therefore, the Alexandrov topology cannot be Hausdorff, in particular, $\tau_{M/G}\not\subset\mathscr{A}_{M/G}$. Together with the previous inclusion, we deduce $\mathscr{A}_{M/G} \varsubsetneq\tau_{M/G}$.
\end{example}

For the third statement we refer the reader back to Example~\ref{example finer than mfd}, see also Example~\ref{specific example1}. We  now turn to the last example to finish the proof of Theorem~\ref{TopologyAlexandrovTopology}.
\begin{example}
Take the sub-space-time from Example~\ref{specific example1} and the group
\[
G = \left\{\Phi_n(x,y,z) = (x,y+n,z)\:|\:n\in\Z\right\}.
\]
Then the Alexandrov topology and the manifold topology of $M/G$ are not comparable: $\tau \not\subset \mathscr{A}, \tau \not\supset \mathscr{A}$.

The set $I^+([0])$ is not open in $M/G$ because
from one side $[(0,\theta,0)]\in I^+([0])$ for all $\theta>0$ since $(0,\theta,0)\in I^+(0)$, but from the other side 
$[(0,\theta,-a)]\notin I^+([0])$. Indeed, if $[(0,\theta,-a)]$ were in $I^+([0])$ there would be a t.f.d.\ curve $\gamma$ in $M/G$ from $[0]$ to $[(0,\theta,-a)]$. Then we could lift  this curve to a t.f.d.\ curve $\tilde\gamma$ in $M$ from $0$ to some representative $(0,\theta+n,-a)$ in the pre-image of $[(0,\theta,-a)]$, $n\in\Z$. This is not possible as shown in Example~\ref{specific example1}.
This proves $\mathscr{A}_{M/G}\not\subset\tau_{M/G}$. 

On the other hand $M/G$ contains the closed t.f.d.\ curve
\[\gamma\colon\R\to M/G,\qquad \gamma = \pi\big(0,t,0\big),
\]
where $\gamma(0) = \gamma(1)$. This means as in the preceding example that $\tau_{M/G}\not\subset\mathscr{A}_{M/G}$.
\end{example}~\end{proof}


\subsection{The Open Causal Relations}


We have seen that pathological cases occur, when we extend the Alexandrov topology $\mathcal A$ in the obvious way to sub-space-times. We now present a different extension of $\mathcal A$ with better behaviour, that unfortunately may seem less sensible from a physical point of view, because it simply ignores the pathological cases that can occur. Let us start by explaining what happens, if we just force our extension of the Alexandrov topology to be open in the manifold topology.

\begin{lemma}\label{lem open relations}
Let $(M,D,g,T)$ be a sub-space-time. Then $q\in \mathrm{int}(I^+(p))\LRa p\in \mathrm{int}(I^-(q))$.
\end{lemma}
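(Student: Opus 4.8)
The statement is symmetric in the sense that exchanging the roles of $p$ and $q$ together with reversing the time orientation $T\mapsto -T$ interchanges $I^+$ and $I^-$; hence it suffices to prove one implication, say $q\in\mathrm{int}(I^+(p))\Rightarrow p\in\mathrm{int}(I^-(q))$, and the other follows by applying the first to the sub-space-time $(M,D,g,-T)$, in which $I^+$ and $I^-$ are swapped. So the plan is to fix $p,q$ with $q\in\mathrm{int}(I^+(p))$ and produce an open neighbourhood of $p$ contained in $I^-(q)$.

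The key idea is a backward-flow / perturbation argument of exactly the same flavour as the one used in the proof of Lemma~\ref{lem boundary}. Since $q\in I^+(p)$ there is a t.f.d.\ horizontal curve $\sigma\colon[0,1]\to M$ from $p$ to $q$. Work in a convex neighbourhood $U$ of $p$ with an orthonormal frame $T,X_1,\dots,X_d$, and expand $\dot\sigma = T+\sum_i u^i X_i$ on a small initial subinterval $[0,\epsilon]$, where the $u^i\in L^2$. For each point $r$ near $p$ solve the Cauchy problem
\[
\gamma_r(0)=r,\qquad \dot\gamma_r(t) = T(\gamma_r(t)) + \sum_i u^i(t)\,X_i(\gamma_r(t)),\qquad t\in[0,\epsilon],
\]
whose solutions exist, remain t.f.d., and depend continuously on $r$ by ODE theory (and its $L^2$-coefficient extension). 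The endpoint map $r\mapsto\gamma_r(\epsilon)$ is continuous and sends $p$ to $\sigma(\epsilon)$, which lies in the open set $\mathrm{int}(I^+(p))$; more to the point, $\sigma(\epsilon)\in\mathrm{int}(I^-(q))$ too, since $\sigma(\epsilon)\ll q$ along the tail of $\sigma$ and, being chronologically related to $q$ through an interior point, it has a whole neighbourhood mapping into $I^-(q)$ — here one uses that $\sigma(\epsilon)\in I^+(p)$ is itself an interior point of $I^-(q)$ precisely because $q\in\mathrm{int}(I^+(p)) = \mathrm{int}(J^+(p))$, so the argument of Lemma~\ref{lem boundary}/Proposition~\ref{properties of sets} applies. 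Thus there is an open neighbourhood $W$ of $\sigma(\epsilon)$ with $W\subset I^-(q)$.

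By continuity of the endpoint map, there is an open neighbourhood $V$ of $p$ such that $\gamma_r(\epsilon)\in W$ for all $r\in V$. For each such $r$ we then have a t.f.d.\ curve from $r$ to $\gamma_r(\epsilon)\in W$ followed by a t.f.d.\ curve from $\gamma_r(\epsilon)$ to $q$ (the latter exists since $\gamma_r(\epsilon)\in W\subset I^-(q)$), and concatenating gives $r\ll q$, i.e.\ $r\in I^-(q)$. Hence $V\subset I^-(q)$ with $V$ open and $p\in V$, so $p\in\mathrm{int}(I^-(q))$, which is the desired implication.

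The main obstacle to watch is making precise the claim that $\sigma(\epsilon)$ is an \emph{interior} point of $I^-(q)$ so that the neighbourhood $W$ exists; this is where the hypothesis $q\in\mathrm{int}(I^+(p))$ (as opposed to merely $q\in I^+(p)$) is genuinely used, and one has to invoke the structural results on $I^\pm$ and $J^\pm$ (Proposition~\ref{properties of sets}, Lemma~\ref{lem boundary}) rather than the openness of chronological sets, which fails in general sub-space-times. Everything else — existence, continuity and causal character of the perturbed flow $\gamma_r$, and the concatenation step — is routine and parallels the Lorentzian treatment already cited in the paper.
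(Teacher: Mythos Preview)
Your proposal contains a genuine circular gap at exactly the point you flag as ``the main obstacle.'' You need $\sigma(\epsilon)\in\mathrm{int}(I^-(q))$, and you justify this by saying it holds ``precisely because $q\in\mathrm{int}(I^+(p))=\mathrm{int}(J^+(p))$, so the argument of Lemma~\ref{lem boundary}/Proposition~\ref{properties of sets} applies.'' But the hypothesis $q\in\mathrm{int}(I^+(p))$ is a statement about $I^+(p)$, not about $I^-(q)$; neither Lemma~\ref{lem boundary} nor Proposition~\ref{properties of sets} lets you pass from one to the other along the given curve $\sigma$. Indeed, the time-reversed form of Lemma~\ref{lem boundary} would yield $\sigma(\epsilon)\in\mathrm{int}(I^-(q))$ only once you already know that \emph{some} later point of $\sigma$ lies in $\mathrm{int}(I^-(q))$ --- which is precisely what is in question. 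Worse, if your claimed implication ``$q\in\mathrm{int}(I^+(p))\Rightarrow\sigma(\epsilon)\in\mathrm{int}(I^-(q))$'' were available, you could take $\epsilon=0$ and obtain the lemma outright, making the entire ODE perturbation superfluous.

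The paper's proof closes this gap with a single extra ingredient that you never invoke: the density statement $q\in\overline{\mathrm{int}(I^-(q))}$ from Proposition~\ref{properties of sets}. Since $\mathrm{int}(I^+(p))$ is an open neighbourhood of $q$, this forces $\mathrm{int}(I^+(p))\cap\mathrm{int}(I^-(q))\neq\emptyset$; one then picks $r$ in this intersection, builds a t.p.d.\ curve $q\to r\to p$, and applies the time-reversed Lemma~\ref{lem boundary} at $r$ to conclude $p\in\mathrm{int}(I^-(q))$. No ODE perturbation is needed. Your perturbation argument could be completed, but only after supplying this same density step to produce the neighbourhood $W$ --- at which point the perturbation becomes redundant.
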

\begin{proof}
Assume $q\in \mathrm{int}(I^+(p))$. We have shown that also $q\in \overline{\mathrm{int}(I^-(q))}$, which means
\[
q\in \overline{\mathrm{int}(I^+(p))\cap \mathrm{int}(I^-(q))}.
\]
In particular, there is a point $r\in \mathrm{int}(I^+(p))\cap \mathrm{int}(I^-(q))$ and a t.p.d.\ curve $\gamma$ such that
\[
\gamma(0) = q, \quad\gamma\left(\half\right) = r, \quad \gamma(1) = p.
\]
By reversing time and using Lemma~\ref{lem boundary}, we see that $\gamma(t)\in \mathrm{int}(I^-(q))$ for all $t\geq \half$, so in particular $p\in \mathrm{int}(I^-(q))$. The same holds with reversed roles.
\end{proof}
We conclude, that for $p\in M$ there is some $q\in M$ such that $p\in \mathrm{int}(I^+(q))$, and we can even choose $q$ as close to $p$ as we want. In the same way as for chronologically open sub-space-times we find a way to obtain the basis of a topology, that we call {\it open Alexandrov topology}.

\begin{definition}
Let $(M,D,g,T)$ be a sub-space-time. We write $p\ll_o q$ if $q\in \mathrm{int}(I^+(p))$. The open Alexandrov topology $\mathscr{A}_o$ is the topology generated by the basis
\[
\mathscr{B}_o = \{\mathrm{int}(I^+(p))\cap \mathrm{int}(I^-(q))\:|\:p,q\in M\}.
\]
\end{definition}

\begin{theorem}
The strong transitivity of order relations $\leq,\ll_o$ hold in the sense 
\begin{enumerate}
\item $p\ll_o q\ \ \Ra\ \ p\ll q\ \ \Ra\ \ p\leq q$,
\item $p\ll_o q\leq r\ \ \Ra\ \ p\ll_o r$,
\item $p\leq q\ll_o r\ \ \Ra\ \ p\ll_o r$.
\end{enumerate}
The Alexandrov topology is finer than the open Alexandrov topology and they are the same if and only if $M$ is chronologically open.
\end{theorem}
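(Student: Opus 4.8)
The plan is to prove the three transitivity statements first, then deduce the topological comparison. For (1), the implication $p \ll_o q \Ra p \ll q$ is immediate from $\mathrm{int}(I^+(p)) \subset I^+(p)$, and $p \ll q \Ra p \leq q$ is already known. For (2), suppose $p \ll_o q \leq r$, i.e.\ $q \in \mathrm{int}(I^+(p))$ and there is a nspc.f.d.\ curve $\gamma$ from $q$ to $r$. I want to invoke Lemma~\ref{lem boundary}: since $\mathrm{int}(I^+(p)) \subset \mathrm{int}(J^+(p))$ (indeed they are equal by Proposition~\ref{properties of sets}(2)), the point $q$ lies in $\mathrm{int}(J^+(p))$, so by Lemma~\ref{lem boundary} the curve $\gamma$ stays in $\mathrm{int}(J^+(p))$ for all parameters past $q$; in particular $r \in \mathrm{int}(J^+(p)) = \mathrm{int}(I^+(p))$, which is exactly $p \ll_o r$. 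Statement (3) follows from (2) by the same time-reversal trick used in Theorem~\ref{th:precausal_transitivity}: pass to $(M,D,g,-T)$, where $p \leq q \ll_o r$ becomes $r \ll_o q \leq p$; note here that the open Alexandrov relation is symmetric under reversing $T$ because Lemma~\ref{lem open relations} shows $q \in \mathrm{int}(I^+(p)) \LRa p \in \mathrm{int}(I^-(q))$, so $\mathrm{int}(I^-(\cdot))$ in one time orientation is $\mathrm{int}(I^+(\cdot))$ in the other. Apply (2) in the reversed space-time to get $r \ll_o p$, then reverse time again.

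For the topological comparison, $\mathscr{A}_o \subset \mathscr{A}$ is clear because each basic open set $\mathrm{int}(I^+(p)) \cap \mathrm{int}(I^-(q))$ of $\mathscr{A}_o$ is an intersection of the $\mathscr{A}$-open set (here I should check that $\mathrm{int}(I^+(p))$ is $\mathscr{A}$-open: this needs the strong transitivity (2),(3) plus Lemma~\ref{lem open relations}, mimicking the proof of Lemma~\ref{precausal sub-space-time} — given $r \in \mathrm{int}(I^+(p))$, pick via Lemma~\ref{lem open relations} and the remark after it a point $s$ with $r \in \mathrm{int}(I^+(s))$ and then $s' $ with $r \in \mathrm{int}(I^-(s'))$ using a short t.f.d.\ curve through $r$; then $I^+(s) \cap I^-(s')$ is an $\mathscr{A}$-subbasic-generated neighbourhood of $r$ contained in $\mathrm{int}(I^+(p))$ by transitivity). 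Once $\mathscr{B}_o$ consists of $\mathscr{A}$-open sets, $\mathscr{A}_o \subset \mathscr{A}$ follows.

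For the equivalence: if $M$ is chronologically open then $I^+(p) = \mathrm{int}(I^+(p))$ for every $p$, so the two bases literally coincide and $\mathscr{A}_o = \mathscr{A}$. Conversely, suppose $\mathscr{A}_o = \mathscr{A}$; I must show $I^+(p)$ is open for all $p$. Fix $p$ and $q \in I^+(p)$. Since $\mathscr{A}_o = \mathscr{A}$ and $I^+(p)$ is $\mathscr{A}$-open (it is a subbasic set), it is $\mathscr{A}_o$-open, so there is a basic set $\mathrm{int}(I^+(r)) \cap \mathrm{int}(I^-(s)) \subset I^+(p)$ containing $q$; this is a manifold-open neighbourhood of $q$ inside $I^+(p)$, hence $I^+(p)$ is open, and symmetrically for $I^-(p)$. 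The main obstacle I anticipate is the lemma-style argument that $\mathrm{int}(I^+(p))$ is genuinely $\mathscr{A}$-open — this is where one must carefully assemble Lemma~\ref{lem open relations}, the "we can choose $q$ as close to $p$ as we want" remark, and strong transitivity, in exactly the pattern of Lemma~\ref{precausal sub-space-time} but with interiors inserted throughout; everything else is bookkeeping.
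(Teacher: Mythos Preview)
Your proposal is correct and follows essentially the same approach as the paper: items (1)--(3) are proved identically (trivial for (1), Lemma~\ref{lem boundary} with $\mathrm{int}(J^+(p))=\mathrm{int}(I^+(p))$ for (2), time reversal via Lemma~\ref{lem open relations} for (3)), and the equivalence with chronological openness is argued the same way. The only presentational difference is in showing $\mathscr{A}_o\subset\mathscr{A}$: the paper takes a basic set $\mathrm{int}(I^+(p))\cap\mathrm{int}(I^-(r))$, a point $q$ in it, and picks intermediate points $p',r'$ on the connecting t.f.d.\ curves close enough to $q$ that $p\ll_o p'\ll q\ll r'\ll_o r$, whence $q\in I^+(p')\cap I^-(r')\subset \mathrm{int}(I^+(p))\cap\mathrm{int}(I^-(r))$ by (2) and (3); you instead show each $\mathrm{int}(I^+(p))$ is $\mathscr{A}$-open using the remark after Lemma~\ref{lem open relations} to locate the intermediate point --- the same mechanism, just packaged slightly differently.
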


\begin{proof}
(1) follows trivially from the definition. To show (2) we choose $p,q,r$ in $M$ such that $p\ll_o q\leq r$. Then there is a nspc.f.d.\ curve from $p$ to $r$ passing through $q$. Due to Lemma~\ref{lem boundary}, the curve is totally contained in $\mathrm{int}(J^+(p)) = \mathrm{int}(I^+(p))$ after passing $q$, in particular we find $p\ll_o r$. By reversing the time orientation and employing Lemma~\ref{lem open relations} we obtain (3).

To show that $\mathscr{A}_o\subset\mathscr{A}$ let now $p,q,r$ be such that $p\ll_o q\ll_o r$. Then there are points $p',r'$ on the connecting t.f.d.\ curves such that $p\ll_o p'\ll q \ll r'\ll_o r$, hence
\[
p\in I^+(p')\cap I^-(r') \subset \mathrm{int}(I^+(p))\cap \mathrm{int}(I^-(r)),
\]
or in other words, the Alexandrov topology is finer than the open Alexandrov topology. Clearly, two topologies agree if $M$ is chronologically open, since then $I^+(p) = \mathrm{int}(I^+(p))$ and bases agree. Nevertheless, they cannot coincide in the case when $M$ is not chronologically open.
\end{proof}

\begin{remark}
This helps us to easily generalize Theorem \ref{compact spacetime}: Every compact sub-space-time fails to be chronological, too.
\end{remark}

\begin{theorem}\label{th: cc-transitive}
Let $(M,D,g,T)$ be a sub-space-time. Then the following are equivalent:
$$
(1)\ M\ \text{ is strongly causal},\qquad(2)\ \mathscr{A}_o = \tau,\qquad (3)\ \mathscr{A}_o\ \text{ is Hausdorff}.
$$
\end{theorem}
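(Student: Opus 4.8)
The plan is to prove the chain of equivalences $(1)\Rightarrow(2)\Rightarrow(3)\Rightarrow(1)$, exploiting the analogous Lorentzian result in Proposition~\ref{spacetime Hausdorff} as a template while replacing any use of openness of the sets $I^{\pm}(p)$ with openness of $\mathrm{int}(I^{\pm}(p))$ and strong transitivity of $\leq,\ll$ with the strong transitivity of $\leq,\ll_o$ proved in the previous theorem. The implication $(1)\Rightarrow(2)$ is essentially already contained in the proof of Corollary~\ref{Alexandrov Hausdorff}: if $M$ is strongly causal, then for every $q\in M$ and every causally convex neighbourhood $U$ we produce a short t.f.d.\ curve $\gamma_q\colon(-2\eps,2\eps)\to U$ with $\gamma_q(0)=q$, and then $q\in \mathrm{int}(I^+(\gamma_q(-\eps)))\cap \mathrm{int}(I^-(\gamma_q(\eps)))\subset U$ because the causal convexity of $U$ forces $I^+(\gamma_q(-\eps))\cap I^-(\gamma_q(\eps))\subset U$, and this set has nonempty interior containing $q$ by Lemma~\ref{lem open relations} and Proposition~\ref{properties of sets}. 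Hence every $\tau$-open set is $\mathscr{A}_o$-open; the reverse inclusion $\mathscr{A}_o\subset\tau$ holds always by definition, since the basis sets of $\mathscr{A}_o$ are manifestly $\tau$-open. So $\mathscr{A}_o=\tau$.

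The implication $(2)\Rightarrow(3)$ is immediate: $M$ is a manifold, hence $(M,\tau)$ is Hausdorff, and if $\mathscr{A}_o=\tau$ then $\mathscr{A}_o$ is Hausdorff.

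The substantive direction is $(3)\Rightarrow(1)$, and I expect this to be the main obstacle. I would argue by contraposition: assume $M$ is not strongly causal and produce two points that cannot be separated in $\mathscr{A}_o$. Failure of strong causality at some $p_0$ means there is a neighbourhood $U$ of $p_0$ such that every neighbourhood $V\subset U$ of $p_0$ admits a nspc.f.d.\ curve that leaves $V$ and later returns to it. The standard Lorentzian device (see~\cite{BEE96,Penr}) is to take a nested sequence of neighbourhoods $V_n\downarrow\{p_0\}$, pick nspc.f.d.\ curves $\gamma_n$ starting and ending in $V_n$ but leaving $U$, extract via the limit curve theorem (here in the $C^0$-topology on curves, Theorem~\ref{lem c0}, valid because a neighbourhood with compact closure is strongly causal in the relevant sense, or by passing to a convex neighbourhood) a limit nspc.f.d.\ curve through $p_0$ that reaches a point $q\in\partial U$ with $q$ then connected back near $p_0$; this yields a point $q\neq p_0$ with $p_0\leq q$ and, for every neighbourhood of $p_0$ and of $q$, causal curves going $p_0\rightsquigarrow q\rightsquigarrow$ (back near $p_0$). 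One then shows that $p_0$ and $q$ cannot be separated by basic open sets of $\mathscr{A}_o$: any $\mathrm{int}(I^+(a))\cap\mathrm{int}(I^-(b))$ containing $p_0$ and any such set containing $q$ must intersect, because using Lemma~\ref{lem open relations} one finds $a\ll_o p_0$ and $q\ll_o b'$ for suitable points and then the returning causal curves together with strong transitivity $(2),(3)$ of the previous theorem force a common point in $\mathrm{int}(I^+(a))\cap\mathrm{int}(I^-(b))$. The delicate point is to carry out the limit-curve construction purely in the horizontal category; but Proposition~\ref{extension}, Corollary~\ref{regularity} and Theorem~\ref{lem c0} reduce this to the classical Lorentzian statement, so the sub-Lorentzian case follows. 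I would also double-check that the argument only ever needs $\ll_o$ and $\mathrm{int}(I^{\pm})$, never the possibly-non-open sets $I^{\pm}$ themselves, which is exactly why the open Alexandrov topology behaves well where the naive one fails.

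Finally I would remark that combining these three implications completes the proof, and note that this theorem is the promised generalization of Proposition~\ref{spacetime Hausdorff} alluded to in Corollary~\ref{Alexandrov Hausdorff}, now valid for arbitrary sub-space-times rather than only chronologically open ones, precisely because $\mathscr{A}_o$ replaces $\mathscr{A}$.
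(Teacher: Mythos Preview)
Your proposal is correct and follows essentially the same route as the paper: for $(1)\Rightarrow(2)$ you adapt the proof of Corollary~\ref{Alexandrov Hausdorff} using Lemma~\ref{lem open relations} (exactly what the paper does), $(2)\Rightarrow(3)$ is trivial, and for $(3)\Rightarrow(1)$ you invoke the Penrose-style limit-curve argument carried out in a convex neighbourhood via $C^0$-convergence, which is precisely what the paper prescribes. One small point worth tightening in $(1)\Rightarrow(2)$: merely choosing a t.f.d.\ curve $\gamma_q$ and asserting $q\in\mathrm{int}(I^+(\gamma_q(-\eps)))$ is not automatic, since $p\ll q$ does not in general imply $p\ll_o q$; rather, you should invoke the remark following Lemma~\ref{lem open relations} (which uses Proposition~\ref{properties of sets}) to choose the endpoints $p,r\in U$ directly so that $q\in\mathrm{int}(I^+(p))\cap\mathrm{int}(I^-(r))$, and then causal convexity of $U$ gives the desired containment.
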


\begin{proof}
Corollary~\ref{Alexandrov Hausdorff} implies that in strongly causal sub-space-times the Alexandrov topology is finer than the manifold topology. A combination of the same proof with Lemma~\ref{lem open relations} shows, that also the open Alexandrov topology is finer than the manifold topology, which is, in particular, Hausdorff. 

It remains to show that if in a general sub-space-time the open Alexandrov topology is Hausdorff, the sub-space-time must be strongly causal.
The proof is essentially follows~\cite{Penr}, only one needs to use the convergence of curves in the $C^0$-topology in a convex neighbourhood instead of geodesics.
\end{proof}

Since $\mathscr{A} = \mathscr{A}_o$ in chronologically open sub-space-times, we deduce Corollary~\ref{Alexandrov Hausdorff} from Theorem \ref{th: cc-transitive}. We actually find that the strongly causal sub-space-times are those, in which even the coarser topology $\mathscr{A}_o$ is Hausdorff, and not only $\mathscr{A}$.


\subsection{Chronologically Open sub-Space-Times}

We have seen in Theorem \ref{th: cc-transitive} how the open Alexandrov topology of a sub-space-time and the property of being strongly causal are linked. It is natural to ask, what other properties the topological space $(M,\mathscr{A}_o)$ possesses and whether they are also related to causality. 

Unfortunately, if $\mathscr{A}_o$ is Hausdorff, it is already the same as the manifold topology and thus metrizable, so it does not make any sense to ask for stronger properties than Hausdorff. Neither is the condition that one-point sets be closed interesting, because it only means that $M$ is chronological. The following result holds quite trivially, as $\mathscr{A}_o\subset \tau$.

\begin{proposition}
Let $(M,D,g,T)$ be a sub-space-time. Then the topological space $(M,\mathscr{A}_o)$ is second countable, path-connected and locally path-connected, i.~e.\ also first countable, separable, connected and locally connected.
\end{proposition}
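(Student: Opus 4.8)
The plan is to leverage the inclusion $\mathscr{A}_o \subset \tau$ together with the fact that the manifold topology $\tau$ of a smooth manifold is itself second countable, path-connected and locally path-connected; the coarser topology inherits all of these properties, since each of them is preserved under passing to a coarser topology (one keeps all the old open sets that witness the properties). Concretely, I would proceed property by property.

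First I would recall that any smooth manifold $M$ (being, by our standing assumptions, connected and admitting a locally finite atlas by charts homeomorphic to open subsets of $\R^n$) is second countable in the manifold topology $\tau$. Since $\mathscr{B}_o = \{\mathrm{int}(I^+(p))\cap \mathrm{int}(I^-(q))\mid p,q\in M\}\subset\tau$, every $\mathscr{A}_o$-open set is $\tau$-open, so a countable base for $\tau$ is a fortiori a collection of $\mathscr{A}_o$-open sets; but I must be slightly careful here, as a countable $\tau$-base need not be an $\mathscr{A}_o$-base. The clean route is instead: $\mathscr{A}_o$ has the explicit base $\mathscr{B}_o$, and every $B\in\mathscr{B}_o$ is $\tau$-open, hence a union of members of a fixed countable $\tau$-base $\mathscr{C}$; collecting the countably many members of $\mathscr{C}$ used across all $B$ — or more simply, noting that $\mathscr{A}_o$, being a topology with a base and coarser than a second-countable topology, is generated by countably many sets because any topology coarser than a second countable one is second countable (a standard fact: the lattice of topologies coarser than a second countable topology is itself countable-based, since each such topology is determined by which of the countably many basic $\tau$-sets it contains) — gives second countability of $(M,\mathscr{A}_o)$. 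From second countability, first countability and separability follow immediately.

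Next, for path-connectedness and local path-connectedness I would use Lemma~\ref{lem open relations} and the surrounding discussion. Given any $p\in M$, the discussion after Lemma~\ref{lem open relations} shows there is $q$ with $p\in\mathrm{int}(I^+(q))$, and one can take $q$ arbitrarily close to $p$; combined with the fact that a t.f.d.\ horizontal curve through $p$ exists (as used in Lemma~\ref{precausal sub-space-time}), every $\mathscr{A}_o$-basic neighbourhood of $p$ contains such a curve segment through $p$, which is $\mathscr{A}_o$-continuous into $M$ because its image lies in that basic set and, more to the point, any $\tau$-continuous path is $\mathscr{A}_o$-continuous since $\mathscr{A}_o\subset\tau$. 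Thus $M$ is $\mathscr{A}_o$-locally path-connected: each point has a neighbourhood base of $\mathscr{A}_o$-path-connected sets (the basic sets $\mathrm{int}(I^+(p'))\cap\mathrm{int}(I^-(q'))$, which are path-connected because through any two of their points one can route a t.f.d.\ curve via a common point, exactly as in the proof of Lemma~\ref{precausal sub-space-time}). For global path-connectedness, $M$ is connected (standing assumption) and $\tau$-path-connected as a connected manifold, hence $\mathscr{A}_o$-path-connected since every $\tau$-path is $\mathscr{A}_o$-continuous. Finally, local path-connectedness plus connectedness yields connectedness and local connectedness in $\mathscr{A}_o$ formally.

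The only genuine subtlety — and the step I would write most carefully — is the second-countability argument: one must not confuse "$\mathscr{A}_o\subset\tau$ and $\tau$ second countable" with "a $\tau$-base is an $\mathscr{A}_o$-base." The correct lemma to invoke (or prove in one line) is that \emph{every topology coarser than a second-countable topology is second countable}, which follows because such a topology is determined by the subfamily of a fixed countable $\tau$-base that happens to be open in it, so there are only countably many relevant generating sets and one checks these generate a countable base. Everything else in the statement is soft and follows from $\mathscr{A}_o\subset\tau$ together with the already-established existence of t.f.d.\ curve segments through every point inside every basic $\mathscr{A}_o$-neighbourhood.
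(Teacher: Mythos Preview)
Your arguments for path-connectedness and (with some fleshing out) local path-connectedness are fine; those really do follow from $\mathscr{A}_o\subset\tau$ together with the path-connectedness of the basic sets via t.f.d.\ curves. The genuine gap is the second-countability step: the lemma you invoke, that \emph{every topology coarser than a second-countable topology is second countable}, is false. The cofinite topology on $\R$ is coarser than the Euclidean topology (complements of finite sets are Euclidean-open), yet it has no countable base: if $\{B_n\}$ were one, each $\R\setminus B_n$ is finite, so $\bigcup_n(\R\setminus B_n)$ is countable; any $x$ outside this union lies in every $B_n$, and then the cofinite-open set $\R\setminus\{x\}$ cannot be written as a union of the $B_n$. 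Your one-line justification --- that a coarser topology is ``determined by which members of a fixed countable $\tau$-base it contains'' --- fails for exactly the same reason.

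Second countability of $(M,\mathscr{A}_o)$ is true, but it needs the specific structure of $\mathscr{A}_o$, not merely the inclusion $\mathscr{A}_o\subset\tau$. A correct argument: fix a countable $\tau$-dense set $\{p_n\}\subset M$ and show that $\{\mathrm{int}(I^+(p_n))\cap\mathrm{int}(I^-(p_m))\}_{n,m}$ is a base for $\mathscr{A}_o$. Given $r\in\mathrm{int}(I^+(p))\cap\mathrm{int}(I^-(q))$, the $\tau$-open set $\mathrm{int}(I^+(p))\cap\mathrm{int}(I^-(r))$ is non-empty (since $r\in\overline{\mathrm{int}(I^-(r))}$ by Proposition~\ref{properties of sets} and $\mathrm{int}(I^+(p))$ is a $\tau$-neighbourhood of $r$), hence contains some $p_n$; the strong transitivity of $\ll_o$ then gives $r\in\mathrm{int}(I^+(p_n))\subset\mathrm{int}(I^+(p))$, and the future side is symmetric. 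The paper's own one-line justification, ``trivially, as $\mathscr{A}_o\subset\tau$'', glosses over precisely this point.
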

Note that the same need not hold for the topological space $(M,\mathscr{A})$. In the sub-space-time from Example \ref{specific example1} we have $I^+(0,y_0,z_0) \cap I^-(0,y_0+\theta,z_0) = \{(0,y_0+t,z_0)\:|\:t\in(0,\theta)\}$, so the lines on which $z$ is constant and $x=0$ are open in the Alexandrov topology. As there are uncountably many such lines, the topology $\mathscr A$ is not second countable. Still, it is metrizable by the metric
\[
d\left(\begin{pmatrix}x\\y\\z\end{pmatrix},\begin{pmatrix}a\\b\\c\end{pmatrix}\right) = 
\begin{cases}
\min\{1,|x-a|+|y-b|+|z-c|\}&\quad\text{if}\quad x,a\neq 0,
\\ 
1&\quad\text{if}\ \ 
\begin{cases}
x=0, a\neq 0\ \text{ or }
\\
x\neq 0, a = 0,
\end{cases}
\\
1&\quad\text{if}\quad x=a=0, z\neq c,
\\ 
\min\{1,|y-b|\}&\quad\text{if}\quad x=a=0, z=c,
\end{cases}\]
and hence first countable. $(M,\mathscr A)$ is not even connected as it can be written as a disjoint union of open sets $M= \{x<0\}\cup \{x=0\} \cup \{x> 0\}$. Since chronologically open sub-space-times are the most well-behaved, we are interested in finding criteria to see that sub-space-times are chronologically open. 
\begin{definition}\cite{AgrSach,LS} 
Let $M$ be a manifold with a distribution $D$. A curve $\gamma:I\to M$ is called a Goh-curve, if there is a curve $\lambda:I\to T^*M$, such that
\[
\pi\circ\lambda = \gamma,\qquad \lambda(X) = \lambda[X,Y] = 0
\]
for all horizontal vectorfields $X,Y$.
\end{definition}

\begin{proposition}~\cite{G1} 
Let $(M,D,g,T)$ be a sub-space-time, $p\in M$, and $\gamma\colon [0,1]\to M$ a t.f.d.\ curve such that $\gamma([0,1])\subset\partial I^+(p)\cap I^+(p)$. Then $\gamma$ is a Goh-curve.
\end{proposition}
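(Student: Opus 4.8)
The plan is to show that along a t.f.d.\ curve $\gamma$ trapped in $\partial I^+(p)\cap I^+(p)$, the annihilator covector provided by first-order optimality conditions must in fact kill the bracket terms as well, i.e.\ that $\gamma$ is not merely an extremal but a Goh-curve. First I would invoke Proposition~\ref{properties of sets}: since $q=\gamma(t)\in\partial I^+(p)$ for all $t$, we also have $\gamma(t)\in\partial J^+(p)$, so $\gamma$ stays on the boundary of the reachable set $J^+(p)$ the whole time. The key structural input is that such a boundary curve is an abnormal (singular) extremal for the endpoint map of the horizontal control system: it cannot lie in the interior of the reachable set, so by the standard Pontryagin-type argument (as in~\cite{AgrSach,LS,M}) there is a nonzero lift $\lambda\colon[0,1]\to T^*M$ with $\pi\circ\lambda=\gamma$ satisfying the Hamiltonian equations for the abnormal Hamiltonian, together with the constraint $\lambda(t)\big(D_{\gamma(t)}\big)=0$, i.e.\ $\lambda(X)=0$ for all horizontal $X$ along $\gamma$.

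Next I would upgrade this to the Goh condition $\lambda[X,Y]=0$. The standard route is to differentiate the identity $\langle\lambda(t),X(\gamma(t))\rangle\equiv 0$ along $\gamma$, using $\dot\gamma=\sum u^i X_i$ horizontal and the Hamiltonian evolution $\dot\lambda = -\,\mathrm{ad}^*(\dot\gamma)\lambda$ (in coordinates, $\dot\lambda_j = -\lambda_k\,\partial_j(\dot\gamma)^k$); this produces $\sum_i u^i(t)\,\langle\lambda(t),[X_i,X](\gamma(t))\rangle = 0$ for every horizontal $X$. In the general sub-Riemannian setting this only gives one scalar relation, but here the extra ingredient is the Lorentzian/causal structure: the curve $\gamma$ is timelike, hence $\dot\gamma$ is a timelike vector in $D_{\gamma(t)}$, and the controls $u^i$ are not arbitrary — one can perturb $\gamma$ within the cone of nspc.f.d.\ horizontal curves emanating from $p$ in a full-dimensional family while staying causal, and each such admissible variation whose endpoint would escape into $\mathrm{int}(J^+(p))$ is forbidden because $\gamma(1)\in\partial J^+(p)$. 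Exploiting that the accessible variations of the control span enough directions (the timelike cone is open in $D_{\gamma(t)}$), I would conclude that $\langle\lambda(t),[X_i,X_j](\gamma(t))\rangle=0$ for all pairs, which is exactly $\lambda[X,Y]=0$.

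I expect the main obstacle to be precisely this last step: promoting the single bracket identity obtained from plain differentiation to the full Goh condition requires a genuine second-order analysis of the endpoint map restricted to causal controls, i.e.\ controlling the second variation and showing that if some $\langle\lambda,[X_i,X_j]\rangle$ were nonzero one could bend $\gamma$ into the interior of $J^+(p)$ while keeping it nspc.f.d.\ — this is where the causal cone being open (so that one has a two-sided, not one-sided, family of admissible perturbations in each horizontal direction) does the work that index-form/Jacobi arguments do in the Lorentzian case. A clean way to package this is to reverse time and apply Lemma~\ref{lem boundary}: if $\gamma$ ever entered $\mathrm{int}(J^+(p))$ it would stay there, contradicting $\gamma([0,1])\subset\partial I^+(p)$, so every admissible perturbation must be tangent to the boundary to first \emph{and} second order, and feeding this into the variational formula for the endpoint map forces the bracket pairings to vanish. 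I would then just cite~\cite{G1} (and~\cite{AgrSach,LS}) for the technical lemmas on abnormal extremals and the explicit second-variation computation, since those are routine once the causal-cone observation is in place.
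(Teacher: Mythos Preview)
The paper does not give its own proof of this proposition: it is quoted verbatim as a result of Grochowski~\cite{G1} and left without argument. So there is no ``paper's proof'' to compare against; the only relevant question is whether your outline is a plausible reconstruction of the argument.

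Your sketch follows the standard control-theoretic route that is indeed behind Grochowski's result: a curve that stays in $\partial J^+(p)$ is on the boundary of the reachable set of the horizontal nspc.f.d.\ control system, hence is a singular (abnormal) extremal and carries a nonzero annihilating covector $\lambda$ with $\lambda(D)=0$; the Goh condition $\lambda([D,D])=0$ then comes from the second-order necessary conditions for boundary trajectories (Agrachev--Sachkov, Liu--Sussmann). Your identification of the crucial extra ingredient is correct: because $\gamma$ is \emph{timelike}, its velocity lies in the \emph{interior} of the causal cone in $D_{\gamma(t)}$, so the admissible control set has nonempty interior around the reference control, and one therefore has genuinely two-sided variations --- exactly what is needed to force the bilinear bracket form $\langle\lambda,[X_i,X_j]\rangle$ to vanish rather than merely be semidefinite.

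One caution: the paragraph where you ``differentiate $\langle\lambda,X\rangle\equiv 0$'' and obtain $\sum_i u^i\langle\lambda,[X_i,X]\rangle=0$ gives only the relations along the direction $\dot\gamma$, and your subsequent claim that ``accessible variations span enough directions'' is where the real work lies; this is not a first-order computation but the Goh second-variation argument proper, and your appeal to Lemma~\ref{lem boundary} does not by itself supply it. Since you end by deferring those details to~\cite{G1,AgrSach,LS}, your proposal is in effect the same as the paper's: state the result and cite Grochowski.
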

Remember, that if $q\in \partial I^+(p)\cap I^+(p)$ then any t.f.d.\ curve $\gamma$ from $p$ to $q$ must run entirely in $\partial I^+(p)$. The proof of the following proposition is trivial.

\begin{proposition}
Two step generating sub-space-times have no Goh-curves and therefore they are chronologically open.
\end{proposition}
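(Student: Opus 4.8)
The plan is to combine the two preceding propositions. The statement to prove is that a two-step generating sub-space-time has no Goh-curves and is therefore chronologically open. The key observation is that the conclusion "chronologically open" will follow once we know there are no Goh-curves: by the proposition quoted from~\cite{G1}, if some $I^+(p)$ failed to be open, i.e.\ $\partial I^+(p)\cap I^+(p)\neq\emptyset$, there would be a point $q$ in this set and a t.f.d.\ curve $\gamma$ from $p$ to $q$ running entirely in $\partial I^+(p)\cap I^+(p)$, and this $\gamma$ would be a Goh-curve. So the entire content of the proof is the first assertion: a two-step generating distribution admits no Goh-curves.

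First I would unwind the definition of a Goh-curve. Suppose $\gamma\colon I\to M$ were a Goh-curve with associated covector curve $\lambda\colon I\to T^*M$, $\pi\circ\lambda=\gamma$. By definition $\lambda(t)$ annihilates $X(\gamma(t))$ and $[X,Y](\gamma(t))$ for all horizontal vector fields $X,Y$. Now invoke the two-step bracket-generating hypothesis at the point $p=\gamma(t)$: by definition $T_pM$ is spanned by the values $X_i(p)$ together with the brackets $[X_i,X_j](p)$, i.e.\ by depth-one and depth-two iterated brackets only. But $\lambda(t)$ kills all of these, hence $\lambda(t)=0$ as an element of $T^*_{\gamma(t)}M$. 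Since this holds for every $t$, the curve $\lambda$ is identically zero, contradicting the requirement (implicit in the definition, as for abnormal extremals) that $\lambda$ be a nonvanishing — in any case nontrivial — lift. Therefore no Goh-curve exists.

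The only subtlety, and the one place one must be slightly careful, is the convention on what "Goh-curve" requires of $\lambda$: the definition as literally written only asks that $\pi\circ\lambda=\gamma$ and that $\lambda$ annihilate the listed vectors, which the zero section trivially satisfies. The substantive notion — the one relevant to the Pontryagin maximum principle and to the cited results on $\partial I^+(p)$ — insists $\lambda$ be nowhere zero (a nonzero covector curve, as with abnormal biextremals). I would add one sentence making this explicit, so that the contradiction "$\lambda\equiv 0$" is genuinely a contradiction; with that convention in place the argument above is complete. This is indeed why the authors call the proof trivial: there is no computation, only the remark that a two-step distribution leaves no room in $T^*M$ for a nonzero annihilator of $D$ and $[D,D]$.

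Combining the two steps: given a two-step generating sub-space-time $(M,D,g,T)$ and any $p\in M$, if $I^+(p)$ were not open we would produce a Goh-curve by the proposition from~\cite{G1}, contradicting the absence of Goh-curves just established; hence $I^+(p)$ is open for every $p$, and by the time-reversal symmetry already used repeatedly (reversing $T$) also $I^-(p)$ is open for every $p$. Thus $M$ is chronologically open by Definition~\ref{def:precausal}. I do not anticipate a real obstacle here beyond pinning down the nonvanishing convention on $\lambda$; everything else is a direct citation.
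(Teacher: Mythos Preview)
Your proposal is correct and is precisely the argument the paper has in mind; the paper itself gives no proof beyond declaring it trivial, and what you have written is the natural unpacking of that triviality. Your explicit remark about the nonvanishing convention on $\lambda$ is a worthwhile clarification that the paper's stated definition omits.
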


Analogously to Theorem~\ref{pullback finer}, we can test whether a sub-space-time is chronologically open using t.f.d.\ curves.

\begin{theorem}
A sub-space-time $(M,D,g,T)$ is chronologically open if and only if for all t.f.d.\ curves $\gamma\colon I\to M$ the pullback of the Alexandrov topology along $\gamma$ to $I$ is coarser than the standard topology on $I$, considered as a subspace of $\mathbb R$.
\end{theorem}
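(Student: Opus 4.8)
The plan is to characterize chronological openness of $(M,D,g,T)$ directly in terms of the pullback topologies $\gamma^{-1}(\mathscr{A})$ along t.f.d.\ curves. Recall that chronological openness means $I^\pm(p)$ are open in $\tau$ for every $p$, and the subbasis of $\mathscr{A}$ consists of the sets $I^\pm(p)$. So $\gamma^{-1}(\mathscr{A})$ being coarser than the standard topology on $I$ simply means that each $\gamma^{-1}(I^+(p))$ and $\gamma^{-1}(I^-(p))$ is open in $I$, i.e.\ that $t\mapsto \gamma(t)$ is continuous from $I$ (with its standard topology) to $(M,\mathscr{A})$. Thus the theorem asserts: $M$ is chronologically open if and only if every t.f.d.\ curve is continuous into $(M,\mathscr{A})$.

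For the forward direction, suppose $M$ is chronologically open. Then each $I^+(p)$ and $I^-(p)$ is open in $\tau$, hence $\gamma^{-1}(I^\pm(p))$ is open in $I$ by continuity of $\gamma$ in the manifold topology; since these sets generate $\gamma^{-1}(\mathscr{A})$, the pullback topology is coarser than the standard one. This direction is immediate and costs nothing beyond the definitions.

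The substantive direction is the converse. Assume $M$ is \emph{not} chronologically open; then without loss of generality there is a point $p$ and a point $q\in I^+(p)\setminus\operatorname{int}(I^+(p))$. Using Lemma~\ref{lem boundary} (more precisely its contrapositive together with the fact $I^+(p)=\operatorname{int}(J^+(p))$ would force interior points once interior is entered), one sees that a t.f.d.\ curve $\gamma$ from $p$ through $q$ stays in $\partial I^+(p)$ on the portion before reaching any interior point; in particular we can take a t.f.d.\ curve $\gamma\colon(-\varepsilon,\varepsilon)\to M$ with $\gamma(0)=q$ lying in a convex neighbourhood. The idea is then to exhibit an Alexandrov-open set $W$ with $q\in W$ whose preimage $\gamma^{-1}(W)$ fails to be a neighbourhood of $0$ in $I$, so that $\gamma^{-1}(\mathscr{A})$ is strictly finer than the standard topology at $0$. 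The natural candidate is $W=I^+(r)\cap I^-(s)$ for points $r\ll q\ll s$ chosen along the curve $\gamma$ itself: then $q\in W$, yet because $q\notin\operatorname{int}(I^+(p))$ and $I^+(r)\subset I^+(p)$ when $r\in I^+(p)$, the set $W$ is ``thin'' near $q$ in a direction transverse to $\gamma$, and one argues — exactly in the spirit of Example~\ref{specific example1} and Example~\ref{example finer than mfd}, where such $I^+\cap I^-$ sets were computed to be one-dimensional — that $\gamma^{-1}(W)$ need not contain any interval around $0$. More carefully: since $q$ is not interior to $I^+(p)$, arbitrarily close to $q$ there are points not in $I^+(p)\supset I^+(r)$, hence not in $W$; pushing these points back along the past flow as in the proof of Lemma~\ref{lem boundary} one reaches points arbitrarily close to $\gamma(t)$ for small $t<0$ that lie outside $W$, preventing $\gamma^{-1}(W)$ from being a neighbourhood of $0$. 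This yields an Alexandrov-open set whose $\gamma$-pullback is not standard-open, contradicting the hypothesis.

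The main obstacle is the converse direction, and within it the delicate point is ensuring that the witnessing Alexandrov-open set $W=I^+(r)\cap I^-(s)$ genuinely has pullback failing openness at the parameter value $0$ rather than merely being small. This requires combining the flow-continuity argument of Lemma~\ref{lem boundary} (to produce nearby points outside $J^+(p)$, hence outside $I^+(r)$) with a careful choice of $r$ on $\gamma$ close to $q$ so that $I^+(r)\subset I^+(p)$; one should also separately handle the case where the obstruction to openness occurs for a past set $I^-(p)$ rather than a future set, which follows by reversing the time orientation $T$, noting as in Theorem~\ref{th:precausal_transitivity} that this leaves $\mathscr{A}$ unchanged. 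Once these points are pinned down, the equivalence follows by contraposition.
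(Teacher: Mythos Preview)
Your forward direction is correct and coincides with the paper's argument.

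The converse, however, has a genuine gap. You take a t.f.d.\ curve $\gamma\colon(-\varepsilon,\varepsilon)\to M$ with $\gamma(0)=q$ and propose $W=I^+(r)\cap I^-(s)$ with $r=\gamma(t_0)$, $s=\gamma(t_1)$ chosen on $\gamma$ itself (so $t_0<0<t_1$). But then for every $t\in(t_0,t_1)$ we have $r=\gamma(t_0)\ll\gamma(t)\ll\gamma(t_1)=s$ simply because $\gamma$ is t.f.d., hence $(t_0,t_1)\subset\gamma^{-1}(W)$ and $\gamma^{-1}(W)$ \emph{is} a neighbourhood of $0$. Your argument that ``points arbitrarily close to $\gamma(t)$ lie outside $W$'' only shows that $W$ is not $\tau$-open near $\gamma(t)$; it says nothing about whether the specific points $\gamma(t)$ themselves lie in $W$, which is all that matters for $\gamma^{-1}(W)$. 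You have conflated non-openness of $W$ in $M$ with non-openness of $\gamma^{-1}(W)$ in $I$.

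The paper proceeds quite differently. It works with the subbasic set $I^+(p)$ itself and constructs a \emph{specific} t.f.d.\ curve $\gamma\colon[0,1]\to M$ with $\gamma(1)=q$ that approaches $q$ entirely from \emph{outside} $I^+(p)$, so that $\gamma^{-1}(I^+(p))=\{1\}$. The key observation enabling this is that $\mathrm{int}(I^-(q))\cap I^+(p)=\emptyset$: if $r\in\mathrm{int}(I^-(q))$ satisfied $p\ll r$, then $p\ll r\ll_o q$ would force $q\in\mathrm{int}(I^+(p))$ via the strong transitivity of $\ll_o$, contradicting $q\in\partial I^+(p)$. One then picks a sequence $r_1\ll r_2\ll\cdots$ in $\mathrm{int}(I^-(q))$ converging to $q$, concatenates t.f.d.\ arcs $r_n\to r_{n+1}$ (each such arc stays in $\mathrm{int}(I^-(q))$ by the same transitivity, hence outside $I^+(p)$), and extends continuously to $\gamma(1)=q$. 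This is the missing idea in your attempt: the witnessing curve must be built to avoid $I^+(p)$ except at its endpoint, and a curve passing through $q$ along which you pick $r,s$ cannot do this.
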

\begin{proof}
It is clear that, if $\mathscr{A}\subset\tau_M$, then $\gamma\inv(\mathscr{A})\subset\gamma\inv(\tau_M) \subset \tau_I$, as $\gamma$ was assumed to be continuous to $(M,\tau_M)$. If on the other hand, $\mathscr{A}\not\subset \tau_M$, then there is a point $p\in M$, such that $I^+(p)$ or $I^-(p)$ is not open. Without loss of generality we may assume, that $I^+(p)$ is not open. Then there is a point $q\in I^+(p)\cap \partial I^+(p)$. 

Now take $r\in \mathrm{int}(I^-(q))$. If $r\in I^+(p)$, then $p\ll r\ll_o q \Ra q\in \mathrm{int}(I^+(p))$, which possesses a contradiction, so $r\notin I^+(p)$. In this way we can construct a sequence of points $r_1\ll r_2\ll...\ll q$ converging to $q$, such that $r_i\notin I^+(p)$. By assumption there are t.f.d.\ curves 
\[
\gamma_n\colon\left[0,2^{-n}\right]\to M, \qquad \gamma_n(0) = r_n,\qquad \gamma_n\left(2^{-n}\right) = r_{n+1}.
\]
We can consider the continuous t.f.d.\ curve $\gamma\colon[0,1)\to M$ that is obtained by 

\[\gamma(t) = \gamma_n\left(t-\sum_{k=0}^{n-1}2^{-k}\right), \qquad t\in\left[\sum_{k=0}^{n-1}2^{-k},\sum_{k=0}^n 2^{-k}\right]\]
By Corollary~\ref{regularity}, the curve $\gamma$ is absolutely continuous with nspc.f.d.\ derivative almost everywhere. Moreover, since all segments of $\gamma$ have t.f.d.\ derivatives almost everywhere, so does $\gamma$.

Clearly $\gamma$ can be continuously extended to $\gamma\left(1\right) = q$, and then $\gamma\inv(I^+(p)) = \left\{1\right\}$, which is not open in $[0,1]$.
\end{proof}


\section{The Time-Separation Topology}\label{sec:Separation}


In Lorentzian geometry there is a relation between causality and the time separation function $T^S$ defined in~\eqref{eq:time_separation}. To emphasise the analogy between the Lorentzian distance function and the Riemannian distance function define the outer balls
\[
O^+(p,\epsilon) = \{q\in M \:|\: T^S(p,q)>\epsilon\}, \qquad O^-(p,\epsilon) = \{q\in M\:|\:T^S(q,p)>\epsilon\}.
\]
The outer balls suggest a way to introduce a topology related to the function $T^S$.

\begin{definition}
The topology $\tau_\ts$ created by the subbasis 
\[
\mathscr{S} = \left\{O^\pm(p,\epsilon)\vert\ p\in M,\ \epsilon>0\right\}
\]
is called time separation topology. 
\end{definition}

\begin{proposition}\label{th:AlexOut_equivalence}\cite{BEE96}
Let $(M,g,T)$ be a space-time. Then $\tau_\ts = \mathscr{A} = \mathscr{A}_o$.
\end{proposition}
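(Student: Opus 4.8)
The plan is to prove the chain of equalities $\tau_\ts = \mathscr{A} = \mathscr{A}_o$ for a space-time by exploiting the classical Lorentzian facts that the sets $I^\pm(p)$ are open (so $\mathscr{A} = \mathscr{A}_o$ is immediate, since $\mathrm{int}(I^+(p)) = I^+(p)$ and the two bases coincide) and that the time separation function $T^S$ is lower semicontinuous and satisfies $T^S(p,q) > 0 \LRa q \in I^+(p)$. Hence the real content is the equality $\tau_\ts = \mathscr{A}$, which I would establish by showing each subbasis is contained in the other topology.

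First I would show $\tau_\ts \subset \mathscr{A}$, i.e.\ that each outer ball $O^+(p,\epsilon)$ is open in the Alexandrov topology. Take $q \in O^+(p,\epsilon)$, so $T^S(p,q) > \epsilon$. By definition of the supremum there is a nspc.f.d.\ curve from $p$ to $q$ of length exceeding $\epsilon$; since this curve has positive length it is not a null geodesic, so by Proposition~\ref{transitiv} there is a timelike curve from $p$ to $q$, giving $p \ll q$, i.e.\ $q \in I^+(p)$. More is true: I would pick a point $r$ slightly before $q$ along a near-maximal causal curve so that $T^S(p,r) > \epsilon$ and $r \ll q$, and a point $s$ slightly after $q$; then the reverse triangle inequality $T^S(p,s) \geq T^S(p,r) + T^S(r,s)$ together with $T^S(r,s) \geq 0$ and the fact that any $q' \in I^+(r) \cap I^-(s)$ satisfies $T^S(p,q') \geq T^S(p,r) + T^S(r,q') \geq \epsilon + 0$ — here one must be slightly careful and instead choose $r$ with $T^S(p,r) > \epsilon$ strictly, so that $q' \in I^+(r)$ forces $T^S(p,q') \geq T^S(p,r) > \epsilon$ — shows $q \in I^+(r) \cap I^-(s) \subset O^+(p,\epsilon)$. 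This exhibits a basic Alexandrov neighbourhood of $q$ inside $O^+(p,\epsilon)$; the symmetric argument handles $O^-(p,\epsilon)$.

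Next I would show $\mathscr{A} \subset \tau_\ts$, i.e.\ that each $I^+(p)$ is open in the time separation topology. Given $q \in I^+(p)$, there is a timelike curve from $p$ to $q$, so $T^S(p,q) > 0$; choosing a point $p'$ on this curve strictly between $p$ and $q$ but close to $p$, we have $p \ll p' \ll q$ and $T^S(p',q) > 0$. Then for any $\epsilon$ with $0 < \epsilon < T^S(p',q)$ we get $q \in O^+(p',\epsilon)$, and by the reverse triangle inequality every $q' \in O^+(p',\epsilon)$ satisfies $p' \ll q'$ hence $p \ll q'$, so $O^+(p',\epsilon) \subset I^+(p)$. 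This gives a $\tau_\ts$-open neighbourhood of $q$ inside $I^+(p)$; again the past case is symmetric.

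The main obstacle, and the place requiring the most care, is the first inclusion: ensuring that the strict inequality $T^S > \epsilon$ is preserved when passing to a nearby point, which forces one to choose the auxiliary point $r$ with $T^S(p,r)$ strictly larger than $\epsilon$ rather than merely $\geq \epsilon$, and to invoke Proposition~\ref{transitiv} to upgrade a positive-length causal curve to a genuinely timelike one so that $r$ actually lies in the open set $I^+(p)$ used to build the Alexandrov neighbourhood. Since all of these ingredients — openness of $I^\pm$, the characterization $T^S(p,q)>0 \LRa q \in I^+(p)$, lower semicontinuity of $T^S$, and Proposition~\ref{transitiv} — are standard in Lorentzian geometry and available here, the proof reduces to assembling them, and one may simply cite \cite{BEE96} for the full details while indicating this strategy.
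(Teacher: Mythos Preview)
Your proposal is correct, and in fact it supplies more than the paper does: the paper offers no argument whatsoever for this proposition, treating it as a classical Lorentzian fact and simply citing \cite{BEE96}. Your sketch---using openness of $I^\pm$ to get $\mathscr{A}=\mathscr{A}_o$, the equivalence $T^S(p,q)>0\Leftrightarrow p\ll q$ from Proposition~\ref{transitiv}, the reverse triangle inequality, and lower semicontinuity of $T^S$ to produce the auxiliary point $r$ with $r\ll q$ and $T^S(p,r)>\epsilon$---is the standard route and is sound, so concluding with the citation exactly matches the paper's treatment.
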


As a consequence of Proposition~\ref{transitiv} in space-times we have $\ts(p,q) > 0 \LRa q\in I^+(p)$. Clearly, the implication from right to left still holds in sub-space-times, but the following example shows that the converse direction fails for some sub-space-times, and that Proposition~\ref{th:AlexOut_equivalence} can not be extended.

\begin{example}\label{weirdstuff1}
Take the sub-space-time $M=\R^3$ with the bracket generating distribution generated by $X={\partial y} + x^2{\partial z}$, $Y={\partial x}$. Define a metric $g(X,X) = a$, $g(X,Y) = b$, $g(Y,Y) = 1$, where functions $a,b\in C^\infty(\R^3)$ are such that
\[a
\leq 0,\qquad a-b^2 < 0, \qquad a(0,y,0) = \begin{cases}-1&0\leq y\leq \frac{1}{3}\\ 0 &\frac{2}{3}\leq y\leq 1.\end{cases}
\]
The metric $g$ is sub-Lorentzian since the matrix $g = \begin{pmatrix}a&b\\b&1\end{pmatrix}$
has one positive and one negative eigenvalue due to $\mathrm{det}(g) = a-b^2 < 0$. The vector field $T= X-b\:Y$ will serve as time orientation. Indeed, 
\begin{align*}
g(T,X) &= g(X-bY,X)= g(X,X) - b\cdot g(X,Y)= a-b^2<0,\\
g(T,Y) &= g(X,Y) - b\:g(Y,Y)= b-b= 0,\\
\Longrightarrow\qquad g(T,T) &= g(X,T) - b\:g(Y,T)= a-b^2<0.
\end{align*}
In this sub-space-time $\ts(0,\:(0,1,0)\:) >0$ but $(0,1,0)\notin I^+(0)$. Also $0\ll (0,\frac{1}{3},0)$ and $(0,\frac{1}{3},0)\leq (0,1,0)$ but $0\not\ll (0,1,0)$.
To prove it we consider $\gamma\colon [0,1]\to M$, $\gamma(t) = (0,t,0)$. We know that $\dot\gamma(t) = X\big(\gamma(t)\big)$, and it implies $g(\dot\gamma,\dot\gamma) = a \leq 0$ and $g(\dot\gamma,T) = a-b^2 < 0$. Thus, the curve $\gamma$ is nspc.f.d.\ and has positive length $L(\gamma) \geq \frac{1}{3}$, and it is t.f.d.\ between $(0,\frac{2}{3},0)$ and $(0,1,0)$.

Like in Example~\ref{specific example1} one sees that the curve $\gamma$ is the only  f.d.\ curve connecting $0$ and $(0,1,0)$ up to monotone reparametrization. However, monotone reparametrization does not influence causal character.
\end{example}

\begin{theorem}
There are sub-space-times where $\tau_\ts \neq \mathscr{A}$.
\end{theorem}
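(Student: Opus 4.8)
The strategy is to exhibit a single sub-space-time in which the time-separation topology $\tau_\ts$ differs from the Alexandrov topology $\mathscr{A}$, and Example~\ref{weirdstuff1} already supplies the right ambient manifold. The point is that in that example we have the pathology $\ts(0,(0,1,0))>0$ while $(0,1,0)\notin I^+(0)$. I would exploit exactly this: the outer ball $O^+(0,\epsilon)$ for small $\epsilon>0$ contains the point $(0,1,0)$, whereas any basic set $I^+(p)\cap I^-(q)$ of the Alexandrov topology containing $(0,1,0)$ cannot be contained inside $O^+(0,\epsilon)$ — or, more directly, I would show that the specific set $O^+(0,\epsilon)$ (or some intersection of outer balls) is not open in $\mathscr{A}$, while it is by definition a subbasis element of $\tau_\ts$.

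The key steps, in order, are as follows. First I would record that the curve $\gamma(t)=(0,t,0)$ is, up to monotone reparametrisation, the unique future-directed curve from $0$ to $(0,1,0)$, and that along it the length picks up a definite positive contribution from the region where $a(0,y,0)=-1$, so $\ts(0,(0,1,0))\geq \tfrac13>0$; hence $(0,1,0)\in O^+(0,\epsilon)$ for every $\epsilon<\tfrac13$. Second, I would observe that $(0,1,0)\notin I^+(0)$: since $\gamma$ is the only f.d.\ curve from $0$ to $(0,1,0)$ and it fails to be timelike near its endpoint (there $g(\dot\gamma,\dot\gamma)=a(0,y,0)=0$), there is no t.f.d.\ curve from $0$ to $(0,1,0)$. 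Third — and this is where one must think a little — I would argue that $(0,1,0)$ has no $\mathscr{A}$-neighbourhood contained in $O^+(0,\epsilon)$: any subbasic set $I^-(q)$ containing $(0,1,0)$ must, by openness considerations analogous to Example~\ref{specific example1}, contain points of the form $(0,1,0)+(0,\delta,-a)$ with $a>0$; such points lie outside $I^+(0)$ for the analogous reason, but one checks they are \emph{not} in $O^+(0,\epsilon)$ either because the only candidate connecting curve is again a reparametrisation of a segment of the $z=0$, $x=0$ line, which cannot reach a point with $z<0$. Alternatively, and perhaps more cleanly, I would simply show $\mathscr{A}\not\subset\tau_\ts$ or $\tau_\ts\not\subset\mathscr{A}$ by comparing the two subbases directly: the set $I^+(0)$ is Alexandrov-open but, because $\ts(0,\cdot)>0$ strictly on a larger set than $I^+(0)$, it cannot be written as a union of outer balls $O^+(p,\epsilon)$.

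I expect the main obstacle to be the third step: pinning down precisely which points lie in the relevant outer balls near $(0,1,0)$, since this requires understanding the time-separation function — a supremum over f.d.\ curves — rather than just the causal relation. The cleanest route is probably to leverage the rigidity already established in Example~\ref{weirdstuff1} (the $z=0$, $x=0$ line is the unique f.d.\ curve between its endpoints) to conclude that $O^+(0,\epsilon)\cap\{x=0\}$ is contained in a single line segment with $z=0$, hence this segment, which is Alexandrov-open, witnesses $\tau_\ts\subsetneq\mathscr{A}$ on the slice $\{x=0\}$ while $(0,1,0)\in O^+(0,\epsilon)\setminus I^+(0)$ witnesses that the inclusion is strict in the relevant direction; combining these shows $\tau_\ts\neq\mathscr{A}$. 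A careful but routine verification that the functions $a,b$ with the prescribed properties exist (e.g.\ by an explicit bump-function construction) closes the argument.
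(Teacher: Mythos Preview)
Your first two steps are fine and coincide with what the paper sets up in Example~\ref{weirdstuff1}: $p=(0,1,0)$ lies in $O^+(0,\epsilon)$ for $\epsilon<\tfrac13$, and $p\notin I^+(0)$. The goal of your third step --- showing that every $\mathscr{A}$-neighbourhood of $p$ contains points outside $O^+(0,\epsilon)$ --- is also the right one. But your justification for it does not work. You invoke ``openness considerations analogous to Example~\ref{specific example1}'' to claim that any $I^-(q)$ containing $p$ must contain points $(0,1+\delta,-a)$. Example~\ref{specific example1}, however, shows that certain nearby points are \emph{not} in $I^+(0)$; it is a non-openness result. It gives you no information about which points \emph{are} in $I^-(q)$ or $I^+(p')$ for $q,p'$ near $p$, and that is precisely what you need. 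Your two alternatives fare no better: that $\{\ts(0,\cdot)>0\}$ is strictly larger than $I^+(0)$ does not by itself prevent $I^+(0)$ from being a union of outer balls $O^\pm(p',\epsilon')$ centred at \emph{other} points, and restricting to the slice $\{x=0\}$ does not yield a statement about the topologies on $M$.

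The missing idea is local: near $p$ one has $a\equiv 0$, so the vector field $X$ is null there and every t.f.d.\ curve through $p$ must have a nontrivial $Y$-component, hence immediately leaves $\{x=0\}$. Away from $\{x=0\}$ the distribution is two-step generating, so there are no nonspacelike Goh-curves near $p$ except null ones. By the Goh-curve criterion (Proposition following Definition of Goh-curves), this forces $p\in\mathrm{int}(I^\pm(q))$ whenever $p\in I^\pm(q)$. Consequently every $\mathscr{A}$-open set containing $p$ contains a full $\tau$-neighbourhood $V$ of $p$. Since $O^+(0,\epsilon)\subset J^+(0)$ and $p\in\partial J^+(0)$, any such $V$ meets the complement of $O^+(0,\epsilon)$, so $O^+(0,\epsilon)\notin\mathscr{A}$. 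That is the paper's argument, and it supplies exactly the step you flagged as the main obstacle.
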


\begin{proof}
Take the sub-space-time from Example~\ref{weirdstuff1}. Here $p=(0,1,0)\in O^+\left(0,\frac{1}{4}\right)$. However, since $a\equiv 0$ around $p$ and since the distribution is two-step generating away from $\{x=0\}$ the only possible nspc.\ Goh-curves around $p$ are null. Therefore, if $p\in I^+(q)$, it cannot lie on the boundary, so $p\in \mathrm{int}(I^+(q))$ and the same for $I^-(q)$. Hence any neighbourhood $U$ of $p$, that is open in the Alexandrov topology, contains a whole neighbourhood $V\subset U$ of $p$ in the manifold topology. But $p\in \partial I^+(0)\subset\overline{I^+(0)}$ and therefore also $p\in \partial O^+(0,\frac{1}{4})$ which means $V\not\subset O^+(0,\frac{1}{4})$.
Therefore the set $O^+(0,\frac{1}{4})$ is not open in the Alexandrov topology.
\end{proof}

Note that the sub-space-time of Example~\ref{weirdstuff1}, neither $\mathscr{A}=\tau$ nor $\tau_\ts = \tau$, as $O^+(0,\quarter)$ and $I^+(0)$ are not open in the manifold topology. In~\cite{G1} the existence of chronologically open sub-space-times where $\tau_\ts \neq \tau$ is stated. We do not know either examples, where $\mathscr{A}\not\subset \tau_\ts$, or a proof excluding this case.

\begin{theorem}
Let $(M,D,g,T)$ be a sub-space-time with manifold topology $\tau$. Then  always $\mathscr A_o\subset \tau_{\ts}$ and $\tau_\ts \subset\tau$ implies $\mathscr{A}\subset\tau$. If $\tau_\ts = \tau$, then also $\tau = \mathscr{A}$. If $\tau_\ts\subset \tau$ and $\mathscr{A}=\tau$, then also $\tau_\ts=\tau$.
\end{theorem}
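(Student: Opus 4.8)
The plan is to prove the four assertions in order, using the general inclusion $\mathscr{A}_o\subset\mathscr{A}$ established earlier together with Lemma~\ref{lem open relations} and the outer-ball description of $\tau_\ts$. First I would show $\mathscr{A}_o\subset\tau_\ts$ unconditionally. It suffices to show that a basic set $\mathrm{int}(I^+(p))\cap\mathrm{int}(I^-(q))$ is open in $\tau_\ts$, and for this I would take an arbitrary point $r$ in this set and produce outer balls around suitable points containing $r$ and contained in the basic set. Since $r\in\mathrm{int}(I^+(p))$, by Lemma~\ref{lem open relations} we have $p\in\mathrm{int}(I^-(r))$, and we can pick (as in the remark following that lemma) a point $p'$ with $p\ll_o p'\ll_o r$ lying on a t.f.d.\ curve through $p$ and $r$; the segment from $p'$ to $r$ is timelike of positive length, so $\ts(p',r)>0$, i.e.\ $r\in O^+(p',\eps)$ for some $\eps>0$. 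Symmetrically pick $q'$ with $r\ll_o q'\ll_o q$ and $\eps'>0$ with $r\in O^-(q',\eps')$. The key point is then that $O^+(p',\eps)\cap O^-(q',\eps')\subset\mathrm{int}(I^+(p))\cap\mathrm{int}(I^-(q))$: indeed if $\ts(p',s)>0$ then $p'\ll s$, and combined with $p\ll_o p'$ and transitivity (Theorem on $\ll_o$, part (3)) we get $p\ll_o s$, i.e.\ $s\in\mathrm{int}(I^+(p))$; symmetrically for the past. Hence $r$ has a $\tau_\ts$-neighbourhood inside the basic set, proving $\mathscr{A}_o\subset\tau_\ts$.

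For the second assertion, assume $\tau_\ts\subset\tau$. We must show $\mathscr{A}\subset\tau$, i.e.\ that each $I^+(p)$ is $\tau$-open. The idea is that $\ts(p,q)>0\Ra q\in I^+(p)$ always holds, so $O^+(p,\eps)\subset I^+(p)$; and conversely, if $q\in I^+(p)$ then there is a t.f.d.\ curve from $p$ to $q$, and choosing a point $p''$ slightly before $p$ along an extension of this curve we get $\ts(p'',q)>0$, so $q\in O^+(p'',\eps)\subset$ — here the subtlety is whether $O^+(p'',\eps)\subset I^+(p)$, which is not automatic. The cleaner route is: $I^+(p)=\bigcup\{O^+(r,\eps)\mid r\ll p,\ \eps>0\}$? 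That inclusion $\supset$ may fail. I would instead argue that for $q\in I^+(p)$ there is, by the remark after Lemma~\ref{lem open relations} applied at $p$ along a t.f.d.\ curve, a point $r$ with $r\ll p$ and $q\in\mathrm{int}(I^+(r))$; but $\mathrm{int}(I^+(r))$ is $\tau$-open by definition, and to link it to $\tau_\ts$ I would note $\mathrm{int}(I^+(r))\subset\mathscr{A}_o$-open $\subset\tau_\ts$-open $\subset\tau$-open, and $q\in\mathrm{int}(I^+(r))\subset I^+(p)$ using transitivity of $\ll_o$ with $\leq$. So $I^+(p)$ is a union of $\tau$-open sets, hence $\tau$-open; symmetrically for $I^-(p)$, giving $\mathscr{A}\subset\tau$.

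The third assertion, $\tau_\ts=\tau\Ra\tau=\mathscr{A}$: from $\tau_\ts=\tau$ we get $\tau_\ts\subset\tau$, hence $\mathscr{A}\subset\tau$ by the previous part; we need the reverse $\tau\subset\mathscr{A}$. Here I would use that $\tau_\ts=\tau$ forces strong causality — this is essentially the content of the proof that $\mathscr{A}_o$ Hausdorff implies strongly causal (Theorem~\ref{th: cc-transitive}): since $\mathscr{A}_o\subset\tau_\ts=\tau$ and $\tau$ is Hausdorff, and conversely the outer balls shrink to points, one recovers causal convexity of small neighbourhoods; more directly, $\tau_\ts=\tau$ gives $\mathscr{A}_o=\tau$ (squeezing $\mathscr{A}_o\subset\tau_\ts=\tau$ against $\tau\subset\mathscr{A}_o$ which follows once we know arbitrarily small outer balls exist around each point and are $\tau$-open of the form we need), and then Theorem~\ref{th: cc-transitive} gives strong causality, whence Corollary~\ref{Alexandrov Hausdorff} gives $\tau\subset\mathscr{A}$. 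The fourth assertion is then immediate: if $\tau_\ts\subset\tau$ and $\mathscr{A}=\tau$, then $\tau=\mathscr{A}\subset\tau_\ts$ (since $\mathscr{A}_o\subset\tau_\ts$ is not enough — one uses here $\mathscr{A}=\tau$ and that $\tau\subset\tau_\ts$ follows because strong causality holds and hence $\tau=\mathscr{A}_o\subset\tau_\ts$), combined with $\tau_\ts\subset\tau$ to conclude $\tau_\ts=\tau$.

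The main obstacle I anticipate is the bookkeeping around the non-symmetric, non-open behaviour of $I^{\pm}$: every time I want to pass between $\ts>0$, $\ll$, $\ll_o$, and interiors, I must insert an auxiliary point along a t.f.d.\ curve and invoke exactly the right transitivity rule, and I must be careful that outer balls $O^{\pm}$ land inside the correct Alexandrov sets. Getting the direction $\tau\subset\tau_\ts$ (needed for assertions three and four) is the genuinely delicate point, because it is not formal and seems to require going through strong causality via Theorem~\ref{th: cc-transitive}; I would make sure the logical chain $\tau_\ts=\tau\Rightarrow\mathscr{A}_o=\tau\Rightarrow$ strongly causal $\Rightarrow\mathscr{A}=\tau$ is airtight, since everything else reduces to the unconditional inclusion $\mathscr{A}_o\subset\tau_\ts$ proved in the first paragraph.
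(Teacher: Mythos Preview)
Your first part ($\mathscr{A}_o\subset\tau_\ts$) is essentially the paper's argument, though you slip when you write ``$\ts(p',s)>0$ then $p'\ll s$'': Example~\ref{weirdstuff1} shows this implication fails in sub-space-times. What you actually get is $p'\leq s$, and then you need part~(2) of the $\ll_o$-transitivity theorem (not part~(3)) to conclude $p\ll_o s$ from $p\ll_o p'\leq s$. With that correction the argument goes through.

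There is a genuine gap in your second part. You eventually claim that for $r\ll p$ one has $\mathrm{int}(I^+(r))\subset I^+(p)$, but the inclusion runs the other way: $r\ll p$ gives $I^+(p)\subset I^+(r)$, not the reverse. None of the auxiliary points you try (before $p$, or with $r\ll p$) can produce a $\tau$-open set sandwiched between $q$ and $I^+(p)$. The paper's argument is more direct and avoids auxiliary points entirely: under the hypothesis $\tau_\ts\subset\tau$, each $O^+(p,\eps)$ is $\tau$-open and contained in $J^+(p)$, hence in $\mathrm{int}(J^+(p))=\mathrm{int}(I^+(p))\subset I^+(p)$; conversely any $q\in I^+(p)$ has $\ts(p,q)>0$, so $q\in O^+(p,\eps)$ for small~$\eps$. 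Thus $I^+(p)=\bigcup_{\eps>0}O^+(p,\eps)$ is a union of $\tau$-open sets.

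Your third part also has a gap. You try to reach $\tau\subset\mathscr{A}$ by first establishing strong causality via Theorem~\ref{th: cc-transitive}, which needs $\mathscr{A}_o$ to be Hausdorff (equivalently $\mathscr{A}_o=\tau$). But from $\tau_\ts=\tau$ you only know $\mathscr{A}_o\subset\tau_\ts=\tau$; the reverse inclusion $\tau\subset\mathscr{A}_o$ is exactly what you are trying to prove, so the argument is circular. The paper instead shows directly that every outer ball is $\mathscr{A}$-open: given $q\in O^+(p,\eps)$, since $O^+(p,\eps)$ is $\tau$-open one can pick $r\in I^-(q)\cap O^+(p,\eps)$; then for any $s\in I^+(r)$ the inverse triangle inequality gives $\ts(p,s)\geq\ts(p,r)+\ts(r,s)>\eps$, so $I^+(r)\subset O^+(p,\eps)$. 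This yields $\tau=\tau_\ts\subset\mathscr{A}$, and combined with part two, $\mathscr{A}=\tau$.

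Your fourth part is fine and matches the paper once you note that $\mathscr{A}=\tau$ forces $M$ to be chronologically open, so $\mathscr{A}_o=\mathscr{A}=\tau\subset\tau_\ts$.
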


\begin{proof}
Take a set $A\in\mathscr A_o$ and a point $p\in A$. Without loss of generality $A= \mathrm{int}(I^+(q))\cap\mathrm{int}(I^-(r))$ for some points $q,r\in M$. Take a t.f.d.\ curve $\gamma:[0,1]\to M$ such that $\gamma(0) = q, \gamma(\frac12)=p, \gamma(1)=r$ and define $p_1:= \gamma(\frac14), p_2:=\gamma(\frac34)$. By the transitivity of $\leq,\ll_o$ we obtain

\[p\in \quad O^+\left(p_1,2^{-1}\ts(p_1,p)\right)\cap O^-\left(p_2,2^{-1}\ts(p,p_2)\right) \quad\subset\quad J^+(p_1)\cap J^-(p_2)\quad\subset A\]
Concerning the second statement, we have $O^+(p,\epsilon)\subset\mathrm{int}(J^+(p)) = \mathrm{int}(I^+(p))$ because $O^+(p,\epsilon) \subset J^+(p)$ and $O^+(p,\epsilon)$ is open. Therefore
\[\bigcup_{\epsilon>0}O^+(p,\epsilon) \subset I^+(p).
\]
But for any $q\in I^+(p)$ the value $T^S(p,q)$ is positive or infinite, so $I^+(p)\subset\bigcup_{\epsilon>0}O^+(p,\epsilon)$.
This implies that $I^+(p) = \bigcup_{\epsilon>0}O^+(p,\epsilon)$ and $I^+(p)$ is open in the manifold topology. 

Now assume that $\tau_\ts=\mathscr{A}$. Take $p\in M$, $\epsilon>0$, and any $q\in O^+(p,\epsilon)$. Every t.p.d.\ curve $\gamma$ starting at $p$ will initially lie in $O^+(p,\epsilon)$, as the set is open. So there is a point $r\in I^-(q)\cap O^+(p,\epsilon)$. As for $s\in I^+(r)$ we find that $\ts(p,s)\geq \ts(p,r) + \ts(r,s) > \ts(p,r) >\epsilon$, we see that $s\in O^+(p,\epsilon)$. This means that $q\in I^+(r)\subset O^+(p,\epsilon)$. The same argument holds for past outer balls, so we find $\tau=\tau_\ts \subset\mathscr{A}$. Combining this with the first statement, we obtain $\mathscr{A}=\tau$.

Now assume that $\tau_\ts\subset\tau$ and $\mathscr{A}=\tau$. Take any point $p\in M$ and $q\in I^+(p)$. Clearly $\ts(p,q)>0$, possibly infinite. Choose $\epsilon = \half \ts(p,q)$, if the time separation is finite and choose any $\epsilon>0$ otherwise. Then $q\in O^+(p,\epsilon)\subset J^+(p)$ clearly. Since $O^+(p,\epsilon)$ is open, it even holds that $O^+(p,\epsilon) = \mathrm{int}(O^+(p,\epsilon))\subset \mathrm{int}(J^+(p))= \mathrm{int}(I^+(p)) = I^+(p)$, hence $q\in O^+(p,\epsilon)\subset I^+(p)$, which means that $\tau=\mathscr{A}\subset\tau_\ts$, and finally $\tau=\tau_\ts$.
\end{proof}



\begin{thebibliography}{99}

\bibitem{ABB} {\sc Agrachev~A.; Barilary~D.; Boscain~U.}, {\it Introduction to Riemannian and sub-Riemannian geometry.} Manuscript in preparation, available on the web-site: http://www.cmapx.polytechnique.fr/~barilari/Notes.php

\bibitem{AgrSach}
{\sc Agrachev~A.~A.; Sachkov~Y.~L.}, {\it Control theory from the geometric viewpoint}. Encyclopaedia of Mathematical Sciences, 87. Control Theory and Optimization, II. Springer-Verlag, Berlin, 2004. pp. 412.

\bibitem{BEE96}
{\sc Beem~J.~K.; Ehrlich~P.~E.; Easley~K.~L.}, {\it Global Lorentzian geometry},
 New York, Dekker, 1996, Pure and applied mathematics, no. 202,

\bibitem{CDPT}
{\sc Capogna~L.; Danielli~D.; Pauls~S.~D.; Tyson~J.~T.}, {\it An introduction to the Heisenberg group and the sub-Riemannian isoperimetric problem}. Progress in Mathematics, 259. Birkh\"auser Verlag, Basel, 2007. pp. 223.

\bibitem{CMV}
{\sc Chang~D.~C.; Markina~I.; Vasil'ev~A.}, {\it Sub-Lorentzian geometry on anti-de Sitter space}, J. Math. Pures Appl. (9) {\bf 90} (2008), no.~1, pp. 82--110.

\bibitem{C} {\sc Chow~W.~L.}, {\it \"Uber Systeme von linearen partiellen Differentialgleichungen erster Ordnung}. Math. Ann. {\bf 117} (1939), pp. 98--105.

\bibitem{G} 
{\sc Grochowski~M.}, {\it Reachable sets for the Heisenberg sub-Lorentzian structure on ${\mathbb R}^3$. An estimate for the distance function.} J. Dyn. Control Syst. {\bf 12} (2006), no. 2, pp. 145--160.

\bibitem{G1} 
{\sc Grochowski~M.}, {\it 
Properties of reachable sets in the sub-Lorentzian geometry.} J. Geom. Phys. {\bf 59} (2009), no. 7, pp. 885--900.

\bibitem{G2} 
{\sc Grochowski~M.}, {\it Some properties of reachable sets for control affine systems},
Anal. Math. Phys. {\bf 1} (2011), no. 1,pp. 3--13.

\bibitem{GV}
{\sc Grong~E.; Vasil'ev~A.},
{\it Sub-Riemannian and sub-Lorentzian geometry on $SU(1,1)$ and on its universal cover.}
J. Geom. Mech. {\bf 3} (2011), no. 2, pp. 225--260. 

\bibitem{HawEllis}
{\sc Hawking~S.~W.; Ellis~G.~F.~R.}, {\it The large scale structure of space-time.} Cambridge Monographs on Mathematical Physics, No. 1. Cambridge University Press, London-New York, 1973. pp.~391.

\bibitem{KM1} {\sc Korolko~A.; Markina~I.}, {\it Nonholonomic Lorentzian geometry on some H-type groups}. J. Geom. Anal. {\bf 19} (2009), no. 4, pp. 864--889.

\bibitem{KM2} {\sc Korolko~A.; Markina~I.}, {\it Geodesics on H-type quaternion groups with sub-Lorentzian metric and their physical interpretation}. Complex Anal. Oper. Theory {\bf 4} (2010), no. 3, pp. 589--618.

\bibitem{KM3} {\sc Korolko~A.; Markina~I.}, {\it Semi-Riemannian geometry with nonholonomic constraints}. Taiwanese J. Math. {\bf 15} (2011), no. 4, pp. 1581--1616.

\bibitem{LS} {\sc Liu~W.; Sussmann~H.~J.}, {\it Shortest paths for sub-Riemannian metrics on rank-two distributions}. Mem. Amer. Math. Soc. {\bf 118} (1995), pp. 104.

\bibitem{M} 
{\sc Montgomery~R.}, {\it A tour of subriemannian geometries, their geodesics and applications}. Mathematical Surveys and Monographs, 91. American Mathematical Society, Providence, RI, 2002.

\bibitem{M94} 
{\sc Montgomery~R.}, {\it Abnormal Minimizers}. SIAM Journal on Control and Optimization {\bf 32} (1994), pp. 1605--1620.

\bibitem{ON}
{\sc O'Neill~B.}, {\it Semi-Riemannian geometry. With applications to relativity}. Pure and Applied Mathematics, 103. Academic Press, Inc. 

\bibitem{Penr}
{\sc Penrose~R.}, {\it Techniques of differential topology in relativity}.
Philadelphia, Pa., Soc. for Industrial and Applied Math., 1972, Regional conference series in applied mathematics, no.~7.

\bibitem{R} 
{\sc Rashevski{\u\i}~P.~K.}, {\it About connecting two points of complete nonholonomic space by admissible curve}, Uch. Zapiski Ped. Inst. K.~Liebknecht {\bf 2} (1938), pp. 83--94.

\bibitem{Str}
{\sc Strichartz~R.~S.}, {\it Sub-Riemannian geometry}. J. Differential Geom. {\bf 24} (1986), no. 2, pp. 221--263.

\end{thebibliography}
\end{document}